\numberwithin{equation}{section}
\theoremstyle{plain}
\newtheorem{theorem}{Theorem}[section]
\newtheorem{corollary}[theorem]{Corollary}
\newtheorem{lemma}[theorem]{Lemma}
\newtheorem{proposition}[theorem]{Proposition}
\newtheorem{notation}[theorem]{Notation}
\newtheorem{definition}[theorem]{Definition}
\newtheorem{remark}[theorem]{Remark}
\newtheorem{example}[theorem]{Example}
\newcommand{\ot}{\otimes}
\newcommand{\wt}{\widetilde}
\newcommand{\wT}{\wt{T}}
\newcommand{\wV}{\wt{V}}
\begin{document}
	
\title[Cauchy dual and Wold-type decomposition]
{Cauchy dual and Wold-type decomposition for bi-regular covariant representations}

	\date{\today}
	
	\author[Saini]{Dimple Saini}
\address{Centre for Mathematical and Financial Computing, Department of Mathematics, The LNM Institute of Information Technology, Rupa ki Nangal, Post-Sumel, Via-Jamdoli
	Jaipur-302031,
	(Rajasthan) India}
\email{18pmt006@lnmiit.ac.in,  dimple92.saini@gmail.com}

	\begin{abstract}
	The notion of Cauchy dual for left-invertible covariant representations was studied by Trivedi and Veerabathiran. Using the Moore-Penrose inverse, we extend this notion for the covariant representations having closed range and explore several useful properties. We obtain a Wold-type decomposition for {regular} completely bounded covariant representation whose Moore-Penrose inverse is regular. Also, we discuss an example related to the non-commutative bilateral weighted shift. We prove that the Cauchy dual of the concave covariant representation $(\sigma, V)$ modulo $N(\wV)$ is hyponormal modulo $N(\wV)$.
	\end{abstract}

\subjclass{Primary 46L08, 47A15, 47B37; Secondary 47B38, 47L30, 47L55}

\keywords{Covariant representations, wandering subspaces, Moore-Penrose inverse, regular operator, tensor product}

	\maketitle		
		
\section{Introduction}
The fundamental theorem of Wold \cite{W38} says that: Every isometry on a Hilbert space is either a unitary, or a shift, or uniquely decomposes as a direct summand of both. In \cite{AB49}, Beurling proved each closed $M_{z}$-invariant subspace of the Hardy space $H^{2}(\mathbb{D})$ is the range of an inner operator, this result is an application of the Wold decomposition. A generalization of Beurling’s theorem is derived by Halmos \cite{H61} as the wandering subspace theorem for the shift operator. Richter \cite{R88} considered the class of pure concave operators and proved a wandering subspace theorem. The concave operators are also known as $2$-expansive operators where the notion of $n$-expansive operators is due to Aleman \cite{A93} and Athavale \cite{A96}:
\begin{definition}
	For $n\in \mathbb{N},$ a bounded linear operator $V$ on a Hilbert space $\mathcal{H}$ is called {\rm $n$-expansive} if $$\sum_{i=0}^{n} (-1)^i \binom{n}{i} V^{*i}V^i \le 0.$$ 
\end{definition} 
\noindent Shimorin \cite{SS01} gave a Wold-type decomposition for the concave operators and obtained an elementary proof of Richter's wandering subspace theorem. Olofsson \cite{O05} identified a growth condition and extended Richter’s wandering subspace theorem in the following way: 
\begin{theorem}$($Olofsson$)$
	Let $V$ be a bounded linear operator on a Hilbert space $\mathcal{H}$ such that
	\begin{itemize}
		\item [(i)] $V$ is pure expansive;
		\item [(ii)] $V$ satisfies a growth condition that there exist some positive numbers $d, d_{m}$  such that $\sum_{m \geq 2 } \frac{1}{d_{m}} =\infty$ and \begin{equation*}
			\|V^{m}h\|^{2}\leq  d_{m}(\|Vh\|^{2}-\|h\|^{2})+d\|h\|^{2}  , \quad h \in \mathcal{H}.
		\end{equation*} 
	\end{itemize}
	Then $V$ has the wandering subspace property.
\end{theorem}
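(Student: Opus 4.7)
The plan is to follow the Shimorin-type strategy and prove $\mathcal{H} = \bigvee_{n \ge 0} V^n \mathcal{W}$, where $\mathcal{W} := \mathcal{H} \ominus V\mathcal{H}$. Since $V$ is expansive, $V$ is bounded below with closed range, so $V^*V$ is invertible and $L := (V^*V)^{-1} V^*$ is a left inverse of $V$ with $L|_{\mathcal{W}} = 0$. Iterating the orthogonal decomposition $h = P_{\mathcal{W}} h + V L h$ expresses every $h \in \mathcal{H}$ as
\[
h = \sum_{k=0}^{n-1} V^k w_k + V^n h_n, \qquad h_0 = h,\ h_{k+1} = L h_k,\ w_k = P_{\mathcal{W}} h_k,
\]
so the wandering subspace property is equivalent to showing that the residuals $V^n h_n$ tend to $0$ in norm for every $h$.

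To control the residual I would derive an energy identity. Because $w_{k-1} \in \mathcal{W} = (V\mathcal{H})^{\perp}$ is orthogonal to $V h_k$, Pythagoras yields $\|h_{k-1}\|^2 = \|w_{k-1}\|^2 + \|V h_k\|^2$. Writing $a_n := \|h_n\|^2$, $b_n := \|w_n\|^2$, and $\epsilon_n := \|V h_n\|^2 - \|h_n\|^2 \ge 0$ (nonnegative by expansiveness), this becomes $a_{n-1} - a_n = b_{n-1} + \epsilon_n$. Hence $(a_n)$ decreases to some $a_\infty \ge 0$ and both $\sum_n b_n$ and $\sum_n \epsilon_n$ converge. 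The growth condition applied to $h_n$ at exponent $m = n$ then produces the crucial pointwise bound
\[
\|V^n h_n\|^2 \le d_n \epsilon_n + d\, a_n.
\]

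The final and most delicate step is to conclude that the right-hand side of this inequality tends to $0$. The summand $d\, a_n$ is handled by observing that $V^n h_n \in V^n \mathcal{H}$, a decreasing family of closed subspaces whose intersection is $\{0\}$ by purity; this restriction, combined with the monotonicity of $(a_n)$ and the telescoping representation above, is used to force $a_n \to 0$. For the summand $d_n \epsilon_n$ the hypothesis $\sum_{m \ge 2} 1/d_m = \infty$ is decisive: one pairs the pointwise bound with the weights $1/d_n$ and, via an Abel/Cauchy--Schwarz summation argument against the convergent series $\sum_n \epsilon_n$, rules out $\liminf \|V^n h_n\|^2 > 0$. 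The main obstacle sits precisely here --- the summation argument that squeezes decay of the residual from the delicate balance between the summable $(\epsilon_n)$, the divergent $(1/d_m)$, and the purity. This is the technical heart of Olofsson's extension of Shimorin's theorem beyond the concave class, replacing the concavity comparison Shimorin used. Once $\|V^n h_n\| \to 0$ is secured, the telescoping representation converges in norm to $h$, placing $h$ in $\bigvee_{n \ge 0} V^n \mathcal{W}$ and yielding the wandering subspace property.
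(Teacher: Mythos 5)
This theorem is quoted in the paper from Olofsson \cite{O05} without proof, so your sketch has to stand on its own. Your framework is the right one and much of it is sound: expansivity gives a left inverse $L=(V^*V)^{-1}V^*$ with $VL=P_{V\mathcal H}$, the iteration $h=\sum_{k=0}^{n-1}V^kw_k+V^nh_n$ is correct, the Pythagorean identity $a_{n-1}-a_n=b_{n-1}+\epsilon_n$ does give summability of $(\epsilon_n)$, and applying the growth condition to $h_n$ with $m=n$ yields $\|V^nh_n\|^2\le d_n\epsilon_n+d\,a_n$. Note also that since each partial sum lies in $[\mathcal W]_V$, you only need $\liminf_n\|V^nh_n\|=0$ (indeed only a subsequence), and the part you present as delicate is actually trivial: if $d_n\epsilon_n\ge c>0$ eventually, then $\epsilon_n\ge c/d_n$ contradicts $\sum_n\epsilon_n<\infty$ because $\sum_n 1/d_n=\infty$; no Abel or Cauchy--Schwarz summation is needed to get $\liminf_n d_n\epsilon_n=0$.

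The genuine gap is the other term, $d\,a_n$, i.e.\ exactly where purity must enter. You assert that purity ``forces $a_n\to0$,'' but you give no argument, and nothing in your setup supports it: $(a_n)=(\|L^nh\|^2)$ merely decreases to some $a_\infty\ge0$, and purity of $V$ (that is, $\bigcap_nV^n\mathcal H=\{0\}$) does not translate in any direct way into strong convergence of the powers of the left inverse; as it stands your estimate only gives $\mathrm{dist}(h,[\mathcal W]_V)^2\le d\,a_\infty$, which proves nothing if $a_\infty>0$. The standard resolution (Shimorin--Olofsson) does not prove $a_n\to0$ at all. Instead: along a subsequence with $d_{n_k}\epsilon_{n_k}\to0$ the residuals $r_{n_k}=V^{n_k}h_{n_k}$ are norm-bounded (by $d_{n_k}\epsilon_{n_k}+d\,a_0$), so after passing to a further subsequence they converge weakly to some $r$; since $V$ is bounded below, each $V^m\mathcal H$ is a closed, hence weakly closed, subspace containing $r_n$ for all $n\ge m$, so $r\in\bigcap_mV^m\mathcal H=\{0\}$ by purity; finally, for any $g\perp[\mathcal W]_V$ one has $\langle h,g\rangle=\langle r_{n_k},g\rangle\to0$ because the partial sums lie in $[\mathcal W]_V$, whence $h\in[\mathcal W]_V$. (Equivalently, one can argue through the Cauchy dual, as in the framework of the present paper: $[\mathcal E]_V^\perp=\bigcap_n V'^n\mathcal H$, and the growth condition is used to push this intersection into $\bigcap_nV^n\mathcal H=\{0\}$.) So your proposal has the correct skeleton, but the decisive step --- the actual mechanism by which purity is used --- is missing, and the mechanism you name ($a_n\to0$) is not justified and is not the one that works.
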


\noindent Olofsson's wandering subspace theorem extended by Ezzahraoui, Mbekhta, and Zerouali in \cite{EMZ15} where they considered the operators $V$ with reduced minimum modulus greater than or equal to $1$ (this generalizes the notion of expansive operators), see the following definition and theorem for more details:
\begin{definition}
	Let $V$ be a bounded linear operator on a Hilbert space $\mathcal{H}.$
	\begin{itemize}
		\item[(1)] $V$ is called {\rm regular} if range of $V,$ $R(V),$ is closed and $ker(V)\subseteq V^n\mathcal{H}$ for all $n\ge 0;$
		\item[(2)] The {\rm reduced minimum modulus} for $V$ is defined by $$	\gamma({V}):=\begin{cases}
			\inf\{\|Vh\|; \|h\|=1, h\in {N(V)^{\perp}}\}  & \text{if }   V\neq 0 \\
			\infty,  & \text{if }  {V} = 0.
		\end{cases}$$ 
	\end{itemize}
\end{definition}

\begin{theorem}\label{1234}$($Ezzahraoui-Mbekhta-Zerouali$)$
	Let $V$ be a regular operator on a Hilbert space $\mathcal{H},$ and let ${V}^{\dagger}$ be the Moore-Penrose inverse of $V$ (that is, ${V}{V}^{\dagger}{V}={V}, {V}^{\dagger}{V}{V}^{\dagger}={V}^{\dagger}, ({V}{V}^{\dagger})^*={V}{V}^{\dagger},( {V}^{\dagger}{V})^*={V}^{\dagger}{V}$ ) such that \begin{align*}
		\|V^{n}h\|^{2}\leq d_{n}(\|Vh\|^{2}-\|V^{\dagger}Vh\|^{2})+\|V^{\dagger}Vh\|^{2},\:\:\: h \in \mathcal{H}
	\end{align*} with  $\gamma(V)\ge 1$ and $\sum_{ n\geq 2}\frac{1}{d_{n}}=\infty.$ Then $\mathcal{H}= \bigvee_{n\geq0} V^n(\mathcal{E})+\bigcap_{n=0}^{\infty}V^{n}\mathcal{H}$ with $\mathcal{E}=\mathcal{H}\ominus V\mathcal{H}.$
\end{theorem}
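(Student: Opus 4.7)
The plan is to adapt the Olofsson--Shimorin wandering subspace argument to the regular-operator setting, with the Moore-Penrose inverse $V^\dagger$ replacing the left inverse and $\gamma(V) \geq 1$ replacing plain expansivity. Set $M := \bigvee_{n \geq 0} V^n(\mathcal{E})$ and $L := \bigcap_{n \geq 0} V^n\mathcal{H}$. Regularity of $V$ propagates to each $V^n$ (a standard fact), so $V^n\mathcal{H}$ is closed, whence $L$ is closed; $M$ is closed by construction. It suffices to prove $M^\perp \subseteq L$, for then $\mathcal{H} = M \oplus M^\perp \subseteq M + L$, giving the desired identity.

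To establish $M^\perp \subseteq L$, I would fix $h \in M^\perp$ and argue by induction that $h \in V^n\mathcal{H}$ for every $n \geq 0$. The base case $h \in V\mathcal{H}$ is immediate from $h \perp \mathcal{E}=\mathcal{H}\ominus V\mathcal{H}$. For the inductive step, assuming $h = V^n k_n$ with $k_n = (V^n)^\dagger h \in N(V^n)^\perp$, the task is to refine $k_n$ to lie in $V\mathcal{H}$, i.e.\ to show $k_n \perp \mathcal{E}$. Here the quantitative hypotheses enter: the growth inequality $\|V^n h\|^2 \leq d_n(\|Vh\|^2 - \|V^\dagger V h\|^2) + \|V^\dagger V h\|^2$, applied to suitable auxiliary vectors derived from $k_n$ (for instance, the $\mathcal{E}$-component of $k_n$ and its $V$-iterates), together with $\gamma(V) \geq 1$, should yield a telescoping estimate on the norm of that $\mathcal{E}$-component. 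Invoking the divergence $\sum_{n \geq 2} 1/d_n = \infty$ in a standard harmonic-series argument, in the spirit of Olofsson, then forces this component to vanish in the limit, so $k_n \in V\mathcal{H}$ and $h \in V^{n+1}\mathcal{H}$. Since regularity gives $N(V) \subseteq V^n\mathcal{H}$ for all $n$, any kernel contribution is automatically absorbed into $L$, completing the inclusion $M^\perp \subseteq L$.

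The main obstacle I anticipate is calibrating the telescoping estimate so that the Moore-Penrose error terms --- those arising because $VV^\dagger = P_{R(V)}$ and $V^\dagger V = P_{N(V)^\perp}$ are mere projections rather than identities --- do not accumulate and spoil the harmonic-divergence conclusion. In Olofsson's original left-invertible framework one has $V^\dagger V = I$, so such error terms are simply absent; in the present regular setting they must be controlled uniformly, presumably by splitting each iterate into its $N(V)$- and $N(V)^\perp$-components and treating each separately, using $\gamma(V) \geq 1$ to convert Moore-Penrose norm identities into the one-sided estimates needed to drive the induction. Once this technical point is handled, the remainder of the argument should be a fairly routine transcription of the expansive-case proof.
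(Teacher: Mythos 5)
The paper does not actually prove this statement --- it is quoted in the introduction as background from \cite{EMZ15} --- so your proposal has to stand on its own, and as written it is a strategy outline rather than a proof. The skeleton is the right one (reduce to $M^{\perp}\subseteq L$, then show by induction that any $h\perp\bigvee_{m\ge 0}V^{m}(\mathcal{E})$ lies in $V^{n}\mathcal{H}$ for every $n$; the base case and the absorption of kernels via regularity are fine, modulo the standard fact that regularity of $V$ passes to $V^{n}$). But the entire analytic content of the theorem is the step you defer: proving that orthogonality of $h$ to all $V^{m}(\mathcal{E})$ forces the $\mathcal{E}$-component of $k_{n}=(V^{n})^{\dagger}h$ to vanish. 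This does not follow from orthogonality alone, because $V$ is not isometric and inner products do not transfer through $V^{n}$; it is exactly here that one must set up the recursion $k_{n}=P_{\mathcal{E}}k_{n}+Vk_{n+1}+(\text{kernel terms})$, derive one-sided norm inequalities from $\gamma(V)\ge 1$ (which controls only vectors in $N(V)^{\perp}$), apply the growth hypothesis to the vectors $k_{n}$, and exhibit an explicit telescoping sum whose boundedness, played against $\sum_{n\ge 2}1/d_{n}=\infty$, annihilates the wandering components.

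You describe this step only as something that ``should yield a telescoping estimate'' and yourself flag its calibration --- the control of the Moore--Penrose error terms coming from $VV^{\dagger}=P_{R(V)}$ and $V^{\dagger}V=P_{N(V)^{\perp}}$, and of the $N(V^{m})$ contributions --- as the main unresolved obstacle. Since that is precisely where the hypotheses $\gamma(V)\ge 1$, regularity, the specific form of the growth condition, and the divergence of $\sum 1/d_{n}$ are actually consumed, the proposal has a genuine gap: the heart of the argument is asserted, not carried out, and nothing in the sketch verifies that the error terms can in fact be absorbed rather than accumulating and destroying the harmonic-series conclusion.
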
\noindent  Further, in \cite{EMZ21}, Ezzahraoui, Mbekhta, and Zerouali discussed a Wold-type decomposition for regular operators with regular Moore-Penrose inverse. 

The notion of Cauchy dual for left-invertible operators was introduced by Shimorin \cite{SS01}. Ezzahraoui, Mbekhta, and Zerouali \cite{EMZ19} extended the notion of Cauchy dual to operators with closed range. Using the Cauchy dual, Chavan \cite{SC07} proved the following theorem:  
\begin{theorem}$($Chavan$)$
	Suppose that $V$ is a concave operator on a Hilbert space $\mathcal{H}.$ Then the Cauchy dual $V'$ of $V$ is a hyponormal contraction. 
\end{theorem}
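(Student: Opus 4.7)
The plan is to prove the two assertions---that $V'$ is a contraction and that $V'$ is hyponormal---separately, using the concavity inequality $V^{*2}V^2 - 2V^*V + I \leq 0$ throughout. Concavity will first give expansiveness (hence left-invertibility), so that $V' = V(V^*V)^{-1}$ is well defined, and a short computation yields $V'^*V' = (V^*V)^{-1}$ together with $V'V'^* = V(V^*V)^{-2}V^*$.

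For the contraction part I would first establish $V^*V \geq I$ by a scalar argument on iterates. Applying the concavity inequality to $V^{n}h$ gives $\|V^{n+2}h\|^{2} - \|V^{n+1}h\|^{2} \leq \|V^{n+1}h\|^{2} - \|V^{n}h\|^{2}$, so the first differences of $a_{n} := \|V^{n}h\|^{2}$ are non-increasing in $n$. If $\|Vh\|^{2} < \|h\|^{2}$ for some $h$, then $a_{n}\to -\infty$, which is impossible. Hence $V^*V \geq I$, so $(V^*V)^{-1} \leq I$, and $\|V'\|^{2} = \|V'^{*}V'\| = \|(V^*V)^{-1}\| \leq 1$.

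For hyponormality I would set $T = V^*V$ and $B = T - I \geq 0$, and rewrite the concavity inequality as $V^*BV \leq B$ (expand $V^{*2}V^{2} = V^*(B+I)V = V^*BV + B + I$ and compare with $2V^*V - I = 2B + I$). The desired inequality $V'V'^* \leq V'^*V'$ becomes $V(B+I)^{-2}V^* \leq (B+I)^{-1}$; conjugating by $(B+I)^{1/2}$ and setting $X := (B+I)^{-1}V^*(B+I)^{1/2}$, this amounts to $X^*X \leq I$, i.e.\ $\|X\| \leq 1$. Computing the other factor,
\[
XX^* = (B+I)^{-1}\,V^*(B+I)\,V\,(B+I)^{-1} \leq (B+I)^{-1}(2B+I)(B+I)^{-1},
\]
using $V^*(B+I)V = V^*BV + V^*V \leq B + (B+I) = 2B+I$. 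The right-hand side is $\leq I$ by the scalar estimate $(2t+1)/(t+1)^{2} = 1 - t^{2}/(t+1)^{2} \leq 1$ for $t \geq 0$, applied via functional calculus to $B$. Therefore $\|X\|^{2} = \|XX^*\| \leq 1$, which gives $X^*X \leq I$ and finishes the argument.

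I expect the main obstacle to be the hyponormality step rather than the contraction: the concavity hypothesis naturally produces expressions of the form $V^*(\cdot)V$, whereas hyponormality $V'V'^* \leq V'^*V'$ demands a bound on $V(\cdot)V^*$. The conjugation-and-swap trick---passing from $X^*X$ to $XX^*$ via $\|X\|^{2}=\|XX^{*}\|$---is precisely what converts between these two forms, and guessing the right intermediary operator $X$ is the crux.
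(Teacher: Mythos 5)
Your proof is correct. Note that the paper itself never proves Chavan's theorem in this scalar form (it is quoted from \cite{SC07}); the paper's own argument is its generalization to covariant representations, namely Proposition \ref{X5}, Theorem \ref{X4} and Corollary \ref{Y1}. For the contraction half your route and the paper's agree in substance: both iterate the concavity inequality to obtain expansivity (you via the non-increasing differences $\|V^{n+1}h\|^2-\|V^nh\|^2$, the paper via the induction $\|\wV_ny\|^2-\|y\|^2\le n(\|(I_{E^{\ot(n-1)}}\ot\wV)y\|^2-\|y\|^2)$ giving $\gamma(\wV)\ge1$), after which $\|V'\|^2=\|(V^*V)^{-1}\|\le1$. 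For hyponormality the routes genuinely differ. The paper adds the square $\|(I-(I_E\ot\wV^*\wV))\zeta\|^2$ to turn concavity into the norm inequality $\|\wV_2\zeta\|\le\|(I_E\ot\wV^*\wV)\zeta\|$, factors $\wV_2=\wV(I_E\ot\wV'\wV^*\wV)$ to show that $\wT=\wV(I_E\ot\wV')$ is a contraction, and then exploits the identity $\wT^*\wV'\wV^*=(I_E\ot\wV')^*\wV^*$ to get $\|(V')^*k\|\le\|V'k\|$ on $R(\wV^*)$ (which is all of $\mathcal H$ in your left-invertible scalar setting). You instead recast concavity as $V^*BV\le B$ with $B=V^*V-I$, reduce hyponormality to $X^*X\le I$ for $X=(B+I)^{-1}V^*(B+I)^{1/2}$, and bound $XX^*$ by $(B+I)^{-1}(2B+I)(B+I)^{-1}\le I$ via the scalar estimate $(2t+1)/(t+1)^2\le1$ and functional calculus; in fact positivity of $B$ is not even needed there, only invertibility of $V^*V$. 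Your version is a compact, self-contained positivity argument, but it leans on the inverse $(V^*V)^{-1}$ and on commutative functional calculus for $B$, whereas the paper's contraction-plus-identity argument avoids inverses entirely, which is precisely what lets it pass to the closed-range Moore--Penrose setting and yield the ``modulo $N(\wV)$'' conclusions of Theorem \ref{X4}. Both proofs ultimately rest on the same swap from a $V^*(\cdot)V$-type bound to a $V(\cdot)V^*$-type bound: yours through $\|X\|^2=\|XX^*\|$, the paper's through the auxiliary contraction $\wT$.
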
 Cuntz \cite{C77} studied a $C^*$-algebra, known as Cuntz algebra, generated by isometries $V_1,\dots,V_n$  $(n\ge 2)$ on a Hilbert space with  $\sum_{i=1}^{n}V_iV_i^*=I,$ or equivalently, $V_i$'s have orthogonal range. Wold decomposition for two isometries with orthogonal range was first introduced by Frazho \cite{F84}. Popescu \cite{POP89} extended this decomposition to the case of an infinite sequence of isometries with orthogonal final spaces. 

Using the notion of isometric representations of $C^*$-correspondences, Pimsner \cite{P97} generalized the construction of Cuntz algebras. Muhly and Solel \cite{MS99} presented the Wold decomposition for the isometric covariant representations of tensor algebras of $C^*$-correspondences based on \cite{POP89}. Trivedi and Veerabathiran \cite{HS19} proved a Halmos-Richter-type wandering subspace theorem for concave covariant representations of $C^*$-correspondences and introduced the notion of Cauchy dual for left-invertible covariant representations of $C^*$-correspondences. Rohilla, Veerabathiran, and Trivedi \cite{AHS22} extended this decomposition and generalized Theorem \ref{1234} for regular covariant representations having reduced minimum modulus $\gamma({\wV})\ge 1$ that satisfy corresponding growth condition. The main purpose of this paper is to follow this direction along \cite{EMZ21} and to investigate the conditions when the wandering subspace theorem fails, and to analyze the Cauchy dual of concave and bi-regular covariant representations of $C^*$-correspondences.

The section-wise plan is as follows: In Section 2, we recall the definition of {generalized inverse} for completely bounded covariant representation and derive its properties. In Section 3, we obtain the notion of Cauchy dual of covariant representations and a Wold-type decomposition for regular completely bounded covariant representation with regular Moore-Penrose inverse. In Section 4, we study the powers of the Moore-Penrose inverse of completely bounded covariant representations. In Section 5, we discuss the Cauchy dual of concave completely bounded covariant representations based on \cite{EMZ19}.

\subsection{Preliminaries and Notations}
Here, we will review some basic concepts from \cite{L95, MS98, MS99, P97}.	
Suppose that $E$ is a Hilbert $C^*$-module over a $C^*$-algebra $\mathcal{B}.$ It is called $C^*$-correspondence over $\mathcal{B}$ if $E$ has a left $\mathcal{B}$-module structure by a non-zero $*$-homomorphism $\phi:\mathcal B\to \mathcal L(E),$ that is, $ b\xi :=\phi(b)\xi$ for all $b\in\mathcal B$ and $\xi\in E,$ where $\mathcal L(E)$ is the collection of all adjointable operators on $E.$ In this paper, each $*$-homomorphism considered is nondegenerate. Throughout this paper, we will use the following notations: $\mathcal{H}$ for a Hilbert space, $E$ for a $C^*$-correspondence over $\mathcal{B}$ and $B(\mathcal{H})$ for the algebra of bounded linear operators on $\mathcal{H}.$ For $V\in B(\mathcal{H}),$ we denote by $R(V)$ and $N(V)$ the range of $V$ and the kernel of $V,$ respectively. The following notion of completely bounded covariant representations plays an important role in this paper:

\begin{definition}
	Let $V:E\to B(\mathcal H)$ be a linear map and $\sigma:\mathcal B\to B(\mathcal H)$ be a representation. Then the pair $(\sigma,V)$ is
	called a {\rm  covariant representation} (cf. \cite{MS98}) of $E$ on $\mathcal H$  if 
	\[
	V(b\xi c)=\sigma(b)V(\xi)\sigma(c) \quad \quad (\xi\in E,
	b,c\in\mathcal B).
	\] 
	We say that $(\sigma,V)$ is {\rm completely bounded covariant representation} (simply say, {\rm c.b.c. representation})  if $V$ is completely bounded. Further, $(\sigma, V)$ is called {\rm isometric} if $V(\xi)^*V(\eta)=\sigma(\langle \xi,\eta \rangle)$ for all $\xi,\eta\in E.$
\end{definition}

The following result is due to Muhly and Solel \cite{MS98} which is useful to classify the c.b.c. representations of a $C^*$-correspondence:

\begin{lemma} \label{MSC}
	The function $(\sigma, V)\mapsto \wV$ provides a bijection between the set of all c.b.c. (respectively, completely contractive covariant) representations $(\sigma, V)$ of $E$ on $\mathcal H$ and the set of all bounded (respectively,
	contractive) linear maps $\widetilde{V}:E\otimes_{\sigma} \mathcal H\to \mathcal
	H$ defined by
	\[
	\widetilde{V}(\eta\otimes h)=V(\eta)h \quad \quad (
	h\in\mathcal H,\eta\in E),
	\] such that $\sigma(b)\widetilde{V}=\widetilde{V}(\phi(b)\otimes I_{\mathcal
		H})$ for all $b\in\mathcal B$. Moreover, $\wV$ is {\rm isometry} if and only if $(\sigma, V)$ is isometric.
\end{lemma}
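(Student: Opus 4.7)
The plan is to establish the bijection by constructing the two directions and identifying the norms. Starting from a c.b.c.\ representation $(\sigma, V)$, I would first define $\wV$ on the algebraic tensor product $E \odot \mathcal H$ by $\wV(\eta \otimes h) := V(\eta) h$, extended bilinearly. To descend to the balanced tensor product $E \otimes_\sigma \mathcal H$, it suffices to check that $\eta b \otimes h - \eta \otimes \sigma(b) h \mapsto 0$, which follows from the right-module covariance $V(\eta b) = V(\eta)\sigma(b)$ applied to $h$.

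Boundedness of $\wV$ is the main technical step, requiring complete (not just ordinary) boundedness of $V$. For a finite sum $\sum_{i=1}^n \eta_i \otimes h_i$, setting $H = (h_1,\ldots,h_n)^T \in \mathcal H^n$ gives
\[
\sum_i V(\eta_i) h_i = [V(\eta_1),\ldots,V(\eta_n)]\, H = V^{(1,n)}\bigl([\eta_1,\ldots,\eta_n]\bigr)\, H,
\]
where $V^{(1,n)}$ denotes the amplification to $1\times n$ rows. The norm of the right-hand side is bounded by $\|V\|_{\mathrm{cb}}\, \|[\eta_i]\|_{M_{1,n}(E)}\, \|H\|$. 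On the other hand, the inner product on $E \otimes_\sigma \mathcal H$ gives
\[
\Bigl\|\sum_i \eta_i \otimes h_i\Bigr\|^2 = \bigl\langle H,\, \sigma^{(n)}([\langle \eta_i,\eta_j\rangle])\, H\bigr\rangle,
\]
and the Gramian $[\langle \eta_i,\eta_j\rangle]$ is exactly the square of the canonical column norm of $[\eta_i]$ in $M_{n,1}(E)$; matching this with the row norm via the involution on $M_n$ yields $\|\wV(\sum_i \eta_i\otimes h_i)\| \le \|V\|_{\mathrm{cb}}\, \|\sum_i \eta_i \otimes h_i\|$, so $\wV$ extends boundedly to the completion. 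The intertwining relation then follows in one line: $\sigma(b)\wV(\eta \otimes h) = \sigma(b) V(\eta) h = V(\phi(b)\eta) h = \wV\bigl((\phi(b)\otimes I_{\mathcal H})(\eta\otimes h)\bigr)$.

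Conversely, given a bounded $\wV: E \otimes_\sigma \mathcal H \to \mathcal H$ with $\sigma(b)\wV = \wV(\phi(b)\otimes I_{\mathcal H})$, I would define $V(\eta) h := \wV(\eta \otimes h)$. Linearity is clear; the right-module covariance $V(\eta c) h = V(\eta)\sigma(c) h$ follows from well-definedness of the balanced tensor product, and the left-module covariance $V(b\eta)h = \sigma(b)V(\eta)h$ from the intertwining hypothesis. Reversing the matrix calculation from the previous paragraph gives $\|V\|_{\mathrm{cb}} \le \|\wV\|$, so the two assignments are mutual inverses with $\|V\|_{\mathrm{cb}} = \|\wV\|$, which simultaneously settles the contractive equivalence. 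For the isometric case, compare
\[
\langle \wV(\eta\otimes h), \wV(\xi\otimes k)\rangle = \langle h, V(\eta)^* V(\xi) k\rangle
\]
with $\langle \eta\otimes h, \xi\otimes k\rangle = \langle h, \sigma(\langle \eta,\xi\rangle) k\rangle$; by polarisation, $\wV$ is an isometry if and only if $V(\eta)^* V(\xi) = \sigma(\langle \eta,\xi\rangle)$ for all $\eta,\xi\in E$. The main obstacle is the norm identification $\|\wV\| = \|V\|_{\mathrm{cb}}$: it hinges on correctly matching the canonical operator-space structure of $E$ (through the Hilbert module Gramian) with the inner product on $E\otimes_\sigma \mathcal H$, and it is precisely here that the complete (as opposed to plain) boundedness of $V$ is essential.
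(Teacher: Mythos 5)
First, note that the paper does not prove this lemma at all: it is quoted as a known result of Muhly and Solel \cite{MS98} (their Lemma 3.5), so your attempt can only be judged on its own terms. Your overall architecture is the standard one, and several pieces are fine: the well-definedness over the balanced relations via $V(\eta b)h=V(\eta)\sigma(b)h$, the intertwining computation, the converse direction (recovering $V$ from $\wV$ and the estimate $\|V\|_{\mathrm{cb}}\le\|\wV\|$), and the isometry statement via comparison of inner products on elementary tensors.

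However, the central boundedness step has a genuine gap. Writing $R=[\eta_1,\dots,\eta_n]$, $H=(h_1,\dots,h_n)^T$ and $G=[\langle\eta_i,\eta_j\rangle]$, your estimate $\|V^{(1,n)}(R)H\|\le\|V\|_{\mathrm{cb}}\,\|R\|_{M_{1,n}(E)}\,\|H\|$ only yields $\|\wV u\|\le\|V\|_{\mathrm{cb}}\,\|G\|^{1/2}\,\|H\|$, whereas what is needed is $\|\wV u\|\le\|V\|_{\mathrm{cb}}\,\|u\|$ with $\|u\|^{2}=\langle H,\sigma^{(n)}(G)H\rangle$, i.e. $\|u\|=\|\sigma^{(n)}(G)^{1/2}H\|$. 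These can differ drastically (already for $n=1$: take $h$ of norm one with $\sigma(\langle\eta,\eta\rangle)h$ tiny), so your chain of inequalities neither shows that $\wV$ descends to the separated completion $E\otimes_{\sigma}\mathcal H$ nor gives $\|\wV\|\le\|V\|_{\mathrm{cb}}$; no "matching of row and column norms via the involution" can bridge this, and in fact your norm identification is also off ($G=R^{*}R$, so $\|G\|^{1/2}$ is the \emph{row} norm of $[\eta_i]$, while the column norm squared is $\|\sum_i\langle\eta_i,\eta_i\rangle\|$). The missing idea is the factorization trick: for $\epsilon>0$ put $G_\epsilon=G+\epsilon I_n$ and $\zeta_j=\sum_i\eta_i\,(G_\epsilon^{-1/2})_{ij}$, so that the row $Z=[\zeta_1,\dots,\zeta_n]$ satisfies $Z^{*}Z=G_\epsilon^{-1/2}GG_\epsilon^{-1/2}\le I_n$, hence $\|Z\|_{M_{1,n}(E)}\le1$, and $R=ZG_\epsilon^{1/2}$. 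Using the right covariance $V(\zeta b)=V(\zeta)\sigma(b)$ one gets $\sum_iV(\eta_i)h_i=V^{(1,n)}(Z)\,\sigma^{(n)}(G_\epsilon^{1/2})H$, whence $\|\wV u\|\le\|V\|_{\mathrm{cb}}\bigl(\|u\|^{2}+\epsilon\|H\|^{2}\bigr)^{1/2}$, and letting $\epsilon\to0$ gives the required bound (and, with your converse inequality, $\|\wV\|=\|V\|_{\mathrm{cb}}$, which also settles the contractive case). With this step supplied, your argument becomes a complete proof along the lines of \cite{MS98}.
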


Let $(\sigma , V)$ and $(\psi, T)$ be two c.b.c. representations of ${E}$ on the Hilbert spaces $\mathcal{H}$ and $\mathcal{K},$ respectively. We say that $(\sigma , V)$ is {\it isomorphic} to $(\psi, T)$ if there exists a unitary $U:\mathcal H \to \mathcal K$ such that   $U\sigma(b)=\psi(b)U$ and $U\wV=\wT(I_{E}\ot U)$ for all $b\in \mathcal{B}.$

\begin{definition}	
	Let $(\sigma,V)$  be a  c.b.c. representation of ${E}$ on $\mathcal{H}.$ A closed  subspace $\mathcal{K}$ of $\mathcal{H}$ is called  $(\sigma,V)$-{\rm invariant} $(resp. (\sigma,V)$-{\rm reducing}) (cf. \cite{HS19}) if it is   $\sigma(\mathcal{B})$-invariant and (resp. both $\mathcal{K},\mathcal{K}^{\perp}$) is invariant by each operator  $V(\xi)$ for all $\xi \in E.$ 
\end{definition}
For each $n\in \mathbb{N}_0(=\mathbb{N}\cup \{0\})$,
$E^{\otimes n} =E\otimes_{\phi} \dots \otimes_{\phi}E$ ($n$-times) (here $E^{\otimes 0} =\mathcal{B}$) is a $C^*$-correspondence over $\mathcal{B}$, with the left module action  of $\mathcal{B}$ on $E^{\otimes n}$ defined as  $$\phi_n(b)(\xi_1 \otimes \dots \otimes \xi_n)=b\xi_1\otimes \dots \otimes\xi_n, \: \:\:\: b\in \mathcal{B},\xi_i \in E.$$
For  $n\in \mathbb{N},$ define $\wV_n : E^{\ot n}\ot \mathcal{H} \to \mathcal{H}$ by $$\wV_n (\xi_1 \ot \dots \ot \xi_n \ot h) = V (\xi_1) \dots V(\xi_n) h, \quad \xi_i \in E, h \in \mathcal H.$$ 
The \emph{Fock module} of $E$ (cf. \cite{F02}), $\mathcal{F}(E)= \bigoplus_{n \geq 0}E^{\otimes n},$ is a $C^*$-correspondence over $\mathcal{B},$ where the left module action  of $\mathcal{B}$ on $\mathcal{F}(E)$ is defined  by $$\phi_{\infty}(b)\left(\oplus_{n \geq 0}\xi_n\right)=\oplus_{n \geq 0}\phi_n(b)\xi_n , \:\: \xi_n \in E^{\otimes n}.$$
For $\xi \in E,$  the \emph{creation operator} $V_{\xi}$ on $\mathcal{F}(E)$ is defined by $$V_{\xi}(\eta)=\xi \otimes \eta, \:\: \eta \in E^{\otimes n}, n\ge 0.$$ 

\begin{theorem} $($Pimsner \cite{P97}$)$
	Let $\mathcal{T}{(E)}$ be a $C^*$-algebra in $\mathcal{L}(\mathcal{F}(E)),$ called {\rm Toeplitz algebra}, generated by $\{V_{\xi}\}_{\xi\in E}$ and $\{\phi_{\infty}(b)\}_{b\in \mathcal{B}}.$ 
	Let $(\sigma,V)$ be an isometric covariant representation of $E$ on $\mathcal H.$ Then the map $$\begin{cases}
		V_{\xi}\mapsto V(\xi), &   \xi\in E; \\
		\phi_{\infty}(b)\mapsto \sigma(b), &   b\in \mathcal{B}
	\end{cases}$$ extends uniquely to a $C^*$-representation of $\mathcal{T}(E)$ on H. Conversely, suppose that $\pi: \mathcal{T}(E)\mapsto B(\mathcal{H})$ is a $C^*$-representation, and if $V(\xi)$ is defined to be $\pi(V_{\xi})$ for $\xi\in E,$ and $\sigma(b)$ is defined to be $\pi(\phi_{\infty}(b))$ for $b\in \mathcal{B},$ then $(\sigma,V)$ is an isometric covariant representation of $E$ on $\mathcal{H}.$
\end{theorem}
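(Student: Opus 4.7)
The plan is to prove the two directions separately. The converse is routine bookkeeping, while the forward direction is the substantive one, requiring the extension of a map defined only on generators to the full $C^*$-algebra.

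For the converse, given a $C^*$-representation $\pi : \mathcal{T}(E) \to B(\mathcal{H})$, I would set $V(\xi) := \pi(V_\xi)$ and $\sigma(b) := \pi(\phi_\infty(b))$ and verify the axioms directly. Linearity of $V$ in $\xi$, the bimodule covariance $V(b\xi c) = \sigma(b) V(\xi) \sigma(c)$, and the multiplicativity and $*$-preservation of $\sigma$ are all immediate from the corresponding identities on the Fock module together with $\pi$ being a $*$-homomorphism. The isometric relation $V(\xi)^* V(\eta) = \sigma(\langle \xi, \eta \rangle)$ reduces, via $\pi$, to the central computation
\[
V_\xi^* V_\eta(\zeta) = V_\xi^*(\eta \otimes \zeta) = \phi_\infty(\langle \xi, \eta \rangle)\, \zeta, \qquad \zeta \in E^{\otimes n},
\]
which follows from the definition of the interior tensor product inner product on $\mathcal{F}(E)$ and a standard adjoint computation.

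For the forward direction, given isometric $(\sigma, V)$, I would first set $\pi_0(V_\xi) := V(\xi)$ and $\pi_0(\phi_\infty(b)) := \sigma(b)$ on the generators. Every element of the dense $*$-algebra $\mathcal{P} \subset \mathcal{T}(E)$ generated by $\{V_\xi\}_{\xi \in E} \cup \{\phi_\infty(b)\}_{b \in \mathcal{B}}$ is a finite sum of monomials; iterating the relations $V_\xi^* V_\eta = \phi_\infty(\langle \xi, \eta\rangle)$, $\phi_\infty(b) V_\xi = V_{b\xi}$, and $V_\xi \phi_\infty(b) = V_{\xi b}$, one can push every adjoint to the right and bring each monomial into a normal form. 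Since the isometric hypothesis on $(\sigma, V)$ guarantees that $V(\xi)$ and $\sigma(b)$ satisfy precisely the same relations, the definition on generators descends to a well-defined $*$-homomorphism $\pi_0 : \mathcal{P} \to B(\mathcal{H})$.

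The main obstacle is extending $\pi_0$ continuously to the $C^*$-completion, i.e., showing $\|\pi_0(a)\| \leq \|a\|_{\mathcal{T}(E)}$ for $a \in \mathcal{P}$. I would handle this via the gauge action on $\mathcal{T}(E)$: there is a strongly continuous $\mathbb{T}$-action $\beta$ with $\beta_z(V_\xi) = z V_\xi$ and $\beta_z(\phi_\infty(b)) = \phi_\infty(b)$, averaging to a faithful conditional expectation $\Phi$ onto the fixed-point subalgebra. An analogous gauge action exists on $C^*(\sigma(\mathcal{B}), V(E))$, induced by the natural $\mathbb{N}$-grading of $\mathcal{P}$, yielding a conditional expectation $\Phi'$ compatible with $\pi_0$ via $\Phi' \circ \pi_0 = \pi_0 \circ \Phi$. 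Faithfulness of $\Phi$ then reduces the required norm bound on arbitrary elements of $\mathcal{P}$ to one on the fixed-point algebra, where the monomial structure is entirely controlled by $\sigma$ and the $E^{\otimes n}$ inner products and can be bounded directly. Uniqueness of the extension is automatic, since $\mathcal{T}(E)$ is generated by the images of $V_\xi$ and $\phi_\infty(b)$.
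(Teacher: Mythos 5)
Your converse direction is fine and is the routine verification; note also that the paper itself gives no proof of this theorem (it is quoted from Pimsner), so the comparison is with the standard argument. The forward direction, however, has two genuine gaps. First, well-definedness of $\pi_0$ on the dense polynomial algebra $\mathcal{P}\subseteq\mathcal{L}(\mathcal{F}(E))$ cannot be obtained by saying that $V(\xi)$ and $\sigma(b)$ ``satisfy precisely the same relations'': the elements of $\mathcal{P}$ are concrete operators on the Fock module, and a priori a polynomial in $V_{\xi},V_{\xi}^{*},\phi_{\infty}(b)$ could vanish on $\mathcal{F}(E)$ without the corresponding polynomial in $V(\xi),V(\xi)^{*},\sigma(b)$ vanishing, i.e.\ the concrete generators could satisfy relations that are not formal consequences of the Toeplitz relations. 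That this does not happen is exactly the content of the theorem (universality of the Fock representation), so this step assumes what is to be proved.

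Second, your continuity argument relies on ``an analogous gauge action on $C^{*}(\sigma(\mathcal{B}),V(E))$ induced by the natural $\mathbb{N}$-grading of $\mathcal{P}$,'' and such an action need not exist. Take $\mathcal{B}=E=\mathbb{C}$, so that an isometric covariant representation is a single isometry $V$; choosing $V=I_{\mathcal{H}}$ gives $C^{*}(\sigma(\mathcal{B}),V(E))=\mathbb{C}I$, and no automorphism can send $V\mapsto zV$ for $z\neq 1$, even though the conclusion of the theorem (here Coburn's theorem) still holds. The gauge action and the faithful conditional expectation live on the concrete (equivalently, universal) Toeplitz algebra, not on an arbitrary image algebra; moreover, faithfulness of the expectation on the source is the tool for proving injectivity of a map defined \emph{out of} $\mathcal{T}(E)$ once injectivity on the fixed-point cores is known --- it does not produce the bound $\|\pi_0(a)\|\le\|a\|$ for a map whose existence is still in question. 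The standard route runs in the opposite direction from yours: construct the universal Toeplitz $C^{*}$-algebra (e.g.\ via a direct sum over all isometric covariant representations), observe that the Fock representation is gauge-equivariant and admits a faithful expectation onto its fixed-point algebra, prove injectivity on the cores $\overline{\mathrm{span}}\{V_{\xi_1}\cdots V_{\xi_n}V_{\eta_n}^{*}\cdots V_{\eta_1}^{*}\}$, conclude that the Fock representation is faithful, and then obtain the map of the theorem by inverting it and composing with the representation determined by $(\sigma,V)$; Pimsner's own proof reaches the same end through a dilation-type analysis of isometric representations. As written, your sketch is incomplete at precisely the two points where the real work lies.
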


Now, we recall the following definition of regular c.b.c. representation from \cite{AHS22} which extends the notion of isometric covariant representations: 

\begin{definition}
	Let $(\sigma,V)$ be a c.b.c. representation of ${E}$ on $\mathcal H.$ We say that $(\sigma,V)$ is {\rm regular} if $N(\widetilde{V})\subseteq {E} \otimes{ {{R}^{\infty}{({\wV})}}}$ and its range $R(\widetilde{V})$ is closed. Here ${ {{R}^{\infty}{({\wV})}}}:=\bigcap_{n \geq 0} { {{R}{(\widetilde{V}_n)}}}$ is the {\rm generalized range} of $(\sigma,V).$
\end{definition}

Suppose that $(\sigma,V)$ is a left-invertible c.b.c. representation of $E$ on $\mathcal{H}$ (that is, $(\wV^*\wV)^{-1}\wV^*$ is a left inverse of $\wV$). Then $R(\widetilde{V})$ is closed and $N(\wV)=\{0\},$ and hence $(\sigma,V)$ is regular. Therefore, every left-invertible c.b.c. representations are regular.  

The following result is from \cite[Theorem 2.2]{AHS22} which characterizes the regular c.b.c. representations:
\begin{theorem}\label{cc}
	Let $(\sigma,V)$ be a c.b.c. representation of ${E}$ on $\mathcal H.$ Then, for each  $ m, n \in \mathbb{N} $, the following statements are equivalent:
	\begin{enumerate}
		\item $ N(\widetilde{V}) \subseteq (I_{E} \otimes \widetilde{V}_m)(E^{\otimes(m+1) }\otimes \mathcal{H})$  ;
		\item $ N({\widetilde{V}}_n) \subseteq (I_{E^{\otimes n}} \otimes \widetilde{V})(E^{\otimes(n+1) }\otimes \mathcal{H})$ ;
		\item $ N({\widetilde{V}}_n) \subseteq(I_{E^{\otimes n}} \otimes \widetilde{V}_m)(E^{\otimes(n+m) }\otimes \mathcal{H})$  ;
		\item $N(\widetilde{V}_{n}) = (I_{E^{\otimes n}} \otimes \widetilde{V}_m)N (\widetilde{V}_{m+n})$.
	\end{enumerate}
	
\end{theorem}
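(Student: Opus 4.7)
The plan hinges on the factorization $\wV_{n+m} = \wV_n\circ (I_{E^{\otimes n}}\otimes \wV_m)$, which is immediate from $\wV_k(\xi_1\otimes\cdots\otimes\xi_k\otimes h) = V(\xi_1)\cdots V(\xi_k)h$. I will run the cycle $(1)\Rightarrow(2)\Rightarrow(3)\Leftrightarrow(4)\Rightarrow(1)$, with $(3)\Rightarrow(1)$ and $(3)\Rightarrow(2)$ being the trivial specializations $n=1$ and $m=1$. The step $(3)\Leftrightarrow(4)$ is essentially automatic from the factorization: $(I_{E^{\otimes n}}\otimes \wV_m)N(\wV_{n+m})\subseteq N(\wV_n)$ holds because $\wV_n\circ(I_{E^{\otimes n}}\otimes \wV_m) = \wV_{n+m}$; and if (3) holds and $y\in N(\wV_n)$, writing $y=(I_{E^{\otimes n}}\otimes\wV_m)z$ gives $\wV_{n+m}z = \wV_n y = 0$, so $z\in N(\wV_{n+m})$ and the opposite inclusion follows.

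For $(2)\Rightarrow(3)$, I would induct on $m$: the base $m=1$ is (2). For the inductive step, take $y\in N(\wV_n)$; the inductive hypothesis together with the already-established $(3)\Leftrightarrow(4)$ yields $y = (I_{E^{\otimes n}}\otimes \wV_m) z$ with $z\in N(\wV_{n+m})$; then (2) applied at index $n+m$ gives $z = (I_{E^{\otimes(n+m)}}\otimes \wV) w$, and the factorization collapses $(I_{E^{\otimes n}}\otimes \wV_m)\circ (I_{E^{\otimes (n+m)}}\otimes \wV)$ into $I_{E^{\otimes n}}\otimes \wV_{m+1}$, so $y = (I_{E^{\otimes n}}\otimes \wV_{m+1})w$.

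For $(1)\Rightarrow(2)$, I would induct on $n$: base $n=1$ is (1) with $m=1$. For the inductive step, $x\in N(\wV_{n+1})$ forces $(I_E\otimes \wV_n)x\in N(\wV)$; applying (1) with $m=n+1$ together with the factorization $\wV_{n+1} = \wV_n\circ(I_{E^{\otimes n}}\otimes \wV)$ produces $u\in R(I_{E^{\otimes(n+1)}}\otimes \wV)$ satisfying $(I_E\otimes \wV_n)(x-u)=0$; hence $x-u\in N(I_E\otimes \wV_n)$, and the inductive hypothesis pushes $x-u$ into $E\otimes R(I_{E^{\otimes n}}\otimes \wV) = R(I_{E^{\otimes(n+1)}}\otimes \wV)$, giving $x\in R(I_{E^{\otimes(n+1)}}\otimes \wV)$ as needed.

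The main obstacle I anticipate is not in the above scaffolding but in justifying, inside the $C^*$-correspondence framework, the two identifications used in the final induction: the kernel identity $N(I_E\otimes \wV_n) = E\otimes N(\wV_n)$ and the range compatibility $R(I_{E^{\otimes(n+1)}}\otimes \wV) = E\otimes R(I_{E^{\otimes n}}\otimes \wV)$. Since the interior tensor product $E\otimes_\sigma \mathcal{H}$ is the Hilbert-space quotient of the algebraic tensor product by the relations $\xi b\otimes h = \xi\otimes \sigma(b)h$, these have to be established by decomposing the fiber orthogonally and exploiting the intertwining property of $\wV_n$ with respect to $\phi_n(\cdot)\otimes I$ and $\sigma(\cdot)$, so that the orthogonal decomposition lifts functorially through $I_E\otimes(\cdot)$. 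Once this piece of bookkeeping is in hand, the rest of the proof is purely formal manipulation of the factorization.
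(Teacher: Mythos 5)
First, a point of reference: the paper does not prove this theorem at all --- it is quoted verbatim from \cite[Theorem 2.2]{AHS22} --- so your proposal has to be judged on its own merits rather than against an argument in this text. Your scaffolding is sound: the factorization $\wV_{n+m}=\wV_n(I_{E^{\ot n}}\ot\wV_m)$, the equivalence $(3)\Leftrightarrow(4)$, the induction proving $(2)\Rightarrow(3)$, and the specializations $(3)\Rightarrow(1),(2)$ are all correct. Do note, however, that your argument silently reads each item as quantified over its indices (you invoke (1) at $m=1$ and again at $m=n+1$, and (2) at index $n+m$); this is the only tenable reading, since for a single fixed pair $(m,n)$ the equivalence is false already in the scalar case $E=\mathcal B=\mathbb C$, where a c.b.c. representation is just one operator $T$: taking $Te_1=0$, $Te_2=e_1$, $Te_k=e_{k+1}$ $(k\ge 3)$ on $\ell^2$ gives $N(T)\subseteq R(T)$ while $N(T^2)\not\subseteq R(T)$. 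You should make this quantifier convention explicit.

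The genuine gap is in your step $(1)\Rightarrow(2)$. The kernel identification you defer is available only with a closure: $N(I_E\ot\wV_n)=\overline{\operatorname{span}}\{\xi\ot z:\ \xi\in E,\ z\in N(\wV_n)\}$ (one can prove this cheaply: the inclusion $\supseteq$ is clear, and if $\zeta\in N(I_E\ot\wV_n)$ is orthogonal to that span, then $\langle\zeta,\xi\ot\wV_n^*h\rangle=\langle(I_E\ot\wV_n)\zeta,\xi\ot h\rangle=0$ as well, so $\zeta$ is orthogonal to all $\xi\ot w$ with $w\in N(\wV_n)+R(\wV_n^*)$, a set with dense span, whence $\zeta=0$). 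Consequently, from $x-u\in N(I_E\ot\wV_n)$ and the inductive hypothesis $N(\wV_n)\subseteq R(I_{E^{\ot n}}\ot\wV)$ you only conclude that $x-u$ is a \emph{limit} of finite sums of vectors lying in $R(I_{E^{\ot(n+1)}}\ot\wV)$, i.e.\ $x-u\in\overline{R(I_{E^{\ot(n+1)}}\ot\wV)}$; since the statement carries no closed-range hypothesis, this does not give $x-u\in R(I_{E^{\ot(n+1)}}\ot\wV)$, which is what (2) asserts. Relatedly, the ``range compatibility'' $R(I_{E^{\ot(n+1)}}\ot\wV)=E\ot R(I_{E^{\ot n}}\ot\wV)$ is not correct as an equality of non-closed sets; only the inclusion of the algebraic span of such elementary tensors into the range is available. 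This is precisely where the module setting differs from the single-operator model you are imitating: there $x-u$ lies literally in $N(T^n)$ and no limit is taken. The gap closes if one adds the hypothesis that $R(\wV)$ is closed (the only situation in which this paper ever applies the theorem): then $\wV$ is bounded below on $N(\wV)^{\perp}$, this lower bound passes to $I_{E^{\ot k}}\ot\wV$ on the orthogonal complement of its kernel because $\wV^*\wV-c^2$ is a positive operator commuting with the left action, so each $R(I_{E^{\ot k}}\ot\wV)$ is closed and your limit argument becomes legitimate. As written, though, either supply an argument that avoids passing to limits out of a possibly non-closed range, or state and use the closed-range assumption.
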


\section{Properties of generalized inverse of regular covariant representations}
In this section, we will discuss the definition of {\it generalized inverse} and derive some natural properties of the generalized inverse. Also, we study the necessary and sufficient conditions for regular covariant representations.

\begin{definition}
	Let $(\sigma,V)$ be a c.b.c. representation of ${E}$ on $\mathcal H.$ A bounded operator ${S}:\mathcal{H} \to E\otimes\mathcal{H}$ is called {\rm generalized inverse} of $\wV$ if ${S}\widetilde{V}{S}={S}$ and $\widetilde{V}{S}\widetilde{V}$=$\widetilde{V}$. For every $n\in \mathbb{N},$ define 
	\begin{align*}
		S^{(n)}= (I_{E^{\otimes n-1}}\ot S)(I_{E^{\otimes n-2}}\ot S)\dots(I_{E}\ot S)S.
	\end{align*}
\end{definition}
The next proposition gives the connection between generalized inverse and generalized range of a regular c.b.c representation.

\begin{proposition}\label{d3}
	Let $(\sigma,V)$ be a regular c.b.c. representation of ${E}$ on $\mathcal H,$  and let $S$ be a generalized inverse of $\wV$. Then $(I_E \ot S){N(\wV)} \subseteq E^{\ot 2} \ot { {{R}^{\infty}{(\widetilde{V})}}} .$
\end{proposition}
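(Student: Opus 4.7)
The plan is to reduce the desired inclusion to the single claim
\[ S\bigl(R^\infty(\wV)\bigr) \subseteq E \otimes R^\infty(\wV). \]
Granting this, regularity of $(\sigma,V)$ places $N(\wV)$ inside $E \otimes R^\infty(\wV)$; on an elementary tensor $\xi \otimes h$ with $h \in R^\infty(\wV)$ we have $(I_E \otimes S)(\xi \otimes h) = \xi \otimes Sh \in E^{\otimes 2} \otimes R^\infty(\wV)$, and since $I_E \otimes S$ is bounded and $E^{\otimes 2} \otimes R^\infty(\wV)$ is closed, the inclusion propagates to all of $E \otimes R^\infty(\wV)$ and therefore to $N(\wV)$, as required.

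To prove the claim, fix $h \in R^\infty(\wV) = \bigcap_{n \ge 0} R(\wV_n)$. For every $k \ge 1$, choose $z_k \in E^{\otimes(k+1)} \otimes \mathcal H$ with $\wV_{k+1} z_k = h$, and set $u_k := (I_E \otimes \wV_k) z_k \in E \otimes R(\wV_k)$; using the factorization $\wV_{k+1} = \wV \circ (I_E \otimes \wV_k)$ one sees $\wV u_k = h$. Invoking the generalized-inverse identity $\wV S \wV = \wV$,
\[ \wV(Sh - u_k) \;=\; \wV S \wV u_k - \wV u_k \;=\; 0, \]
so $Sh - u_k \in N(\wV)$. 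By regularity, $N(\wV) \subseteq E \otimes R^\infty(\wV) \subseteq E \otimes R(\wV_k)$; adding $u_k$ back yields $Sh \in E \otimes R(\wV_k)$ for every $k \ge 1$, hence $Sh \in \bigcap_{k \ge 1}\bigl(E \otimes R(\wV_k)\bigr)$.

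To finish, I need to check $\bigcap_k \bigl(E \otimes R(\wV_k)\bigr) \subseteq E \otimes R^\infty(\wV)$. Each $R(\wV_k)$ is $\sigma$-invariant, via $\sigma(b)\wV_k = \wV_k(\phi_k(b) \otimes I_{\mathcal H})$, and closed as a standard consequence of regularity (cf.\ \cite{AHS22} and the equivalences in Theorem \ref{cc}). So the orthogonal projections $P_k$ onto $R(\wV_k)$ form a decreasing net that converges strongly to the projection $P_\infty$ onto $R^\infty(\wV)$; consequently $I_E \otimes P_k \to I_E \otimes P_\infty$ strongly, and any vector fixed by every $I_E \otimes P_k$ is fixed by $I_E \otimes P_\infty$, putting it in $E \otimes R^\infty(\wV)$. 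This commutation of intersection with tensor product is the only non-formal ingredient and the main obstacle; everything else is algebra from $\wV S \wV = \wV$ and the definition of the generalized range.
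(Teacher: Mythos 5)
Your proposal is correct, but it takes a genuinely different route from the paper. The paper's proof is a two--line computation at the level of kernels: since $N(\wV)\subseteq E\otimes R(\wV)$, write $\eta=(I_E\otimes\wV)u$ and use $\wV_2=\wV(I_E\otimes\wV)$ together with $\wV S\wV=\wV$ to get $\wV_2(I_E\otimes S)\eta=\wV(I_E\otimes\wV S\wV)u=\wV\eta=0$; hence $(I_E\otimes S)N(\wV)\subseteq N(\wV_2)$, and regularity (via Theorem \ref{cc} and \cite{AHS22}) gives $N(\wV_2)\subseteq E^{\otimes 2}\otimes R^{\infty}(\wV)$. You instead establish the invariance $S\bigl(R^{\infty}(\wV)\bigr)\subseteq E\otimes R^{\infty}(\wV)$ --- which is precisely item (iii) of Proposition \ref{d2}, quoted in the paper from \cite{AHS22}, so you could simply have cited it --- and then tensor with $I_E$ and use $N(\wV)\subseteq E\otimes R^{\infty}(\wV)$. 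Your derivation of the claim is sound: the identity $\wV u_k=h$ with $u_k=(I_E\otimes\wV_k)z_k$ and $\wV S\wV=\wV$ gives $Sh-u_k\in N(\wV)\subseteq E\otimes R(\wV_k)$, hence $Sh\in\bigcap_k\bigl(E\otimes R(\wV_k)\bigr)$. But note it relies on two facts the paper's argument never needs: first, that each $R(\wV_k)$ is closed --- this is true for regular representations, though your pointer to Theorem \ref{cc} is off target (that theorem concerns only kernels); the clean source inside this paper is Proposition \ref{d2}(iv), since $\wV_kS^{(k)}\wV_k=\wV_k$ forces $R(\wV_k)=R(\wV_kS^{(k)})$, the range of a bounded idempotent, to be closed --- and second, the exchange $\bigcap_k\bigl(E\otimes R(\wV_k)\bigr)=E\otimes R^{\infty}(\wV)$, which your decreasing-projection argument handles correctly because each $R(\wV_k)$ reduces $\sigma(\mathcal{B})$, so $I_E\otimes P_k$ is well defined and converges strongly to $I_E\otimes P_\infty$. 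In exchange for this extra machinery you obtain a self-contained proof of the invariance property rather than a citation, and the mechanism ($Sh$ differs from a vector of $E\otimes R(\wV_k)$ by an element of $N(\wV)$) is made explicit; the paper's approach is shorter and purely algebraic once Theorem \ref{cc} is in hand.
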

\begin{proof}
	Let $\eta \in N(\wV) \subseteq E \ot R(\wV),$ then there exists  $ u \in E^{\ot {2}} \ot \mathcal{H}$ such that $\eta= (I_{E} \ot \widetilde{V})u$. It gives \begin{align*}\widetilde{V}_{2} (I_{E}\otimes S)\eta &= \widetilde{V}_{2} (I_{E} \otimes S\wV)u =\widetilde{V}(I_{E} \otimes \widetilde{V}S\widetilde{V})u = \widetilde{V}(I_{E} \otimes \widetilde{V})u=\widetilde{V}\eta=0.
	\end{align*}
	From Theorem \ref{cc}, we have $(I_E \ot S){N(\wV)} \subseteq N(\wV_2) \subseteq E^{\ot 2} \ot { {{R}^{\infty}{(\widetilde{V})}}}.$
\end{proof} 

The following result from \cite{AHS22} summarizes various properties of a regular c.b.c. repesentation:

\begin{proposition}\label{d2}
	Let $(\sigma,V)$ be a regular c.b.c. representation of ${E}$ on $\mathcal H$ with generalized inverse $S$. Then
	\begin{enumerate}
		\item[(i)] ${{{R}^{\infty}{(\widetilde{V})}}}$ is closed;
		\item[(ii)] $\wV(E \ot {{{R}^{\infty}{(\widetilde{V})}}})={{{R}^{\infty}{(\widetilde{V})}}};$
		\item[(iii)] $S({{{R}^{\infty}{(\widetilde{V})}}}) \subseteq E \ot {{{R}^{\infty}{(\widetilde{V})}}};$
		\item[(iv)] $\wV_nS^{(n)}\wV_n=\wV_n$ for all $n\in \mathbb{N}.$
	\end{enumerate}
\end{proposition}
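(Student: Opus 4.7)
The plan is to prove the four statements in the order (iv), (i), (iii), (ii), because (i) drops out as a clean corollary of the $\{1\}$-inverse identity in (iv), while (iii) and (ii) both rely on the closedness of the ranges $R(\widetilde{V}_n)$ together with the same tensor-level decomposition of $Sh$ that drives (iv).

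For (iv) I would argue by induction on $n$, in fact proving the sharper statement that $\widetilde{V}_n S^{(n)}$ acts as the identity on $R(\widetilde{V}_n)$; this clearly implies $\widetilde{V}_n S^{(n)} \widetilde{V}_n = \widetilde{V}_n$. The base case $n=1$ is just $\widetilde{V} S \widetilde{V} = \widetilde{V}$. For the inductive step, using the factorisations $\widetilde{V}_{n+1} = \widetilde{V}(I_E \otimes \widetilde{V}_n)$ and $S^{(n+1)} = (I_E \otimes S^{(n)}) S$, I would rewrite
\begin{align*}
\widetilde{V}_{n+1} S^{(n+1)} h = \widetilde{V}\bigl(I_E \otimes \widetilde{V}_n S^{(n)}\bigr) S \widetilde{V} v
\end{align*}
for $h = \widetilde{V}_{n+1} u \in R(\widetilde{V}_{n+1})$, where $v = (I_E \otimes \widetilde{V}_n) u$ and $h = \widetilde{V} v$. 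The key observation is that $S\widetilde{V} v - v \in N(\widetilde{V})$ (since $\widetilde{V} S \widetilde{V} = \widetilde{V}$), and by regularity $N(\widetilde{V}) \subseteq E \otimes R^\infty(\widetilde{V}) \subseteq E \otimes R(\widetilde{V}_n)$; hence $S\widetilde{V} v$ also lies in $E \otimes R(\widetilde{V}_n)$. On this subspace $I_E \otimes \widetilde{V}_n S^{(n)}$ acts as the identity by the inductive hypothesis, after which $\widetilde{V} S \widetilde{V} v = \widetilde{V} v = h$ completes the step.

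Once (iv) is available, (i) follows at once: $\widetilde{V}_n S^{(n)}$ is a bounded idempotent on $\mathcal{H}$ (as $(\widetilde{V}_n S^{(n)})^2 = (\widetilde{V}_n S^{(n)} \widetilde{V}_n) S^{(n)} = \widetilde{V}_n S^{(n)}$) whose range equals $R(\widetilde{V}_n)$, so each $R(\widetilde{V}_n)$ is closed and $R^\infty(\widetilde{V}) = \bigcap_n R(\widetilde{V}_n)$ is closed as an intersection of closed subspaces. For (iii), pick $h \in R^\infty(\widetilde{V})$; for each $n \ge 1$ write $h = \widetilde{V} v_n$ with $v_n \in R(I_E \otimes \widetilde{V}_{n-1}) \subseteq E \otimes R(\widetilde{V}_{n-1})$, and repeat the regularity trick: $Sh = S\widetilde{V} v_n = v_n - (v_n - S\widetilde{V} v_n)$ lies in $E \otimes R(\widetilde{V}_{n-1})$ because the second summand sits in $N(\widetilde{V}) \subseteq E \otimes R^\infty(\widetilde{V})$. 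Intersecting over $n$ yields $Sh \in E \otimes R^\infty(\widetilde{V})$. Finally, (ii) is short: the inclusion $\widetilde{V}(E \otimes R^\infty(\widetilde{V})) \subseteq R^\infty(\widetilde{V})$ follows from $V(\xi) \widetilde{V}_n u = \widetilde{V}_{n+1}(\xi \otimes u) \in R(\widetilde{V}_{n+1})$ applied for each $n$, and the reverse inclusion from $h = \widetilde{V}(Sh)$ (valid since $h \in R(\widetilde{V})$) combined with (iii).

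The main obstacle is the potential circularity between (i) and (iv): one cannot freely manipulate $E \otimes R(\widetilde{V}_n)$ as a closed invariant subspace until closedness has been established, so the inductive step in (iv) must be phrased so that the induction hypothesis enters only as the assertion that $I_E \otimes \widetilde{V}_n S^{(n)}$ fixes elements of $E \otimes R(\widetilde{V}_n)$. After that, the single non-algebraic ingredient throughout is the regularity hypothesis $N(\widetilde{V}) \subseteq E \otimes R^\infty(\widetilde{V})$, which is precisely what forces the pseudoinverse residue $S\widetilde{V} v - v$ to stay inside the ranges we need.
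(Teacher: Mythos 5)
Your argument is correct, but note that this paper does not actually prove Proposition \ref{d2}: it is quoted from \cite{AHS22}, so there is no in-paper proof to compare yours against. Taken on its own terms, your induction for (iv) works: the key point that $S\widetilde{V}v-v\in N(\widetilde{V})\subseteq E\otimes R^{\infty}(\widetilde{V})\subseteq E\otimes \overline{R(\widetilde{V}_n)}$, combined with the observation that the bounded operator $I_E\otimes \widetilde{V}_nS^{(n)}$ fixes every elementary tensor $\xi\otimes m$ with $m\in R(\widetilde{V}_n)$ and hence, by continuity, all of $E\otimes\overline{R(\widetilde{V}_n)}$, lets the induction hypothesis apply exactly where you need it without presupposing closedness; and deducing (i) from the boundedness and idempotency of $\widetilde{V}_nS^{(n)}$, whose range is $R(\widetilde{V}_n)$, is clean, as are your derivations of (iii) and (ii). Two small points are left implicit and deserve a sentence each in a final write-up: first, every ampliation $I_E\otimes(\cdot)$ you use must be well defined on the interior tensor product, which holds because the maps involved intertwine the relevant left actions (the paper glosses over this as well); second, the step ``intersecting over $n$'' in (iii) uses the identity $\bigcap_{n}\bigl(E\otimes R(\widetilde{V}_n)\bigr)=E\otimes R^{\infty}(\widetilde{V})$, which is true but needs the standard argument that the projections $I_E\otimes P_{R(\widetilde{V}_n)}$ decrease and converge strongly to $I_E\otimes P_{R^{\infty}(\widetilde{V})}$, available once your proof of (i) has shown each $R(\widetilde{V}_n)$ to be closed. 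With those justifications added, your proof is complete and self-contained.
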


\begin{proposition}
	Let $(\sigma,V)$ be a c.b.c. representation of $E$ on $\mathcal H$ with closed range, and let $S$ be the generalized inverse of $\widetilde{V}$. Then the following statements are equivalent:
	\begin{enumerate}
		\item[(i)] $(\sigma,V)$ is regular;
		\item[(ii)] $(I_E \ot S^{(k)}){N(\wV)} \subseteq E^{\ot {(k+1)}} \ot { {{R}^{\infty}{(\widetilde{V})}}}$ for all $k\ge 0;$
		\item[(iii)] $(I_E \ot S^{(k)}){N(\wV)} \subseteq E^{\ot {(k+1)}} \ot { {{R}{(\widetilde{V})}}}$ for all $k\ge 0.$
	\end{enumerate}
\end{proposition}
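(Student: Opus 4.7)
The plan is to prove (i) $\Rightarrow$ (ii) $\Rightarrow$ (iii) $\Rightarrow$ (i). I adopt the convention $S^{(0)}=I_{\mathcal H}$, so that the $k=0$ case of (ii) reads $N(\wV)\subseteq E\otimes R^{\infty}(\wV)$, which together with the closed-range hypothesis is exactly the definition of regularity.

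For (i) $\Rightarrow$ (ii), I would argue by induction on $k$. The base $k=0$ is the definition of regularity, and the case $k=1$ is Proposition \ref{d3}. For the inductive step, I would use the factorization
\[
I_E\otimes S^{(k+1)}=(I_{E^{\otimes(k+1)}}\otimes S)(I_E\otimes S^{(k)}),
\]
apply the induction hypothesis to land inside $E^{\otimes(k+1)}\otimes R^{\infty}(\wV)$, and then apply Proposition \ref{d2}(iii), namely $S(R^{\infty}(\wV))\subseteq E\otimes R^{\infty}(\wV)$, to push the image into $E^{\otimes(k+2)}\otimes R^{\infty}(\wV)$. The implication (ii) $\Rightarrow$ (iii) is immediate from $R^{\infty}(\wV)\subseteq R(\wV)$.

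The main work is in (iii) $\Rightarrow$ (i); this is where the closed-range hypothesis is fully used. I would prove by induction on $n\geq 1$ that $N(\wV)\subseteq E\otimes R(\wV_n)$, which is condition (1) of Theorem \ref{cc} with $m=n$ and thus characterises regularity. The base case $n=1$ is precisely the $k=0$ instance of (iii). For the inductive step, pick $\eta\in N(\wV)$ and use (iii) with $k=n-1$ to write $(I_E\otimes S^{(n-1)})\eta=(I_{E^{\otimes n}}\otimes\wV)v$ for some $v\in E^{\otimes(n+1)}\otimes\mathcal H$. Now apply $I_E\otimes\wV_{n-1}$ to both sides: on the right, the identity $(I_E\otimes\wV_{n-1})(I_{E^{\otimes n}}\otimes\wV)=I_E\otimes\wV_n$ (coming from the factorization $\wV_n=\wV_{n-1}(I_{E^{\otimes(n-1)}}\otimes\wV)$) yields $(I_E\otimes\wV_n)v$; on the left, Proposition \ref{d2}(iv) says $\wV_{n-1}S^{(n-1)}$ is idempotent with range $R(\wV_{n-1})$, and the induction hypothesis places $\eta$ in $E\otimes R(\wV_{n-1})$, so the left-hand side collapses to $\eta$. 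Hence $\eta=(I_E\otimes\wV_n)v\in E\otimes R(\wV_n)$, closing the induction.

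The step I expect to be the main obstacle is this inductive argument in (iii) $\Rightarrow$ (i): one must simultaneously feed in the induction hypothesis (that $\eta$ already lies in $E\otimes R(\wV_{n-1})$) and the idempotence of $\wV_{n-1}S^{(n-1)}$ on $R(\wV_{n-1})$ to cancel the $S^{(n-1)}$ factor and upgrade the conclusion from membership in $E\otimes R(\wV)$ to membership in $E\otimes R(\wV_n)$. The rest, including the book-keeping identity $(I_E\otimes\wV_{n-1})(I_{E^{\otimes n}}\otimes\wV)=I_E\otimes\wV_n$ and the interpretation of ``$E\otimes R(\wV_n)$'' as the range of $I_E\otimes\wV_n$, is routine.
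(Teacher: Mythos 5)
Your implications (i)$\Rightarrow$(ii) and (ii)$\Rightarrow$(iii) are correct and essentially the paper's argument: the paper peels off the factors of $S$ in the other order, writing $I_E\ot S^{(k)}=(I_{E^{\ot 2}}\ot S^{(k-1)})(I_E\ot S)$ and then applying Propositions \ref{d3} and \ref{d2}(iii), which is legitimate there because regularity is the standing hypothesis in that direction.

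The problem is in (iii)$\Rightarrow$(i), at precisely the step you single out as the crux. To cancel the factor $S^{(n-1)}$ you invoke Proposition \ref{d2}(iv), i.e. $\wV_{n-1}S^{(n-1)}\wV_{n-1}=\wV_{n-1}$; but Proposition \ref{d2} is stated only for \emph{regular} representations, and regularity is exactly the conclusion that (iii)$\Rightarrow$(i) must produce, so as written the argument is circular. The identity is not free of regularity either: already for $n-1=2$ one computes $\wV_2S^{(2)}\wV_2u=\wV_2u-\wV(I_E\ot\wV S)\xi$ with $\xi=(I_{E\ot\mathcal H}-S\wV)(I_E\ot\wV)u\in N(\wV)$, and annihilating the error term uses $N(\wV)\subseteq R(I_E\ot\wV)$, i.e. a regularity-type condition. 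Your scheme is repairable, since the outer induction hypothesis $N(\wV)\subseteq R(I_E\ot\wV_m)$ for $m\le n-1$ does suffice to prove, by a separate induction of the same flavour, the restricted identity $\wV_{n-1}S^{(n-1)}\wV_{n-1}=\wV_{n-1}$; but that sub-induction is genuine additional work which you neither supply nor acknowledge, so there is a real gap. The paper sidesteps the issue entirely: for $\zeta\in N(\wV_n)$ it uses the purely algebraic telescoping identity for $I-S^{(n)}\wV_n$ from \cite[Lemma 5.4]{AHS22}, namely $\zeta=\sum_{i=0}^{n-1}(I_{E^{\ot n-i}}\ot S^{(i)})(I_{E^{\ot n-(i+1)}}\ot P_{N(\wV)})(I_{E^{\ot n-i}}\ot\wV_i)\zeta$, which requires no regularity; each summand lies in $E^{\ot n}\ot R(\wV)$ by hypothesis (iii), so $N(\wV_n)\subseteq E^{\ot n}\ot R(\wV)$ for every $n$, and Theorem \ref{cc} then yields regularity. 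Recasting your step in that form (or proving the regularity-free version of \ref{d2}(iv) you actually need) would close the gap.
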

\begin{proof}
	$(i)\Rightarrow (ii)$ It follows from Propositions $\ref{d3}$ and $\ref{d2}$ that \begin{align*}
		(I_E \ot S^{(k)}){N(\wV)}&=(I_{E^{\ot 2}} \ot S^{(k-1)})(I_E\ot S){N(\wV)} \subseteq  {E^{\ot 2}} \ot S^{(k-1)} { {{R}^{\infty}{(\widetilde{V})}}} \\&\subseteq E^{\ot {(k+1)}} \ot { {{R}^{\infty}{(\widetilde{V})}}}.
	\end{align*}
	
	$(ii)\Rightarrow (iii)$ Since ${ {{R}^{\infty}{(\widetilde{V})}}}\subseteq R(\wV),$ we have $(I_E \ot S^{(k)}){N(\wV)} \subseteq E^{\ot {(k+1)}} \ot { {{R}{(\widetilde{V})}}}.$
	
	$(iii)\Rightarrow (i)$ For $n\ge 1$ and $\zeta \in N(\wV_n),$ from \cite[Lemma 5.4]{AHS22}, we get\begin{align*}
		\zeta&=\zeta -S^{(n)}\widetilde{V}_{n}\zeta=\sum_{i=0}^{n-1}(I_{E^{\ot n-i}} \ot S^{(i)})(I_{E^{\ot n-(i+1)}}\ot P_{N(\wV)})(I_{E^{\ot n-i}} \ot \widetilde{V}_i)\zeta.
	\end{align*}
	Since $(I_{E^{\ot n-(i+1)}}\ot P_{N(\wV)})(I_{E^{\ot n-i}} \ot \widetilde{V}_i)\zeta\in {E^{\ot n-(i+1)}}\ot N(\wV)$ and $(I_E \ot S^{(k)}){N(\wV)} \subseteq E^{\ot {(k+1)}} \ot { {{R}{(\widetilde{V})}}}$ for  $k\ge 0,$ we have $\zeta \in E^{\ot n}\ot { {{R}{(\widetilde{V})}}},$ and hence $N(\wV_n) \subseteq E^{\ot n}\ot { {{R}{(\widetilde{V})}}}.$ So from Theorem \ref{cc}, $(\sigma,V)$ is regular.
\end{proof}

Let $(\sigma,V)$ be a c.b.c. representation of $E$ on $\mathcal H$ with closed range. The {\it Moore-Penrose inverse} $\wV^{\dagger}$ of $(\sigma,V)$ \cite{AHS22,BG03,G77} is defined by $$\wV^{\dagger}:=\wV_0^{-1}P_{R(\wV)},$$ where $\wV_0=\wV|_{N(\wV)^{\perp}}: {N(\wV)^{\perp}} \to R(\wV)$ and $P_{R(\wV)}$ is the orthogonal projection of $\mathcal{H}$ onto ${R(\wV)}.$ Equivalently, the  Moore-Penrose inverse of $(\sigma,V)$ is defined  as the unique solution of the following four equations: \begin{equation*}
	\widetilde{V}\widetilde{V}^{\dagger}\widetilde{V}=\widetilde{V},\quad \widetilde{V}^{\dagger}\widetilde{V}\widetilde{V}^{\dagger}=\widetilde{V}^{\dagger},\quad (\widetilde{V}\widetilde{V}^{\dagger})^*=\widetilde{V}\widetilde{V}^{\dagger},\quad( \widetilde{V}^{\dagger}\widetilde{V})^*=\widetilde{V}^{\dagger}\widetilde{V}.
\end{equation*}

\begin{notation}
	For $n\in \mathbb{N},$ define $\widetilde{V}^{\dagger (n)}:\mathcal{H}\to E^{\ot n}\ot \mathcal{H}$ by $$\widetilde{V}^{\dagger(n)}:= (I_{E^{\otimes n-1}}\ot \widetilde{V}^{\dagger})(I_{E^{\otimes n-2}}\ot\widetilde{V}^{\dagger})\dots(I_{E}\ot \widetilde{V}^{\dagger})\widetilde{V}^{\dagger}.$$
\end{notation}

\begin{proposition}\cite{G77,AHS22}
	Suppose that $(\sigma,V)$ is a c.b.c. representation of $E$ on $\mathcal H$ with closed range. Then 
	\begin{enumerate}
		\item  $ R({\widetilde{V}}^\dagger) =R({\widetilde{V}}^*)=N({\widetilde{V})}^{\perp};$   $\:\:\:\:\:\:\:\:\:\:\:\:\:\:\:\:\:\:$   $5.$ $N({\widetilde{V}})=N(\widetilde{V}^\dagger\widetilde{V})=N({\widetilde{V}}^{\dagger*});$
		\item $\widetilde{V}\widetilde{V}^{\dagger}=P_{R({\widetilde{V}})}$,$\widetilde{V}^{\dagger}\widetilde{V}=P_{R({\widetilde{V}^*})};$ $\:\:\:\:\:\:\:\:\:\:\:\:\:$   $6.$  $\widetilde{V}^*\widetilde{V}\widetilde{V}^{\dagger}=\widetilde{V}^\dagger\widetilde{V}\widetilde{V}^*=\widetilde{V}^*;$
		\item$ N(\widetilde{V}^{\dagger})=N(\widetilde{V}\widetilde{V}^{\dagger})=N({\widetilde{V}}^*);$     $\:\:\:\:\:\:\:\:\:\:\:\:\:\:\:$    $7.$ $({\widetilde{V}}^\dagger)^\dagger=\widetilde{V};$ 
		\item$R(\widetilde{V})=R(\widetilde{V}\widetilde{V}^{\dagger})= R({\widetilde{V}}^{\dagger*});$   $\:\:\:\:\:\:\:\:\:\:\:\:\:\:\:\:\:\:$    $8.$ $({\widetilde{V}}^*)^\dagger=({\widetilde{V}}^\dagger)^*.$
	\end{enumerate}
\end{proposition}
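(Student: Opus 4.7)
The plan is to derive all eight identities as routine consequences of the four Moore--Penrose defining equations
\[
\wV\wV^{\dagger}\wV=\wV,\quad \wV^{\dagger}\wV\wV^{\dagger}=\wV^{\dagger},\quad (\wV\wV^{\dagger})^{*}=\wV\wV^{\dagger},\quad (\wV^{\dagger}\wV)^{*}=\wV^{\dagger}\wV.
\]
The pivotal observation is that $\wV\wV^{\dagger}$ and $\wV^{\dagger}\wV$ are both orthogonal projections on $\mathcal H$ and on $E\otimes_{\sigma}\mathcal H$ respectively: self-adjointness is built into the last two defining equations, and idempotency drops out by sandwiching with the first two, e.g.\ $(\wV\wV^{\dagger})^{2}=(\wV\wV^{\dagger}\wV)\wV^{\dagger}=\wV\wV^{\dagger}$. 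Once this is in hand, the entire list collapses into bookkeeping.

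First I would prove (4) by checking $R(\wV\wV^{\dagger})=R(\wV)$ via the identity $\wV h=\wV\wV^{\dagger}\wV h$; combined with the projection fact this gives the first half of (2), namely $\wV\wV^{\dagger}=P_{R(\wV)}$. Then $N(\wV\wV^{\dagger})=R(\wV)^{\perp}=N(\wV^{*})$, and $\wV^{\dagger}h=0$ whenever $\wV\wV^{\dagger}h=0$ because $\wV^{\dagger}=\wV^{\dagger}\wV\wV^{\dagger}$, giving (3). Symmetric reasoning with $\wV^{\dagger}\wV$ in place of $\wV\wV^{\dagger}$ produces the second half of (2) and, together with the observation that $R(\wV^{\dagger})=R(\wV^{\dagger}\wV)$ is closed (being the range of a projection), yields (1) and the first part of (5) via $N(\wV)=N(\wV^{\dagger}\wV)$ and $N(\wV^{\dagger *})=R(\wV^{\dagger})^{\perp}=N(\wV)$. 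For (6), I would simply take adjoints of $\wV\wV^{\dagger}\wV=\wV$ and invoke self-adjointness of $\wV\wV^{\dagger}$ to obtain $\wV^{*}\wV\wV^{\dagger}=\wV^{*}$, and symmetrically for the other equality.

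The identities (7) and (8) are pure uniqueness arguments. For (7), the four Moore--Penrose equations are symmetric in $\wV$ and $\wV^{\dagger}$, so $\wV$ itself satisfies the equations that characterize the Moore--Penrose inverse of $\wV^{\dagger}$; by uniqueness of the generalized inverse with all four properties, $(\wV^{\dagger})^{\dagger}=\wV$. For (8), taking adjoints of the four defining equations for $\wV^{\dagger}$ shows that $(\wV^{\dagger})^{*}$ satisfies exactly the four conditions that define $(\wV^{*})^{\dagger}$, and uniqueness closes the argument. There is no genuine obstacle in this proposition; the only thing to be careful about is that the closed-range hypothesis on $\wV$ is silently used whenever we invoke $R(\wV^{*})=N(\wV)^{\perp}$ and whenever we treat $R(\wV^{\dagger})$ as closed, but these are immediate from the projection structure established at the outset.
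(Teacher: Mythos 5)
Your proof is correct. The paper itself offers no proof of this proposition --- it is simply quoted with citations to Groetsch \cite{G77} and to \cite{AHS22} --- and your derivation is precisely the standard argument those sources use: establish that $\widetilde{V}\widetilde{V}^{\dagger}$ and $\widetilde{V}^{\dagger}\widetilde{V}$ are orthogonal projections onto $R(\widetilde{V})$ and $R(\widetilde{V}^{\dagger})=R(\widetilde{V}^{*})=N(\widetilde{V})^{\perp}$ respectively, read off the range and kernel identities (1)--(6) from this projection structure together with the closed-range fact $R(\widetilde{V}^{*})=N(\widetilde{V})^{\perp}$, and obtain (7) and (8) from the uniqueness of the solution of the four Moore--Penrose equations.
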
 

\begin{remark}
	Let $(\sigma,V)$ be a c.b.c. representation of $E$ on $\mathcal H$ with closed range, and let $\wV^{\dagger}$ be the Moore-Penrose inverse of $(\sigma,V).$ Then $(\wV^*\wV)^{\dagger}=\wV^{\dagger}\wV^{*\dagger}.$  Indeed, let $A=\wV^*\wV$ and $B=\wV^{\dagger}\wV^{*\dagger},$ then $ABA=A, BAB=B, (AB)^*=AB, (BA)^*=BA.$ From the uniqueness of the Moore-Penrose inverse, we have $(\wV^*\wV)^{\dagger}=\wV^{\dagger}\wV^{*\dagger}.$
\end{remark}

\begin{proposition}
	Let $(\sigma,V)$ be a regular c.b.c. representation of $E$ on $\mathcal H$ with closed range. Then \begin{enumerate}
		\item $N(\wV^{\dagger (n)})\cap R(\wV_n)=\{0\};$
		\item $R(\wV_n)=\{h : \:\: \wV_n\wV^{\dagger (n)} h=h\}.$
	\end{enumerate}
\end{proposition}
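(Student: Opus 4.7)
The plan is to deduce both statements from the fact that the Moore-Penrose inverse $\wV^\dagger$ is a generalized inverse of $\wV$, and therefore Proposition \ref{d2}(iv) applies (with $S$ specialized to $\wV^\dagger$): we have the key identity
\begin{equation*}
\wV_n\,\wV^{\dagger(n)}\,\wV_n = \wV_n, \qquad n \in \mathbb{N}.
\end{equation*}
This is just because $\wV^{\dagger(n)}$ is formally defined by exactly the same tensor product formula as $S^{(n)}$, with $S$ replaced by the Moore-Penrose inverse. So the first thing I would do is record this identity explicitly as the starting point.

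For statement (1), I would take $h \in N(\wV^{\dagger(n)}) \cap R(\wV_n)$ and write $h = \wV_n u$ for some $u \in E^{\otimes n} \otimes \mathcal{H}$. Applying $\wV_n$ to the equation $\wV^{\dagger(n)} h = 0$ gives $\wV_n \wV^{\dagger(n)} \wV_n u = 0$, which by the identity above reduces to $\wV_n u = 0$. Thus $h = 0$, proving the intersection is trivial.

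For statement (2), one inclusion is immediate: if $\wV_n \wV^{\dagger(n)} h = h$, then $h$ lies in $R(\wV_n)$ by construction. For the reverse inclusion, I would take $h \in R(\wV_n)$, write $h = \wV_n u$, and compute
\begin{equation*}
\wV_n \wV^{\dagger(n)} h = \wV_n \wV^{\dagger(n)} \wV_n u = \wV_n u = h,
\end{equation*}
using Proposition \ref{d2}(iv) again in the middle step.

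There is no real obstacle here; everything is a direct consequence of Proposition \ref{d2}(iv) together with the defining property $\wV\wV^\dagger \wV = \wV$ of the Moore-Penrose inverse. The only subtle point worth mentioning is that regularity of $(\sigma,V)$ is what allows us to invoke Proposition \ref{d2}(iv) in its full strength for arbitrary $n$, so I would note this dependence explicitly rather than treat the identity as a tautology.
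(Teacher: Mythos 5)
Your proof is correct and follows essentially the same route as the paper: both arguments rest on the identity $\wV_n\wV^{\dagger(n)}\wV_n=\wV_n$ from Proposition \ref{d2}(iv) (valid since $\wV^{\dagger}$ is a generalized inverse and $(\sigma,V)$ is regular). The only cosmetic difference is that for $h\in R(\wV_n)$ the paper writes $h=\wV_n\wV_n^{\dagger}h$ using the Moore--Penrose inverse of $\wV_n$, whereas you take an arbitrary preimage $h=\wV_n u$, which gives the same computation slightly more directly.
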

\begin{proof}
	Suppose that $(\sigma,V)$ is regular, then by \cite[Remark 3.7]{AHS22}, we get $\{h : \:\: \wV_n\wV^{\dagger (n)} h=h\}\subseteq R(\wV_n).$ Let $h\in R(\wV_n),$ from Proposition \ref{d2}, we have
	\begin{align*}
		h=P_{R(\wV_n)}h=\wV_n\wV_n^{\dagger}h=(\wV_n\wV^{\dagger(n)}\wV_n)\wV_n^{\dagger}h=\wV_n\wV^{\dagger(n)}(\wV_n\wV_n^{\dagger}h)=\wV_n\wV^{\dagger(n)}h.
	\end{align*}This implies that for $h\in N(\wV^{\dagger (n)})\cap R(\wV_n),$ we get $h=\wV_n\wV^{\dagger(n)}h=0.$
\end{proof}

\section{Wold-type decomposition for bi-regular covariant representations}

In this section, we will study the notion of Cauchy dual of a covariant representation with closed range and a Wold-type decomposition for a bi-regular c.b.c. representation and explore various properties. Let $(\sigma, V)$ be a c.b.c. representation of $E$ on $\mathcal{H}$ with closed range. Define $\wV': E \otimes \mathcal{H} \to \mathcal{H}$ by  $$\wV':=\widetilde{V}(\widetilde{V}^*\widetilde{V})^\dagger.$$ 

Since $\wt{V}^*\wt{V}(\phi(b) \otimes I_{\mathcal{H}})=\wt{V}^*\sigma(b)\wt{V}=(\phi(b) \otimes I_{\mathcal{H}})\wt{V}^* \wt{V}$ for all $b \in \mathcal{B}.$ Pre and post-multiply in the last inequality by $(\wt{V}^*\wt{V})^{\dagger}$, since $(\wt{V}^*\wt{V})^{\dagger}=\wV^{\dagger}\wV^{*\dagger},$ we get $$\wt{V}^{\dagger}\wt{V}(\phi(b) \otimes I_{\mathcal{H}})(\wt{V}^*\wt{V})^{\dagger}=(\wt{V}^*\wt{V})^{\dagger}(\phi(b) \otimes I_{\mathcal{H}})\wt{V}^{\dagger} \wt{V}.$$ Again pre-multiply by $\wV,$ we have 
\begin{equation}\label{U6}
	\sigma(b)\wt{V}'=\wV'(\phi(b) \otimes I_{\mathcal{H}})\wV^{\dagger}\wV.
\end{equation} Since $\wV(\phi(b) \otimes I_{\mathcal{H}})=\sigma(b)\wV,$ simple computations prove that \begin{equation}\label{UG7}
	\wt{V}'(\phi(b) \otimes I_{\mathcal{H}})=\wt{V}^{*\dagger}\wt{V}^{\dagger}\sigma(b)\wV.
\end{equation}
Let $\eta \in E\ot \mathcal{H}=R(\wV^*)\oplus R(\wV^*)^{\perp}=R(\wV^*)\oplus N(\wV),$ then there exist $\eta_1\in R(\wV^*)$ and $\eta_2\in N(\wV)$  such that $\eta=\eta_1+\eta_2.$ Since $N(\wV')=N(\wV),$ from Equations (\ref{U6}) and (\ref{UG7}), we have \begin{align*}
	\sigma(b)\wt{V}'\eta&=\wV'(\phi(b) \otimes I_{\mathcal{H}})\wV^{\dagger}\wV \eta= \wV'(\phi(b) \otimes I_{\mathcal{H}})\eta_1\\&=\wV'(\phi(b) \otimes I_{\mathcal{H}})(\eta_1+\eta_2)=\wV'(\phi(b) \otimes I_{\mathcal{H}})\eta,
\end{align*} and hence $\wt{V}'(\phi(b) \otimes I_{\mathcal{H}})=\sigma(b)\wt{V}'.$ So from Lemma \ref{MSC}, $(\sigma,V')$ is a c.b.c. representation.

\begin{definition}
	We say that  the c.b.c. representation $(\sigma,V')$ defined as above is {\rm Cauchy dual} of $(\sigma,V).$
\end{definition}

\begin{remark}
	Suppose that $(\sigma,V)$ is a left-invertible c.b.c. representation of $E$ on $\mathcal{H}.$ Then $\wt{V}'=\widetilde{V}(\widetilde{V}^*\widetilde{V})^{-1}.$ 
\end{remark}

The next result is a characterization of the Cauchy dual which is an analogue of \cite[Proposition 2.1]{EMZ19}. 

\begin{proposition}\label{R1}
	Let $(\sigma,V)$ be a c.b.c. representation of $E$ on $\mathcal{H}$ with closed range. Then
	\begin{enumerate}
		\item  \label{DS1}	 $\wt{V}'=\widetilde{V}^{\dagger *}=\widetilde{V}^{*\dagger};$    $4.$ $\wt{V}'{^*}\wt{V}'=({\wt{V}^*\wV})';$
		\item $\wt{V}''=\widetilde{V};$   $\:\:\:\:\:\:\:\:\:\:\:\:\:\:\:$   $5.$  $\wt{V}' =\widetilde{V}$ if and only if $\widetilde{V}$ is a partial isometry;
		\item $\wt{V}{^*}'=\wt{V}'{^*}$;     $\:\:\:\:\:\:\:\:\:\:\:$    $6.$ $\widetilde{V}^*\wt{V}'=\wt{V}{^*}'\widetilde{V}=P_{{N}(\widetilde{V})^{\perp}},$ $\wt{V}'\widetilde{V}^*=\widetilde{V}\wt{V}{^*}'=P_{{R}(\widetilde{V})}.$ 
	\end{enumerate}
\end{proposition}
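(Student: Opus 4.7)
The plan is to prove (1) first, after which the remaining five identities reduce to short manipulations with the four Moore--Penrose equations, properties (1)--(8) of the preceding proposition, and the remark that $(\widetilde V^*\widetilde V)^\dagger = \widetilde V^\dagger\widetilde V^{*\dagger}$. No deeper structural argument is needed; the whole proof is algebraic, so the only risk is a careless use of the generally false identity $(XY)^\dagger = Y^\dagger X^\dagger$.

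For (1), I would unfold the definition and substitute the remark to get $\widetilde V' = \widetilde V(\widetilde V^*\widetilde V)^\dagger = \widetilde V\widetilde V^\dagger \widetilde V^{*\dagger}$. Now $\widetilde V\widetilde V^\dagger = P_{R(\widetilde V)}$ by property~(2), while $R(\widetilde V^{*\dagger}) = R((\widetilde V^\dagger)^*) = R(\widetilde V)$ by properties~(8) and~(4), so the projection acts as the identity on $R(\widetilde V^{*\dagger})$; this gives $\widetilde V' = \widetilde V^{*\dagger} = \widetilde V^{\dagger *}$. Statement~(2) then follows by applying~(1) to $\widetilde V'$, which has closed range $R(\widetilde V^{*\dagger}) = R(\widetilde V)$, and invoking~(7): $\widetilde V'' = (\widetilde V')^{*\dagger} = (\widetilde V^{*\dagger})^{*\dagger} = (\widetilde V^\dagger)^\dagger = \widetilde V$. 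Statement~(3) comes from two parallel applications of~(1): $(\widetilde V')^* = (\widetilde V^{*\dagger})^* = \widetilde V^\dagger$ by~(8), while applying~(1) with $\widetilde V$ replaced by $\widetilde V^*$ gives $(\widetilde V^*)' = (\widetilde V^*)^{*\dagger} = \widetilde V^\dagger$.

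For (4), direct computation gives $(\widetilde V')^*\widetilde V' = (\widetilde V^{*\dagger})^*\widetilde V^{*\dagger} = \widetilde V^\dagger\widetilde V^{*\dagger} = (\widetilde V^*\widetilde V)^\dagger$ from the remark; separately, for the self-adjoint operator $A = \widetilde V^*\widetilde V$ one has $A' = A(A^*A)^\dagger = A(A^\dagger)^2$, which collapses to $A^\dagger$ via $AA^\dagger = P_{R(A)}$ and $R(A^\dagger) \subseteq R(A)$. For (5), (1) turns $\widetilde V' = \widetilde V$ into the equivalent condition $\widetilde V^\dagger = \widetilde V^*$, and this characterizes partial isometries: the four Moore--Penrose equations for $\widetilde V$ are satisfied with $\widetilde V^*$ in place of $\widetilde V^\dagger$ precisely when $\widetilde V\widetilde V^*\widetilde V = \widetilde V$, and uniqueness closes the loop. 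For (6), I would plug~(1) into each expression: $\widetilde V^*\widetilde V' = \widetilde V^*\widetilde V^{*\dagger} = P_{R(\widetilde V^*)} = P_{N(\widetilde V)^\perp}$ and $(\widetilde V^*)'\widetilde V = \widetilde V^\dagger\widetilde V = P_{N(\widetilde V)^\perp}$ by property~(2); while $\widetilde V'\widetilde V^* = (\widetilde V^\dagger)^*\widetilde V^* = (\widetilde V\widetilde V^\dagger)^* = P_{R(\widetilde V)}$ and $\widetilde V(\widetilde V^*)' = \widetilde V\widetilde V^\dagger = P_{R(\widetilde V)}$.

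The one point that deserves genuine attention is the chain $R(\widetilde V^{*\dagger}) = R(\widetilde V)$ underlying the collapse in (1); without property~(4) the factor $\widetilde V\widetilde V^\dagger\widetilde V^{*\dagger}$ cannot be simplified, and the whole proof loses its pivot. Everything else is routine substitution, and I do not anticipate any further obstacle.
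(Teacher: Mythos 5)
Your proof is correct, and it is exactly the routine Moore--Penrose computation the paper has in mind: the paper prints no argument for Proposition \ref{R1}, presenting it as the analogue of \cite[Proposition 2.1]{EMZ19}, and your derivation of $\wt{V}'=\wV^{*\dagger}$ from $(\wV^*\wV)^{\dagger}=\wV^{\dagger}\wV^{*\dagger}$ together with $\wV\wV^{\dagger}=P_{R(\wV)}$ and $R(\wV^{\dagger*})=R(\wV)$, followed by the reduction of (2)--(6) to the standard identities and the uniqueness of the Moore--Penrose inverse (for the partial-isometry equivalence in (5)), is precisely that omitted verification. No gaps; the only points needing care --- closedness of $R(\wt{V}')$ before applying (1) to $\wt{V}'$, and $(A^2)^{\dagger}=(A^{\dagger})^2$ for the self-adjoint $A=\wV^*\wV$ --- are ones you handle correctly.
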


The following proposition gives the Cauchy duals of isomorphic covariant representations are isomorphic:
\begin{proposition}
	Let $(\sigma,V)$ be a c.b.c. representation of $E$ on $\mathcal{H}$ with closed range. If $U$ is a unitary operator on $\mathcal{H},$ then  $$(U^*\wV(I_E\ot U))'=U^*\wV'(I_E\ot U).$$
\end{proposition}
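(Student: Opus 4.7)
The plan is to unwind the definition of the Cauchy dual and show that conjugation by unitaries commutes with taking the Moore--Penrose inverse. Write $\widetilde{W}:=U^{*}\widetilde{V}(I_{E}\otimes U)$. First I would observe that $\widetilde{W}$ has closed range: since $U^{*}:\mathcal{H}\to\mathcal{H}$ and $I_{E}\otimes U$ are unitaries (the latter between $E\otimes_{U^{*}\sigma U}\mathcal{H}$ and $E\otimes_{\sigma}\mathcal{H}$), we have $R(\widetilde{W})=U^{*}R(\widetilde{V})$, which is closed. In particular, $\widetilde{W}^{\dagger}$ exists and its Cauchy dual $\widetilde{W}'=\widetilde{W}(\widetilde{W}^{*}\widetilde{W})^{\dagger}$ is well defined.

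The technical heart of the argument is the following conjugation identity: for a bounded operator $A:\mathcal{H}_{1}\to\mathcal{H}_{2}$ with closed range and unitaries $U_{1}:\mathcal{H}_{2}\to\mathcal{K}_{2}$, $U_{2}:\mathcal{K}_{1}\to\mathcal{H}_{1}$, one has $(U_{1}AU_{2})^{\dagger}=U_{2}^{*}A^{\dagger}U_{1}^{*}$. I would verify this by simply plugging $B:=U_{2}^{*}A^{\dagger}U_{1}^{*}$ into the four defining equations of the Moore--Penrose inverse: the two Penrose equations $(U_{1}AU_{2})B(U_{1}AU_{2})=U_{1}AU_{2}$ and $B(U_{1}AU_{2})B=B$ collapse using $U_{i}^{*}U_{i}=I$ and $AA^{\dagger}A=A$, $A^{\dagger}AA^{\dagger}=A^{\dagger}$; the self-adjointness conditions $((U_{1}AU_{2})B)^{*}=(U_{1}AU_{2})B$ and $(B(U_{1}AU_{2}))^{*}=B(U_{1}AU_{2})$ reduce to the self-adjointness of $AA^{\dagger}$ and $A^{\dagger}A$ after conjugation by unitaries. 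Uniqueness of the Moore--Penrose inverse then yields the claim.

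Applying this, I compute
\begin{equation*}
\widetilde{W}^{*}\widetilde{W}=(I_{E}\otimes U^{*})\widetilde{V}^{*}UU^{*}\widetilde{V}(I_{E}\otimes U)=(I_{E}\otimes U^{*})\,\widetilde{V}^{*}\widetilde{V}\,(I_{E}\otimes U),
\end{equation*}
and hence, by the conjugation identity applied to the two unitaries $I_{E}\otimes U^{*}$ and $I_{E}\otimes U$,
\begin{equation*}
(\widetilde{W}^{*}\widetilde{W})^{\dagger}=(I_{E}\otimes U^{*})\,(\widetilde{V}^{*}\widetilde{V})^{\dagger}\,(I_{E}\otimes U).
\end{equation*}
Combining this with the definition of $\widetilde{W}'$ and using $(I_{E}\otimes U)(I_{E}\otimes U^{*})=I_{E\otimes\mathcal{H}}$ gives
\begin{equation*}
\widetilde{W}'=U^{*}\widetilde{V}(I_{E}\otimes U)(I_{E}\otimes U^{*})(\widetilde{V}^{*}\widetilde{V})^{\dagger}(I_{E}\otimes U)=U^{*}\widetilde{V}'(I_{E}\otimes U),
\end{equation*}
which is the desired identity.

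I do not expect a genuine obstacle; the whole argument is a one-line computation once the conjugation-by-unitaries formula for $(\,\cdot\,)^{\dagger}$ is established. The only point requiring mild care is keeping track of the two different tensor products $E\otimes_{\sigma}\mathcal{H}$ and $E\otimes_{U^{*}\sigma U}\mathcal{H}$ so that $I_{E}\otimes U$ is genuinely a unitary between them; this is what legitimizes using $(I_{E}\otimes U)^{*}=I_{E}\otimes U^{*}$ in the cancellation above.
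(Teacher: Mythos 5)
Your proof is correct and is essentially the paper's argument: both reduce the claim to the fact that conjugation by unitaries passes through the Moore--Penrose inverse, verified by checking the four Penrose equations and invoking uniqueness. The only cosmetic difference is that the paper applies this to $\wV^*$ via the identity $\wV'=\wV^{*\dagger}$, whereas you apply it to $\wV^*\wV$ and work from the defining formula $\wV'=\wV(\wV^*\wV)^{\dagger}$ (and you are a bit more explicit about $I_E\ot U$ being a unitary between the two interior tensor products, which is a fair point of care).
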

\begin{proof}
	Suppose that $A=(I_E\ot U^*)\wV^*U$ and $B=U^*\wV'(I_E\ot U),$ then $ABA=A, BAB=B, (AB)^*=AB, (BA)^*=BA.$ From uniqueness of the Moore-Penrose inverse, we have $A^{\dagger}=B.$ By Propostion \ref{R1}, we get \begin{align*}
		(U^*\wV(I_E\ot U))'=(U^*\wV(I_E\ot U))^{*\dagger}=((I_E\ot U^*)\wV^*U)^{\dagger}=U^*\wV'(I_E\ot U).
	\end{align*}
\end{proof}
We discuss an example of the Cauchy dual c.b.c. representation.
\begin{example}\label{DHS1}
	For $n\in \mathbb{N},$ we denote $I_n:=\{1,2,\dots,n\}.$ Consider a Hilbert space $\mathcal{H}$ with an orthonormal basis $\{e_m :\: m\in \mathbb{Z}\}$ and a bounded set of  real numbers $\{w_{i,m} : i \in I_n,\:\:m\in \mathbb{Z} \}$ such that $w_{i,0}=0$ and $w_{i,m}\neq 0,m\in \mathbb{Z}\setminus\{0\}$ for all $i \in I_n.$  Suppose $E$ is an $n$-dimensional Hilbert space with the orthonormal basis $\{\delta_i\}_{i\in I_{n}}.$ The bilateral weighted shift c.b.c. representation, $($see \cite[Section 7]{AHS22}$)$, $(\rho, S^{w})$ of $E$ on $\mathcal{H}$ is defined by
	$$S^{w}(\delta_i)=V_i \:\: \mbox{and}\:\:\: \rho(b)=b I_{\mathcal{H}}, \:\:b \in \mathbb{C},$$ where $V_{i}(e_{m})=w_{i,m}e_{nm+i}$ for all $m\in \mathbb{Z}$ and $i\in I_n.$ It is easy to verify that $N(V_i)= span \{e_0\}$, $R(V_i)=\overline{span} \{e_j : \: j\in B_i:=\{nm+i :\: m\in \mathbb{Z}\setminus \{0\}\}\}, R(V_i)\perp R(V_j)$ for distinct $i,j\in I_n$ and  $$V_i^{*}(e_{j})=\begin{cases}
		w_{i,\frac{j-i}{n}}e_{\frac{j-i}{n}} & \text{if }  {j\in B_i}; \\
		0 & \text{if }   {j\notin B_i}.
	\end{cases}$$ Clearly $R(V_i^*)=\{e_m : \: m\in \mathbb{Z}\setminus\{0\}\}.$ Let $x\in N(V_i)^{\perp},$ then $$(\inf_{m\in \mathbb{Z}\setminus \{0\}} |w_{i,m}|) \|x\|\le \|V_ix\| \quad for \quad i\in I_n.$$ It follows that $V_i$ has
	closed range if and only if $\inf \{|w_{i,m}| : w_{i,m}\neq 0\}> 0.$ Now, simple computations show that the Cauchy dual of $V_i$ is $$V_i'(e_m)=V_i^{*\dagger}(e_{m})=\begin{cases}
		\frac{1}{w_{i,m}}e_{nm+i} & \text{if }  {m\neq 0}; \\
		0 & \text{if }   {m=0}.
	\end{cases}$$ Therefore the Cauchy dual $\widetilde{S}^{w '}(\delta_i \ot e_0)=V_i'(e_0)=0$ and $\widetilde{S}^{w '}(\delta_i \ot e_m)=V_i'(e_m)=\frac{1}{w_{i,m}}e_{nm+i}$ for $m\neq 0$ and $i\in I_n.$
\end{example}

Suppose that $(\sigma,V)$ is a left-invertible c.b.c. representation of ${E}$ on $\mathcal H,$ then the Moore-Penrose inverse $\wV^{\dagger}=(\wV^*\wV)^{-1}\wV^*$ is the left inverse of $\wV,$ and hence $\wV$ and $\wV^{\dagger}$ are regular. Therefore, every left-invertible c.b.c. representation is bi-regular. In general, the Moore-Penrose inverse of regular c.b.c. representation need not be regular. We have the following definition:

\begin{definition}
	Let $(\sigma,V)$ be a regular c.b.c. representation of ${E}$ on $\mathcal H.$ We say that $(\sigma,V)$ is {\rm bi-regular} if $N(I_{E^{\ot n}}\ot \wV^{\dagger})\subseteq R(\wV^{\dagger {(n)}})$ for all $n\in \mathbb{N},$ that is, $\wV^{\dagger}$ is regular, where 	$\widetilde{V}^{\dagger(n)}= (I_{E^{\otimes n-1}}\ot \widetilde{V}^{\dagger})(I_{E^{\otimes n-2}}\ot\widetilde{V}^{\dagger})\dots(I_{E}\ot \widetilde{V}^{\dagger})\widetilde{V}^{\dagger}.$ 
\end{definition}
\begin{remark}\label{PDS1}
	Let $(\sigma,V)$ be a regular c.b.c. representation of ${E}$ on $\mathcal H.$ Then $\wV^*$ is also regular, that is, $N(I_{E^{\ot n}}\ot \wV^{*})\subseteq R(\wV_n^{*})$ for all $n\in \mathbb{N}.$ Indeed, for $n\in \mathbb{N},$ let $\gamma \in R(\wV_n^*)^{\perp}=N(\wV_n).$ Since $(\sigma,V)$ is regular, from Theorem \ref{cc}, $N(\wV_n)\subseteq E^{\ot n}\ot R(\wV).$ There exists $\zeta\in E^{\ot n+1}\ot \mathcal{H}$ such that $(I_{E^{\ot n}}\ot \wV)\zeta=\gamma.$ Let $\eta\in N(I_{E^{\ot n}}\ot \wV^*),$ then $$\langle \eta,\gamma \rangle=\langle \eta,(I_{E^{\ot n}}\ot \wV)\zeta \rangle=\langle (I_{E^{\ot n}}\ot \wV^*)\eta,\zeta \rangle=0.$$ Thus $N(I_{E^{\ot n}}\ot \wV^{*})\subseteq R(\wV_n^{*})$ for all $n\in \mathbb{N}.$
\end{remark}
Now, we discuss some examples of bi-regular c.b.c. representations.
\begin{example} If a partial isometric c.b.c. representation $(\sigma,V)$ of ${E}$ on $\mathcal H$ satisfies $\wV\wV^*\wV=\wV$ and $\wV^*\wV\wV^*=\wV^*,$ then $\wV^*=\wV^{\dagger}.$ So from Remark \ref{PDS1}, regular partial isometric c.b.c. representation is bi-regular.
\end{example}

\begin{example}
	For $n\in \mathbb{N},$ let $\mathcal{H}$ be a Hilbert space with the orthonormal basis $\{e_m :\: m\in \mathbb{Z}\}$ and $E$ be an $n$-dimensional Hilbert space with the orthonormal basis $\{\delta_i\}_{i\in I_{n}}.$ It has been shown in \cite{AHS22} that if bilateral weighted shift c.b.c. representation $(\rho, S^{w})$ of $E$ on $\mathcal{H}$ is regular, then there exists atmost $m_0\in \mathbb{Z}$ such that $w_{i,m_0}=0.$ Let $(\rho, S^{w})$ be a regular c.b.c. representation such that $w_{i,0}=0$ and $w_{i,m}\neq 0,m\in \mathbb{Z}\setminus\{0\}$ for all $i \in I_n.$ Since $V_{i}(e_{m})=w_{i,m}e_{nm+i}$ for all $m\in \mathbb{Z}$ and $i\in I_n.$ It is easy to see that $N(V_i)= span \{e_0\}$, $R(V_i)=\overline{span} \{e_j : \: j\in B_i=\{nm+i :\: m\in \mathbb{Z}\setminus \{0\}\}\}, R(V_i)\perp R(V_j)$ for distinct $i,j\in I_n$ and  $$V_i^{*}(e_{j})=\begin{cases}
		w_{i,\frac{j-i}{n}}e_{\frac{j-i}{n}} & \text{if }  {j\in B_i}; \\
		0 & \text{if }   {j\notin B_i}.
	\end{cases}$$  By definition of the Moore-Penrose inverse of $V_i,$ $$V_i^{\dagger}(e_{j})=\begin{cases}
		\frac{1}{w_{i,\frac{j-i}{n}}}e_{\frac{j-i}{n}} & \text{if }  {j\in B_i}; \\
		0 & \text{if }   {j\notin B_i}.
	\end{cases}$$ This implies that $V_i^{\dagger}(e_{j})=\frac{1}{w_{i,\frac{j-i}{n}}}e_{\frac{j-i}{n}}=\frac{1}{(w_{i,\frac{j-i}{n}})^2}V_i^{*}(e_{j})$ for all $j\in B_i.$ It follows that $V_i^{\dagger}$ is regular for all $i\in I_n.$ Since $R(V_i)\perp R(V_j)$ for distinct $i,j\in I_n,$ we have $(\rho, S^{w})$ is bi-regular.
\end{example}

\subsection{Cauchy dual of bi-regular covariant representations}
A closed $\sigma(\mathcal{B})$-invariant subspace $\mathcal{E}$ of $\mathcal{H}$ is called {\it wandering}  for $(\sigma,V)$ if  $\mathcal{E}$ is orthogonal to  $\wV_n(E^{\ot n}\ot \mathcal{E})$ for all $n \in \mathbb{N}.$ We say that $(\sigma,V)$ has {\it generating wandering subspace property} if
$$\mathcal{H}=[\mathcal{E}]_{\wV}:=\bigvee_{n\geq0}\wV_n(E^{\ot n}\ot \mathcal{E}),$$ and in this case the wandering subspace $\mathcal{E}$ is called {\it generating wandering subspace}.

\begin{definition}
	Let $(\sigma,V)$ be a c.b.c. representation of $E$ on $\mathcal H.$ We say that $(\sigma,V)$ admits {\rm extended Wold-type decomposition} if there exists a wandering subspace $\mathcal{E}$ for $(\sigma,V)$ which decomposes  $\mathcal{H}$ into the direct sum of $(\sigma,V)$-reducing closed subspaces $$\mathcal{H} = [ \mathcal{E}]_{\widetilde{V}} \oplus R^{\infty}({\widetilde{V}})$$ such that $\widetilde{V}\vert_{{E \otimes R^{\infty}{(\widetilde{V})}\cap N({\widetilde{V})}}^\perp}:{E \otimes R^{\infty}{(\widetilde{V})}\cap N({\widetilde{V})^{\perp}}}\rightarrow R^\infty{(\widetilde{V})}$ is a unitary.
\end{definition}

Note that, if $(\sigma,V)$ admits the extended Wold-type decomposition and $\widetilde{V}|_{E\ot R^{\infty}({\widetilde{V}})}$ is unitary or  $\wV$ is one-to-one, then $(\sigma,V)$ admits the Wold-type decomposition (see \cite[Definition 3.1]{HS19}). 

For example, let $(\sigma,V)$ be a regular c.b.c. representation of $E$ on $\mathcal{H}$ which satisfy a growth condition (see \cite[Theorem 5.10]{AHS22}) such that $\wV^{\dagger}$ is a contraction, then $(\sigma,V)$ admits extended Wold-type decomposition. Further, generating wandering subspace is $\mathcal{E}:=\mathcal{H}\ominus \wV(E\ot \mathcal{H}).$

We recall a sufficient condition for bi-regular c.b.c. representations from \cite[Theorem 5.7]{AHS22}.
\begin{theorem}\label{DS2}
	Let $(\sigma,V)$ be a regular c.b.c. representation of $E$ on $\mathcal{H}$ and $R^{\infty}({\widetilde{V}})$ reduces $(\sigma,V).$ Then $(\sigma,V)$ is bi-regular.
\end{theorem}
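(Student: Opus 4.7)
The plan is to exploit the reducing property to block-diagonalize $\wV$ and $\wV^{\dagger}$, then verify the bi-regularity condition on each block. Set $\mathcal{H}_1:=R^{\infty}(\wV)$ and $\mathcal{H}_2:=\mathcal{H}\ominus R^{\infty}(\wV)$. Since $R^{\infty}(\wV)$ reduces $(\sigma,V)$, both subspaces are $\sigma(\mathcal{B})$-invariant and $V(\xi)$-invariant for every $\xi\in E$. Writing $\sigma=\sigma_1\oplus\sigma_2$ with $\sigma_i:=\sigma|_{\mathcal{H}_i}$, a direct inner-product computation shows that $E\ot_{\sigma}\mathcal{H}=(E\ot_{\sigma_1}\mathcal{H}_1)\oplus(E\ot_{\sigma_2}\mathcal{H}_2)$ orthogonally, and $\wV$ is block diagonal, $\wV=\wV_1\oplus\wV_2$, with $\wV_i:E\ot\mathcal{H}_i\to\mathcal{H}_i$. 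By the uniqueness of Moore--Penrose inverses (verifying the four defining identities blockwise), $\wV^{\dagger}=\wV_1^{\dagger}\oplus\wV_2^{\dagger}$, and hence $\wV^{\dagger(n)}=\wV_1^{\dagger(n)}\oplus\wV_2^{\dagger(n)}$ together with $I_{E^{\ot n}}\ot\wV^{\dagger}=(I_{E^{\ot n}}\ot\wV_1^{\dagger})\oplus(I_{E^{\ot n}}\ot\wV_2^{\dagger})$.

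Next I would handle the two blocks separately. On $\mathcal{H}_1$, Proposition \ref{d2}(ii) gives $\wV_1(E\ot\mathcal{H}_1)=\mathcal{H}_1$, so $\wV_1$ is surjective. Then $N(\wV_1^{\dagger})=N(\wV_1^{*})=R(\wV_1)^{\perp}=\{0\}$, whence $N(I_{E^{\ot n}}\ot\wV_1^{\dagger})=E^{\ot n}\ot N(\wV_1^{\dagger})=\{0\}\subseteq R(\wV_1^{\dagger(n)})$ trivially. On $\mathcal{H}_2$ I would run the regularity hypothesis in the reverse direction: $N(\wV)\subseteq E\ot R^{\infty}(\wV)=E\ot\mathcal{H}_1$, while $N(\wV)=N(\wV_1)\oplus N(\wV_2)$ with $N(\wV_2)\subseteq E\ot\mathcal{H}_2$ orthogonal to $E\ot\mathcal{H}_1$, forcing $N(\wV_2)=\{0\}$. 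Since $R(\wV)=\mathcal{H}_1\oplus R(\wV_2)$ is closed and $\mathcal{H}_1$ is closed, $R(\wV_2)$ is closed too. So $\wV_2$ is injective with closed range, $\wV_2^{\dagger}\wV_2=I$ on $E\ot\mathcal{H}_2$, making $\wV_2^{\dagger}$ surjective onto $E\ot\mathcal{H}_2$ with right inverse $\wV_2$. Tensoring with an identity preserves this right inverse, so each factor $I_{E^{\ot k}}\ot\wV_2^{\dagger}$ appearing in $\wV_2^{\dagger(n)}$ is surjective, giving $R(\wV_2^{\dagger(n)})=E^{\ot n}\ot\mathcal{H}_2\supseteq N(I_{E^{\ot n}}\ot\wV_2^{\dagger})$. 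Summing the two blockwise containments yields $N(I_{E^{\ot n}}\ot\wV^{\dagger})\subseteq R(\wV^{\dagger(n)})$ for every $n\in\mathbb{N}$, that is, $(\sigma,V)$ is bi-regular.

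The main obstacle I anticipate is administrative rather than conceptual: cleanly justifying that the interior tensor product $E\ot_{\sigma}\mathcal{H}$ splits orthogonally as $(E\ot_{\sigma_1}\mathcal{H}_1)\oplus(E\ot_{\sigma_2}\mathcal{H}_2)$ and that this splitting is compatible with $\wV$ and with its iterated Moore--Penrose inverse. Once the block decomposition is in place, the rest is essentially mechanical: surjectivity of $\wV_1$ trivializes the $\mathcal{H}_1$ block, and the regularity hypothesis sharpens $\wV_2$ into an injective operator with closed range, which in turn promotes $\wV_2^{\dagger}$ to an iterated surjection large enough to absorb $N(I_{E^{\ot n}}\ot\wV_2^{\dagger})$.
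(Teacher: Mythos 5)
Your argument is correct, but note that there is nothing in this paper to compare it with: Theorem \ref{DS2} is not proved here, it is recalled verbatim from \cite[Theorem 5.7]{AHS22}, so your proposal is judged as a self-contained proof. As such it works. The reducing hypothesis makes $P_{R^{\infty}(\wV)}$ commute with $\sigma(\mathcal{B})$, so $I_E\ot P_{R^{\infty}(\wV)}$ is a well-defined projection and $E\ot_{\sigma}\mathcal{H}$ splits orthogonally as $(E\ot \mathcal{H}_1)\oplus(E\ot \mathcal{H}_2)$ with $\wV$ block diagonal (beware only the clash of your block subscripts with the paper's $\wV_n$ for powers); Proposition \ref{d2}(ii) makes the $\mathcal{H}_1$-block surjective, and regularity ($N(\wV)\subseteq E\ot R^{\infty}(\wV)=E\ot\mathcal{H}_1$) forces the $\mathcal{H}_2$-block to be injective with range $R(\wV)\cap\mathcal{H}_2$, which is closed, exactly as you say; uniqueness of the Moore--Penrose inverse then gives the blockwise formula for $\wV^{\dagger}$ and $\wV^{\dagger(n)}$, and the containment $N(I_{E^{\ot n}}\ot\wV^{\dagger})\subseteq R(\wV^{\dagger(n)})$ follows block by block, which is precisely the definition of bi-regularity. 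Two points deserve explicit justification rather than being stated as automatic: (a) the identity $N(I_{E^{\ot n}}\ot T)=E^{\ot n}\ot N(T)$ is not a formal triviality for interior tensor products; for your $\mathcal{H}_1$-block it holds (and gives $\{0\}$) because that block's Moore--Penrose inverse is injective with closed range, hence bounded below, and then $(I_{E^{\ot n}}\ot T)^*(I_{E^{\ot n}}\ot T)=I_{E^{\ot n}}\ot(T^*T)\ge c^2 I$ shows $I_{E^{\ot n}}\ot T$ is bounded below as well; (b) the blockwise verification of the four Penrose identities presupposes that each block has closed range, so the closed-range and injectivity facts for the two blocks should be recorded before uniqueness is invoked --- a purely organizational fix. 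With these small additions your block-diagonalization argument is a complete and rather transparent proof of the statement that the paper only imports.
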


\begin{proposition}\label{1}
	Let $(\sigma,V)$ be a bi-regular c.b.c. representation of $E$ on $\mathcal H.$ Then $$\mathcal{H} = [ \mathcal{E}]_{\widetilde{V}} \oplus R^{\infty}({\widetilde{V}'})=[\mathcal{E}]_{\wt{V}'} \oplus R^{\infty}({\widetilde{V}}).$$
\end{proposition}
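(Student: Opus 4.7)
The plan is to reduce the proposition to a single decomposition via a $\wV\leftrightarrow\wV'$ symmetry and then establish it by identifying the orthogonal complement of $[\mathcal{E}]_{\wV}$.

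First, Proposition \ref{R1}(4) yields $R(\wV)=R(\wV^{\dagger*})=R(\wV')$, so the wandering subspace $\mathcal{E}=\mathcal{H}\ominus R(\wV)$ is common to both $\wV$ and $\wV'$. Combined with $(\wV')'=\wV$ (Proposition \ref{R1}(2)) and the identity $\wV'^{\dagger}=\wV^{*}$ (from $\wV'=\wV^{*\dagger}$ together with $(T^{\dagger})^{\dagger}=T$), Remark \ref{PDS1} applied to the regular $(\sigma,V)$ gives $N(I_{E^{\otimes n}}\otimes\wV^{*})\subseteq R(\wV_n^{*})$, which is precisely the bi-regularity condition for $(\sigma,V')$. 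Hence, once $\mathcal{H}=[\mathcal{E}]_{\wV}\oplus R^{\infty}(\wV')$ is proved, applying the same statement to $(\sigma,V')$ immediately yields $\mathcal{H}=[\mathcal{E}]_{\wV'}\oplus R^{\infty}(\wV)$; so it suffices to establish the first equality.

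To establish $\mathcal{H}=[\mathcal{E}]_{\wV}\oplus R^{\infty}(\wV')$, I would show the orthogonal-complement equality $[\mathcal{E}]_{\wV}^{\perp}=R^{\infty}(\wV')$. Taking adjoints gives the characterization
\[
[\mathcal{E}]_{\wV}^{\perp}=\{h\in\mathcal{H}: \wV_k^{*}h\in E^{\otimes k}\otimes R(\wV)\ \text{for all}\ k\ge 0\}.
\]
For the inclusion $(\subseteq)$, I would induct on $k$ to prove $\wV'_k\wV_k^{*}h=h$ for every such $h$, which forces $h\in R(\wV'_k)$ for every $k$ and hence $h\in R^{\infty}(\wV')$. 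The base case is $\wV'\wV^{*}h=P_{R(\wV)}h=h$ (Proposition \ref{R1}(6)). For the inductive step, the factorization $\wV'_{k+1}\wV_{k+1}^{*}=\wV'(I_E\otimes\wV'_k\wV_k^{*})\wV^{*}$, combined with the fact that $V(\xi)[\mathcal{E}]_{\wV}\subseteq[\mathcal{E}]_{\wV}$ (so $\wV^{*}h\in E\otimes[\mathcal{E}]_{\wV}^{\perp}$), lets the induction hypothesis act on the second tensor factor, reducing the computation to $\wV'\wV^{*}h=h$.

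For the reverse inclusion $(\supseteq)$, I would show inductively that $h\in R^{\infty}(\wV')$ implies $\wV_k^{*}h\in E^{\otimes k}\otimes R(\wV)$ for all $k$. In the base case $k=1$, writing $h=\wV'(\mu)$ with $\mu\in E\otimes R(\wV)$ (possible because $h\in R(\wV'_2)$) gives $\wV^{*}h=P_{N(\wV)^{\perp}}\mu\in E\otimes R(\wV)$ by $N(\wV)\subseteq E\otimes R(\wV)$. The inductive step invokes bi-regularity, equivalent after taking adjoints to $N(\wV'_n)\subseteq E^{\otimes n}\otimes R(\wV)$, to modify preimages of $h$ under $\wV'_n$ into elements of $E^{\otimes n}\otimes R(\wV)$, propagating the tensor-subspace condition. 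The main obstacle is precisely this reverse inclusion: the operator $\wV_n^{*}\wV'_n$ does not collapse to a projection, so one cannot pass directly from $h=\wV'_n\gamma$ to control over $\wV_n^{*}h$. Bi-regularity supplies the needed slack, allowing $\gamma$ to be shifted by an element of $N(\wV'_n)\subseteq E^{\otimes n}\otimes R(\wV)$ into the correct tensor subspace and closing the induction.
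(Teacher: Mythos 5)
Your symmetry reduction and your forward inclusion $[\mathcal{E}]_{\wV}^{\perp}\subseteq R^{\infty}(\wV')$ are sound: the induction showing $\wV'_k\wV_k^{*}g=g$ for every $g\in[\mathcal{E}]_{\wV}^{\perp}$ works, using $\wV'\wV^{*}=P_{R(\wV)}$ and the $\wV$- and $\sigma$-invariance of $[\mathcal{E}]_{\wV}$ (for the reduction, note also that bi-regularity of $(\sigma,V')$ requires, besides Remark \ref{PDS1}, the regularity of $\wV'$ itself, which comes from regularity of $\wV^{\dagger}$). The genuine gap is in the inductive step of the reverse inclusion, the very step you flag as the main obstacle. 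The condition you propagate, $\wV_k^{*}h\in E^{\ot k}\ot R(\wV)$, does not propagate: writing $\wV_{k+1}^{*}h=(I_{E^{\ot k}}\ot\wV^{*})\wV_k^{*}h$, you would need $\wV^{*}(R(\wV))\subseteq E\ot R(\wV)$, which fails in general; and choosing a preimage $\gamma\in E^{\ot n}\ot R(\wV)$ with $h=\wV'_n\gamma$ (which, incidentally, needs no kernel-shifting: it already follows from $h\in R(\wV'_{n+1})$ and $R(\wV')=R(\wV)$) does not close the induction either, because one still has to know how $\wV_n^{*}\wV'_n$ --- which, as you observe, is not a projection --- acts on $E^{\ot n}\ot R(\wV)$. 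Shifting $\gamma$ by elements of $N(\wV'_n)\subseteq E^{\ot n}\ot R(\wV)$ gives no additional control over $\wV_n^{*}h$, so the induction as described does not close.

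The fix is to propagate the stronger condition $\wV_k^{*}h\in E^{\ot k}\ot R^{\infty}(\wV')$. Since $\wV'\wV^{*}\wV'=\wV'$ and $\wV^{*}\wV'\wV^{*}=\wV^{*}$, the map $\wV^{*}$ is a generalized inverse of the regular representation $(\sigma,V')$, so Proposition \ref{d2}(iii) gives $\wV^{*}(R^{\infty}(\wV'))\subseteq E\ot R^{\infty}(\wV')$; the recursion $\wV_{k+1}^{*}=(I_{E^{\ot k}}\ot\wV^{*})\wV_k^{*}$ then yields $\wV_k^{*}h\in E^{\ot k}\ot R^{\infty}(\wV')\subseteq E^{\ot k}\ot R(\wV)$ for all $k$, i.e.\ $h\perp\wV_k(E^{\ot k}\ot\mathcal{E})$, which is exactly the reverse inclusion. (This is the same mechanism the paper uses later in the implication $(ii)\Rightarrow(iv)$ of its Wold-type theorem.) For comparison, the paper's own proof of the proposition is a two-line citation: since regularity of $\wV^{\dagger}$ makes $\wV'=\wV^{\dagger*}$ regular, it applies \cite[Corollary 5.6]{AHS22} to both $(\sigma,V)$ and $(\sigma,V')$, together with $\mathcal{E}=R(\wV')^{\perp}$ and $\wV''=\wV$; your argument is, in effect, an attempted direct re-proof of that corollary, and with the correction above it becomes a self-contained one.
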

\begin{proof}
	Since $\wV^{\dagger}$ is regular, $\wV'=\wV^{\dagger *}$ is also regular. We know that $\mathcal{E}=R(\wV')^{\perp}$ and $\wV''=\wV,$ then from \cite[Corollary 5.6]{AHS22}, we get
	\[R^{\infty}(\widetilde{V})^{\perp} = [\mathcal{E}]_{\wV'}\quad and \quad R^{\infty}(\wt{V}')^{\perp} = [ \mathcal{E}]_{\wV}. \qedhere\]
\end{proof}

\begin{corollary}\label{DS3}
	Let $(\sigma,V)$ be a c.b.c. representation of $E$ on $\mathcal{H}.$ Let $\wV$ be a left-invertible and $L$ be its left inverse defined by $L=(\wV^*\wV)^{-1}\wV^*.$ Then $$\mathcal{H} = [ \mathcal{E}]_{\widetilde{V}} \oplus R^{\infty}(L^*)=[\mathcal{E}]_{L^*} \oplus R^{\infty}({\widetilde{V}}).$$
\end{corollary}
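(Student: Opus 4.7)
The plan is to deduce this corollary directly from Proposition \ref{1} by verifying its hypothesis and then identifying $L^*$ with the Cauchy dual $\wV'$ of $(\sigma,V)$.

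First, I would invoke the remark preceding the definition of bi-regular, which explicitly notes that every left-invertible c.b.c. representation is bi-regular: when $\wV$ is left-invertible, $\wV^*\wV$ is invertible, so the Moore--Penrose inverse coincides with the classical left inverse, $\wV^{\dagger}=(\wV^*\wV)^{-1}\wV^*=L$, and both $\wV$ and $\wV^{\dagger}$ are regular. Hence the hypothesis of Proposition \ref{1} is satisfied by $(\sigma,V)$.

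Next, I would identify $L^*$ with $\wV'$. By Proposition \ref{R1}(1), $\wV'=\wV^{*\dagger}$, and by Proposition \ref{R1}(8), $\wV^{*\dagger}=(\wV^{\dagger})^*$. Combined with $\wV^{\dagger}=L$, this gives $\wV'=L^*$. Consequently $R^{\infty}(\wV')=R^{\infty}(L^*)$ and $[\mathcal{E}]_{\wV'}=[\mathcal{E}]_{L^*}$. Here the wandering subspace $\mathcal{E}=R(\wV)^{\perp}$ is the common one for $\wV$ and $L^*$, since Proposition \ref{R1}(6) yields $\wV\wV'^*=P_{R(\wV)}$ and $\wV'\wV^*=P_{R(\wV')}$, forcing $R(\wV)=R(\wV')=R(L^*)$.

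Substituting these identifications into the conclusion of Proposition \ref{1} yields the two decompositions claimed in the corollary. No essential obstacle arises; all of the substantive content lives in Proposition \ref{1}, and this corollary is its translation to the left-invertible setting, where the abstract Cauchy dual $\wV'$ simplifies to the concrete adjoint $L^*$ of the explicit left inverse.
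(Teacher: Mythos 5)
Your proposal is correct and follows exactly the route the paper intends: the paper states this corollary without proof, as an immediate consequence of Proposition \ref{1}, using the facts (already recorded in the text) that a left-invertible $\wV$ is bi-regular with $\wV^{\dagger}=(\wV^*\wV)^{-1}\wV^*=L$ and that the Cauchy dual simplifies to $\wV'=\wV(\wV^*\wV)^{-1}=L^*$. Your identification of $L^*$ with $\wV'$ via Proposition \ref{R1} is just a slightly more roundabout way of stating the same substitution, so there is nothing essentially different here.
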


\begin{corollary}\label{DS4}
	Let $(\sigma,V)$ be a regular c.b.c. representation of $E$ on $\mathcal{H}$ and $R^{\infty}({\widetilde{V}})$ reduces $(\sigma,V).$ Then  $$\mathcal{H} = [ \mathcal{E}]_{\widetilde{V}} \oplus R^{\infty}({\widetilde{V}'})=[\mathcal{E}]_{\wt{V}'} \oplus R^{\infty}({\widetilde{V}}).$$
\end{corollary}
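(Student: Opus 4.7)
The plan is to obtain Corollary \ref{DS4} as an immediate consequence of the two results stated immediately before it; no new work is required beyond chaining them together, so the main task is simply to identify the right implication and record it cleanly.

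First I would invoke Theorem \ref{DS2}: the hypotheses of Corollary \ref{DS4} are exactly that $(\sigma,V)$ is a regular c.b.c. representation of $E$ on $\mathcal{H}$ such that $R^{\infty}(\widetilde{V})$ reduces $(\sigma,V)$, which is the hypothesis of Theorem \ref{DS2}. That theorem concludes that $(\sigma,V)$ is bi-regular, i.e.\ $\widetilde{V}^{\dagger}$ is regular in the sense of Section 2.

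Next I would feed this conclusion into Proposition \ref{1}. Since $(\sigma,V)$ is now known to be bi-regular, Proposition \ref{1} applies verbatim and yields
\[
\mathcal{H}=[\mathcal{E}]_{\widetilde{V}}\oplus R^{\infty}(\widetilde{V}')=[\mathcal{E}]_{\widetilde{V}'}\oplus R^{\infty}(\widetilde{V}),
\]
which is the statement of the corollary. There is no genuine obstacle here: the content is entirely in Theorem \ref{DS2} (which turns the reducibility hypothesis on $R^{\infty}(\widetilde{V})$ into bi-regularity, via \cite{AHS22}) and in Proposition \ref{1} (whose proof uses the regularity of $\widetilde{V}'=\widetilde{V}^{\dagger *}$ together with $\widetilde{V}''=\widetilde{V}$ and \cite[Corollary 5.6]{AHS22}). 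The only thing worth double-checking while writing it up is that $\mathcal{E}=\mathcal{H}\ominus\widetilde{V}(E\otimes\mathcal{H})=R(\widetilde{V}')^{\perp}$, i.e.\ that the wandering subspace used in both sides of the decomposition is literally the same subspace; this follows from $R(\widetilde{V}')=R(\widetilde{V}^{\dagger *})=R(\widetilde{V})$ (Proposition in Section 2, item 4, combined with Proposition \ref{R1}(1)), so the two displayed decompositions are truly symmetric in $\widetilde{V}$ and $\widetilde{V}'$.
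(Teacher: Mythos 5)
Your proposal is correct and matches the paper's intended argument exactly: Corollary \ref{DS4} is obtained by applying Theorem \ref{DS2} to get bi-regularity and then invoking Proposition \ref{1}, precisely as you do (the paper leaves this chaining implicit, and its Corollary \ref{DS5} cites these same two results). Your added check that $\mathcal{E}=R(\widetilde{V}')^{\perp}=R(\widetilde{V})^{\perp}$ via $R(\widetilde{V}^{\dagger *})=R(\widetilde{V})$ is also consistent with what the paper uses in the proof of Proposition \ref{1}.
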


\begin{corollary}\label{DS5}
	Let $(\sigma,V)$ be a regular c.b.c. representation of $E$ on $\mathcal{H}$. Suppose that $R^{\infty}({\widetilde{V}})$ reduces $(\sigma,V)$ and $R^{\infty}({\widetilde{V}'})$ reduces $(\sigma,V').$ Then $[ \mathcal{E}]_{\widetilde{V}}=[\mathcal{E}]_{\wt{V}'}$ and $R^{\infty}({\widetilde{V}'})=R^{\infty}({\widetilde{V}}).$ Moreover,   $\mathcal{H} = [ \mathcal{E}]_{\widetilde{V}} \oplus R^{\infty}({\widetilde{V}}).$
\end{corollary}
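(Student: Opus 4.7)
The plan is to start from the two-sided Wold-type decomposition already supplied by Corollary \ref{DS4} and to leverage the two reducing hypotheses to collapse its two right-hand sides to the same pair of summands. Since $R^{\infty}(\wV)$ reduces $(\sigma,V)$, Corollary \ref{DS4} immediately produces
$$\mathcal{H} = [\mathcal{E}]_{\wV} \oplus R^{\infty}(\wV') = [\mathcal{E}]_{\wV'} \oplus R^{\infty}(\wV).$$
The main task is then to prove $R^{\infty}(\wV)=R^{\infty}(\wV')$; the equality $[\mathcal{E}]_{\wV}=[\mathcal{E}]_{\wV'}$ and the final decomposition $\mathcal{H}=[\mathcal{E}]_{\wV}\oplus R^{\infty}(\wV)$ then follow immediately by taking orthogonal complements in the two splittings and substituting.

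The crucial intermediate step I would establish is the pair of orthogonality relations $[\mathcal{E}]_{\wV}\perp R^{\infty}(\wV)$ and $[\mathcal{E}]_{\wV'}\perp R^{\infty}(\wV')$. For the first, note that $\mathcal{E}=\mathcal{H}\ominus R(\wV)$ and $R^{\infty}(\wV)\subseteq R(\wV)$ already give $\mathcal{E}\subseteq R^{\infty}(\wV)^{\perp}$. The reducing hypothesis on $R^{\infty}(\wV)$ says that $R^{\infty}(\wV)^{\perp}$ is $\sigma(\mathcal{B})$-invariant and invariant under every $V(\xi)$; iterating this, $\wV_n(E^{\otimes n}\otimes \mathcal{E})\subseteq R^{\infty}(\wV)^{\perp}$ for all $n\ge 0$, and therefore $[\mathcal{E}]_{\wV}\subseteq R^{\infty}(\wV)^{\perp}$. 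The second orthogonality follows by running the identical argument for $(\sigma,V')$, after using the Moore-Penrose properties recorded in Proposition \ref{R1} (in particular $R(\wV^{\dagger *})=R(\wV)$) to identify $R(\wV')=R(\wV)$, so that the wandering subspace for $(\sigma,V')$ coincides with $\mathcal{E}$ and the hypothesis that $R^{\infty}(\wV')$ reduces $(\sigma,V')$ plays exactly the same role as before.

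With these two orthogonalities, the remainder is a short Hilbert-space argument. Given $x\in R^{\infty}(\wV)$, decompose $x=a+b$ with $a\in [\mathcal{E}]_{\wV}$ and $b\in R^{\infty}(\wV')$ using the first splitting; the orthogonality $x\perp a$ together with $a\perp b$ forces $a=0$, and hence $x=b\in R^{\infty}(\wV')$. This gives $R^{\infty}(\wV)\subseteq R^{\infty}(\wV')$, and the reverse inclusion is obtained by the symmetric argument applied to the second splitting together with $[\mathcal{E}]_{\wV'}\perp R^{\infty}(\wV')$. Substituting the resulting equality $R^{\infty}(\wV)=R^{\infty}(\wV')$ into the two decompositions then yields $[\mathcal{E}]_{\wV}=[\mathcal{E}]_{\wV'}$ as well as the closing direct sum $\mathcal{H}=[\mathcal{E}]_{\wV}\oplus R^{\infty}(\wV)$.

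The mildly delicate point is the orthogonality $[\mathcal{E}]_{\wV}\perp R^{\infty}(\wV)$: without the reducing hypothesis one only knows that $[\mathcal{E}]_{\wV}$ is $(\sigma,V)$-invariant and contains $\mathcal{E}$, which is not enough to force orthogonality to the generalized range. The reducing assumption is precisely what pushes each $\wV_n(E^{\otimes n}\otimes\mathcal{E})$ into $R^{\infty}(\wV)^{\perp}$, after which the rest of the argument is routine bookkeeping.
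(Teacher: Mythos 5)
Your argument is correct and follows essentially the paper's route: the paper's proof is just the citation of Theorem \ref{DS2} and Corollary \ref{DS4}, and your explicit orthogonality step --- $[\mathcal{E}]_{\widetilde{V}}\subseteq R^{\infty}(\widetilde{V})^{\perp}$ and $[\mathcal{E}]_{\widetilde{V}'}\subseteq R^{\infty}(\widetilde{V}')^{\perp}$ from the two reducing hypotheses, combined with the two splittings supplied by Corollary \ref{DS4} --- is precisely the detail the paper leaves implicit. In particular, your use of Proposition \ref{R1} to identify $R(\widetilde{V}')=R(\widetilde{V})$, so that both representations share the wandering subspace $\mathcal{E}$, is the right justification, and no step is missing.
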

\begin{proof}
	It follows from Theorem \ref{DS2} and Corollary \ref{DS4}.
\end{proof}

\begin{remark}\label{123}
	\begin{enumerate}
		\item Let $(\sigma,V)$ be a regular c.b.c. representation of $E$ on $\mathcal{H}.$ If $\wV^*=\wV^{\dagger}$ on $R^{\infty}({\widetilde{V}})$, then $R^{\infty}({\widetilde{V}})$ reduces  $(\sigma,V).$ Thus $(\sigma,V)$ is bi-regular. It is easy to verify that the equality $\wV^*=\wV^{\dagger}$ on $R^{\infty}({\widetilde{V}})$ is equivalent to each one of the following:
		\begin{itemize}
			\item [(i)] $\wV\wV^*P_{R^{\infty}({\widetilde{V}})}=P_{R^{\infty}({\widetilde{V}})};$ 
			\item [(ii)] $P_{R^{\infty}({\widetilde{V}})}\wV\wV^*=P_{R^{\infty}({\widetilde{V}})}\wV\wV^{\dagger};$
			\item [(iii)] $\wV^*P_{R^{\infty}({\widetilde{V}})}=\wV^{\dagger}P_{R^{\infty}({\widetilde{V}})};$
			\item [(iv)]
			$\wV^*\wV P_{E\ot R^{\infty}({\widetilde{V}})}=\wV^{\dagger}\wV P_{E \ot R^{\infty}({\widetilde{V}})};$
			\item [(v)]	
			$\widetilde{V}\vert_{({E \otimes R^{\infty}{(\widetilde{V}))}\cap N({\widetilde{V})}}^\perp}:({E \otimes R^{\infty}{(\widetilde{V})})\cap N({\widetilde{V})^{\perp}}}\rightarrow R^\infty{(\widetilde{V})}$ is a unitary map.
		\end{itemize}
		\item Under any of the assumptions $(i)$-$(v),$ we have $$(\widetilde{V}|_{E \ot {R^{\infty}({\wV})}})^{\dagger} = \widetilde{V}^{\dagger}|_{R^{\infty}({\wV})} = \widetilde{V}^{*}|_{R^{\infty}({\wV})} = (\widetilde{V}|_{E \ot {R^{\infty}({\wV})}})^{*}.$$
	\end{enumerate} 
\end{remark}

Now, we discuss the relation between $\wV'$ and $\wV$ in terms of the extended Wold-type decomposition. 
\begin{proposition}
	Let $(\sigma,V)$ be a bi-regular c.b.c. representation of $E$ on $\mathcal H.$ Then $(\sigma,V)$ admits the extended Wold-type decomposition if and only if $(\sigma,V')$ admits it. In this case, we have $$R^{\infty}({\widetilde{V}})=R^{\infty}(\wt{V}') \quad and \quad [\mathcal{E}]_{\wV}=[\mathcal{E}]_{\wt{V}'}.$$
\end{proposition}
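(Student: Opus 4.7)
The plan is to exploit Proposition~\ref{1}, which for a bi-regular c.b.c.\ representation yields the two parallel decompositions
\[
\mathcal{H}=[\mathcal{E}]_{\wV}\oplus R^{\infty}(\wt{V}')=[\mathcal{E}]_{\wt{V}'}\oplus R^{\infty}(\wV),
\]
together with the involution $\wt{V}''=\wV$ from Proposition~\ref{R1}(2), and the equality $\mathcal{E}=R(\wV)^{\perp}=R(\wt{V}')^{\perp}$ (which follows from $R(\wt{V}')=R(\wV^{*\dagger})=N(\wV^{*})^{\perp}=\overline{R(\wV)}$). Thus the wandering subspace coincides for $(\sigma,V)$ and $(\sigma,V')$, and the Cauchy-dual construction is involutive, which is what will let the implication and its converse be treated uniformly.

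For the forward direction, I assume $(\sigma,V)$ admits the extended Wold-type decomposition, so $\mathcal{H}=[\mathcal{E}]_{\wV}\oplus R^{\infty}(\wV)$ with both summands $(\sigma,V)$-reducing and $\wV$ restricted to $(E\otimes R^{\infty}(\wV))\cap N(\wV)^{\perp}$ unitary onto $R^{\infty}(\wV)$. Comparing this direct sum to the two above and using uniqueness of the orthogonal complement immediately forces $R^{\infty}(\wV)=R^{\infty}(\wt{V}')$ and $[\mathcal{E}]_{\wV}=[\mathcal{E}]_{\wt{V}'}$, and in particular $\mathcal{H}=[\mathcal{E}]_{\wt{V}'}\oplus R^{\infty}(\wt{V}')$. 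To finish, I need to transfer the reducing property of the summands and the unitary condition to $\wt{V}'$. For this I invoke Remark~\ref{123}: the unitary condition (item (v)) for $\wV$ is equivalent to $\wV^{*}=\wV^{\dagger}$ on $R^{\infty}(\wV)$, and forces $R^{\infty}(\wV)$ to reduce $(\sigma,V)$. Writing $A=\wV|_{E\otimes R^{\infty}(\wV)}$, Remark~\ref{123}(2) then gives $A^{\dagger}=A^{*}$, so $A$ is a partial isometry and $A^{*\dagger}=A$. Because $R^{\infty}(\wV)$ reduces $(\sigma,V)$, the Moore--Penrose inverse $\wV^{*\dagger}$ respects the orthogonal decomposition $\mathcal{H}=R^{\infty}(\wV)\oplus R^{\infty}(\wV)^{\perp}$, so $\wt{V}'|_{E\otimes R^{\infty}(\wV)}=A^{*\dagger}=\wV|_{E\otimes R^{\infty}(\wV)}$. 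Combined with $N(\wt{V}')=N(\wV)$ from Proposition~\ref{R1}, this identifies the restriction of $\wt{V}'$ to $(E\otimes R^{\infty}(\wt{V}'))\cap N(\wt{V}')^{\perp}$ with the unitary restriction of $\wV$, so the unitary condition passes to $\wt{V}'$; the $(\sigma,V')$-reducing property of both summands then follows from the same identification.

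The converse is obtained by swapping the roles of $\wV$ and $\wt{V}'$. This is legitimate because $(\sigma,V)$ bi-regular implies $(\sigma,V')$ bi-regular: the regularity of $\wV^{\dagger}$ combined with Remark~\ref{PDS1} entails the regularity of $\wt{V}'=\wV^{*\dagger}=\wV^{\dagger*}$, while $(\wt{V}')^{\dagger}=\wV^{*}$ is regular by the same remark. The main technical obstacle I anticipate is the identification $\wt{V}'|_{E\otimes R^{\infty}(\wV)}=\wV|_{E\otimes R^{\infty}(\wV)}$, which requires carefully leveraging the reducing structure so that the Moore--Penrose inverse of the restricted operator matches the restriction of the global Moore--Penrose inverse; once this identification is secured, the transfer of both the unitarity and the reducing property is routine, and the two final equalities $R^{\infty}(\wV)=R^{\infty}(\wt{V}')$ and $[\mathcal{E}]_{\wV}=[\mathcal{E}]_{\wt{V}'}$ are already recorded in the forward step.
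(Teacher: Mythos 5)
Your proposal is correct and follows essentially the paper's own route: Proposition \ref{1} identifies the summands ($R^{\infty}(\wV)=R^{\infty}(\wt{V}')$, $[\mathcal{E}]_{\wV}=[\mathcal{E}]_{\wt{V}'}$), Remark \ref{123} together with the reducing property transfers the unitarity and reducing conditions to the Cauchy dual, and the converse is obtained from the involution $\wV''=\wV$. The only cosmetic difference is that you package the unitarity transfer as a Moore--Penrose-inverse-of-a-direct-sum identification $\wt{V}'|_{E\ot R^{\infty}(\wV)}=\wV|_{E\ot R^{\infty}(\wV)}$, whereas the paper verifies the same facts pointwise via $\wt{V}'\eta=\wt{V}'\wt{V}'^{\dagger}\wV\eta=\wV\eta$ and $\|\wt{V}'^{*}h\|=\|\wV^{\dagger}h\|=\|h\|$.
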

\begin{proof}
	Suppose $(\sigma,V)$ admits the extended Wold-type decomposition, then $\mathcal{H} = [ \mathcal{E}]_{\widetilde{V}} \oplus R^{\infty}({\widetilde{V}}).$ By Proposition \ref{1}, we get $R^{\infty}({\widetilde{V}})=R^{\infty}(\wt{V}')$ and  $[\mathcal{E}]_{\wV}=[\mathcal{E}]_{\wt{V}'}.$
	Since $\wV^{\dagger}$ is regular, $\wt{V}'=\wV^{\dagger *}$ is also regular. From Proposition \ref{d2}, we get $\wt{V}'(E\ot R^{\infty}(\wt{V}'))=R^{\infty}(\wt{V}').$ Let $h\in R^{\infty}(\wt{V}').$ Since $R^{\infty}({\widetilde{V}})=R^{\infty}(\wt{V}')$ and from Proposition \ref{d2} we have 
	\begin{align*}
		\wt{V}'^*h=\wV^{\dagger}h \in \wV^{\dagger}R^{\infty}(\widetilde{V}) \subseteq E \ot R^{\infty}(\widetilde{V})=E \ot R^{\infty}(\wt{V}').
	\end{align*} Therefore $R^{\infty}(\wt{V}')$ reduces $(\sigma,V')$. Next, we want to prove that $$\wt{V}'\vert_{({E \otimes R^{\infty}{(\wt{V}')})\cap N(\wt{V}')}^\perp}:({{E \otimes R^{\infty}{(\wt{V}')})\cap N(\wt{V}')}^\perp}\rightarrow  R^{\infty}(\wt{V}')$$ is a unitary. Let $\eta\in  {{E \otimes R^{\infty}{(\wt{V}')}\cap N(\wt{V}')}^\perp}={{E \otimes R^{\infty}{(\widetilde{V})}\cap N(\widetilde{V})}^\perp}.$ Since $(\sigma,V)$ admits the extended Wold-type decomposition,	$\widetilde{V}\vert_{({E \otimes R^{\infty}{(\widetilde{V})})\cap N({\widetilde{V})}}^\perp}:({E \otimes R^{\infty}{(\widetilde{V})})\cap N({\widetilde{V})^{\perp}}}\rightarrow R^\infty{(\widetilde{V})}$ is a unitary, that is, $\wV^*\wV \eta=\eta.$  If $\wV^*\wV=\wt{V}'^{\dagger}\wV$  and $\wt{V}'\wt{V}'^{\dagger}\wV=\wV,$ then $\wt{V}'\eta=\wt{V}'\wt{V}'^{\dagger}\wV \eta=\wV \eta,$ and hence $\|\wt{V}'\eta\|=\|\wV \eta\|=\|\eta\|$ for every  $\eta\in  ({{E \otimes R^{\infty}{(\wt{V}')})\cap N(\wt{V}')}^\perp}.$ Let $h\in R^{\infty}{(\wt{V}')}=R^{\infty}{(\widetilde{V})},$ by Remark \ref{123}, we get $\wV^*h=\wV^{\dagger}h.$ Therefore $$\|\wt{V}'^*h\|=\|\wV^{\dagger}h\|=\|\wV^*h\|=\|h\|.$$ Thus $\wt{V}'\vert_{({E \otimes R^{\infty}{(\wt{V}')})\cap N(\wt{V}')}^\perp}:{({E \otimes R^{\infty}{(\wt{V}')})\cap N(\wt{V}')}^\perp}\rightarrow  R^{\infty}(\wt{V}')$ is a unitary.
	
	Conversely, suppose that $(\sigma,V')$ admits the extended Wold-type decomposition, by the above discussion $(\sigma,V'')$ also admits the extended Wold-type decomposition. Since $\wV''=\wV,$ and from Lemma \ref{MSC}, $(\sigma,V)$ admits the extended Wold-type decomposition.
\end{proof}

\begin{remark}
	Suppose that $(\sigma,V)$ is a c.b.c. representation of $E$ on $\mathcal{H}$ such that $\wV$ is left-invertible, we get \cite[Corollary 3.9]{HS19} from the above Proposition.
\end{remark}

The following theorem is a generalization of \cite[Theorem 2.4.1]{DJS10} and \cite[Theorem 2]{EMZ21}:
\begin{theorem}
	Let $(\sigma,V)$ be a bi-regular c.b.c. representation of $E$ on $\mathcal H.$ Then the following statements are equivalent:
	\begin{enumerate}
		\item [(i)] $\mathcal{H} \neq [\mathcal{E}]_{\wV};$
		\item [(ii)] $R^{\infty}{(\wt{V}')} \neq \{0\};$
		\item [(iii)] there exists a non-zero closed subspace $\mathcal{M}$ such that $\mathcal{M}\subseteq \wt{V}' (E \ot \mathcal{M});$
		\item [(iv)] there exists a non-zero closed subspace $\mathcal{M}\subseteq R(\wV)$ such that $\wV^* \mathcal{M} \subseteq E\ot \mathcal{M};$
		\item [(v)] there exists a closed subspace $\mathcal{M} \supseteq \mathcal{E}, \mathcal{M}\neq \mathcal{H}$ such that $\wV(E\ot \mathcal{M})\subseteq \mathcal{M};$
		\item [(vi)] there exists a closed subspace $\mathcal{M} \supseteq \mathcal{E}, \mathcal{M}\neq \mathcal{H}$ such that $E \ot \mathcal{M} \subseteq \wV^{\dagger}\mathcal{M}.$
		\item [(vii)] there exist non-zero closed subspaces $\mathcal{M}_1,\mathcal{M}_2$ such that $\wV(E\ot \mathcal{M}_1)\subseteq \mathcal{M}_1, \wV^{\dagger}(\mathcal{M}_1)\subseteq E\ot (\mathcal{M}_1\oplus \mathcal{E}),P_{\mathcal{M}_2}\wV(E\ot \mathcal{M}_2)=\mathcal{M}_2$ and $R(\wV)=\mathcal{M}_1\oplus \mathcal{M}_2$, where $P_{\mathcal{M}_2}$ is the orthogonal projection of $\mathcal{H}$ onto $\mathcal{M}_2.$
	\end{enumerate}
\end{theorem}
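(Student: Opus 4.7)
My plan is to establish the seven equivalences by pivoting on condition (ii): once this is identified with the non-vanishing of $R^{\infty}(\wt{V}')$, the Moore–Penrose calculus of Proposition \ref{R1} and the regularity properties of Proposition \ref{d2} become available for $\wt{V}'$ as well, since $\wt{V}'$ is regular by Remark \ref{PDS1} together with the bi-regular hypothesis. The skeleton is a diamond around (ii) that picks up (iii) and (iv) directly, while (i), (v), (vi), (vii) are chained through the Wold-type decomposition of Proposition \ref{1}.

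The implication (i) $\Leftrightarrow$ (ii) is immediate from Proposition \ref{1}, since $\mathcal{H} = [\mathcal{E}]_{\wV}\oplus R^{\infty}(\wt{V}')$. To prove (ii) $\Rightarrow$ (iii) and (ii) $\Rightarrow$ (iv), I take $\mathcal{M} = R^{\infty}(\wt{V}')$, which is non-zero by (ii). Proposition \ref{d2}(ii) applied to $\wt{V}'$ yields $\wt{V}'(E\ot \mathcal{M}) = \mathcal{M}$, establishing (iii); Proposition \ref{d2}(iii) applied to $\wt{V}'$ with generalized inverse $\wt{V}'^{\dagger} = \wV^{*}$ (item 7 of Proposition \ref{R1}) gives $\wV^{*}\mathcal{M}\subseteq E\ot \mathcal{M}$, and $\mathcal{M}\subseteq R(\wt{V}') = R(\wV)$ (item 4 of Proposition \ref{R1}) delivers (iv). The converses (iii) $\Rightarrow$ (ii) and (iv) $\Rightarrow$ (ii) are inductions: from $\mathcal{M}\subseteq \wt{V}'(E\ot \mathcal{M})$, iterate the factorization $\wt{V}'_{n+1} = \wt{V}'(I_{E}\ot \wt{V}'_{n})$; from $\wV^{*}\mathcal{M}\subseteq E\ot \mathcal{M}$ with $\mathcal{M}\subseteq R(\wV)$, combine with $m = \wt{V}'(\wV^{*}m)$ for $m\in R(\wV)$ (which comes from $\wV\wt{V}'^{*} = P_{R(\wV)}$, item 6 of Proposition \ref{R1}) and iterate. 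Either way $\mathcal{M}\subseteq R(\wt{V}'_{n})$ for every $n$, whence $\mathcal{M}\subseteq R^{\infty}(\wt{V}')$ and (ii) follows.

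The equivalence (i) $\Leftrightarrow$ (v) is standard: take $\mathcal{M} = [\mathcal{E}]_{\wV}$ in the forward direction; for the converse, induction with $\wV_{n+1} = \wV(I_{E}\ot \wV_{n})$ shows $\wV_{n}(E^{\ot n}\ot \mathcal{E})\subseteq \mathcal{M}$ for every $n$, so $[\mathcal{E}]_{\wV}\subseteq \mathcal{M}\neq \mathcal{H}$. The implication (vi) $\Rightarrow$ (v) is immediate: apply $\wV$ to $E\ot \mathcal{M}\subseteq \wV^{\dagger}\mathcal{M}$ and use $\wV\wV^{\dagger} = P_{R(\wV)}$ (item 2 of Proposition \ref{R1}). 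For (v) $\Rightarrow$ (vi), I again take $\mathcal{M} = [\mathcal{E}]_{\wV}$: each $\eta\in E\ot \mathcal{M}$ lies in $\wV^{\dagger}\mathcal{M}$ provided $\eta\in N(\wV)^{\perp}$ and $\wV\eta\in \mathcal{M}$, since then $\eta = \wV^{\dagger}\wV\eta$. The second requirement is exactly (v); the orthogonality $N(\wV)\perp E\ot [\mathcal{E}]_{\wV}$ reduces, via $N(\wV)\subseteq E\ot R^{\infty}(\wV)$ from Theorem \ref{cc}, to showing $R^{\infty}(\wV)\subseteq R^{\infty}(\wt{V}') = [\mathcal{E}]_{\wV}^{\perp}$, which is the substantive use of bi-regularity and follows by running Proposition \ref{d2} on $\wt{V}'$ and comparing the two invariant cores inside $R(\wV)$.

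Finally, for (vii), set $\mathcal{M}_1 = [\mathcal{E}]_{\wV}\ominus \mathcal{E}$ and $\mathcal{M}_2 = R^{\infty}(\wt{V}')$; since $\mathcal{E} = R(\wV)^{\perp}$, Proposition \ref{1} gives $R(\wV) = \mathcal{M}_1\oplus \mathcal{M}_2$, and (i)–(ii) force both summands to be non-zero. The invariance $\wV(E\ot \mathcal{M}_1)\subseteq \mathcal{M}_1$ is (v) combined with $\wV(E\ot \mathcal{H})\subseteq R(\wV) = \mathcal{E}^{\perp}$; the identity $P_{\mathcal{M}_2}\wV(E\ot \mathcal{M}_2) = \mathcal{M}_2$ comes from $\wt{V}'(E\ot \mathcal{M}_2) = \mathcal{M}_2$ by writing $\wV$ on $E\ot \mathcal{M}_2$ in terms of $\wt{V}'$ modulo projection; and $\wV^{\dagger}\mathcal{M}_1\subseteq E\ot (\mathcal{M}_1\oplus \mathcal{E}) = E\ot [\mathcal{E}]_{\wV}$ is obtained by pushing $\wV^{\dagger}$ through the generating sum $\mathcal{M}_1 = \bigvee_{n\geq 1}\wV_{n}(E^{\ot n}\ot \mathcal{E})$ and invoking the orthogonality from the (v) $\Rightarrow$ (vi) step. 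The converse (vii) $\Rightarrow$ (i) is immediate, since $\mathcal{M}_1\oplus \mathcal{E}$ is a proper closed subspace witnessing (v). The principal obstacle I anticipate is exactly the bi-regular comparison $R^{\infty}(\wV)\subseteq R^{\infty}(\wt{V}')$ that underlies both the orthogonality for (v) $\Rightarrow$ (vi) and the preservation of $[\mathcal{E}]_{\wV}$ under $\wV^{\dagger}$ used in (vii); every remaining step is a routine application of the Moore–Penrose identities of Proposition \ref{R1} and the tensor factorizations of $\wV_{n}$ and $\wt{V}'_{n}$.
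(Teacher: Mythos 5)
Your skeleton for (i)$\Leftrightarrow$(ii)$\Leftrightarrow$(iii)$\Leftrightarrow$(iv) and (i)$\Leftrightarrow$(v) matches the paper and those steps are sound, but two of your steps have genuine gaps. The first is in (v)$\Rightarrow$(vi): you need $E\ot[\mathcal{E}]_{\wV}\subseteq N(\wV)^{\perp}$, and you reduce this, via $N(\wV)\subseteq E\ot R^{\infty}(\wV)$ (regularity of $\wV$), to the claim $R^{\infty}(\wV)\subseteq R^{\infty}(\wt{V}')=[\mathcal{E}]_{\wV}^{\perp}$, which you assert ``follows by running Proposition \ref{d2} on $\wt{V}'$ and comparing the two invariant cores.'' Proposition \ref{d2} gives no such comparison, and the inclusion is not a general fact about bi-regular representations: Proposition \ref{1} only yields the two separate decompositions $\mathcal{H}=[\mathcal{E}]_{\wV}\oplus R^{\infty}(\wt{V}')=[\mathcal{E}]_{\wt{V}'}\oplus R^{\infty}(\wV)$, and the identity $R^{\infty}(\wV)=R^{\infty}(\wt{V}')$ is proved in the paper only under the extra hypothesis that the extended Wold-type decomposition holds (it can fail, e.g.\ for the Cauchy dual of an analytic left-invertible representation without the wandering subspace property, where $R^{\infty}(\wV)\neq\{0\}$ but $R^{\infty}(\wt{V}')=\{0\}$). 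The correct use of bi-regularity is dual to yours: regularity of $\wV^{\dagger}$ (hence of $\wt{V}'$, with $N(\wt{V}')=N(\wV)$) gives $N(\wV)\subseteq E\ot R^{\infty}(\wt{V}')=E\ot[\mathcal{E}]_{\wV}^{\perp}$, equivalently $(E\ot R^{\infty}(\wt{V}'))^{\perp}\subseteq N(\wt{V}')^{\perp}=R(\wV^{\dagger})$, which is exactly how the paper obtains $E\ot[\mathcal{E}]_{\wV}\subseteq R(\wV^{\dagger})$ and then $E\ot\mathcal{M}=\wV^{\dagger}\wV(E\ot\mathcal{M})\subseteq\wV^{\dagger}\mathcal{M}$. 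The same repair applies where you ``push $\wV^{\dagger}$ through the generating sum'' in (vii); the clean route is to take adjoints in $\wt{V}'(E\ot R^{\infty}(\wt{V}'))\subseteq R^{\infty}(\wt{V}')$, which gives $\wV^{\dagger}(\mathcal{M}_1\oplus\mathcal{E})\subseteq E\ot(\mathcal{M}_1\oplus\mathcal{E})$ directly.

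The second gap is your closing claim that (vii)$\Rightarrow$(i) is ``immediate, since $\mathcal{M}_1\oplus\mathcal{E}$ is a proper closed subspace witnessing (v).'' Properness is indeed immediate (as $\mathcal{M}_2\neq\{0\}$ is orthogonal to $\mathcal{M}_1\oplus\mathcal{E}$), but invariance is not: you must show $\wV(E\ot\mathcal{E})\subseteq\mathcal{M}_1\oplus\mathcal{E}$, and a priori $\wV(E\ot\mathcal{E})$ only lies in $R(\wV)=\mathcal{M}_1\oplus\mathcal{M}_2$, so the $\mathcal{M}_2$-component must be killed. This is the longest part of the paper's proof: for $\eta\in E\ot\mathcal{E}$ one uses $P_{\mathcal{M}_2}\wV(E\ot\mathcal{M}_2)=\mathcal{M}_2$ to write $\wV\eta=h+\wV\eta_1$ with $h\in\mathcal{M}_1$, $\eta_1\in E\ot\mathcal{M}_2$, then uses regularity of $\wV^{\dagger}$ (so that $E\ot\mathcal{E}=E\ot N(\wV^{\dagger})\subseteq R(\wV^{\dagger})$ and $\wV^{\dagger}\wV\eta=\eta$), the hypothesis $\wV^{\dagger}(\mathcal{M}_1)\subseteq E\ot(\mathcal{M}_1\oplus\mathcal{E})$, and $N(\wV)\subseteq E\ot R(\wV)$ to force the $E\ot\mathcal{E}$-component of $\wV^{\dagger}\wV\eta_1$ to vanish and conclude $\wV\eta\in\mathcal{M}_1$. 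Without an argument of this kind, condition (vii) has not been tied back to the others, so as written your proposal does not establish the full equivalence.
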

\begin{proof}
	From Proposition \ref{1}, $(i) \Leftrightarrow (ii).$
	
	$(ii)\Rightarrow (iii)$ Let $\mathcal{M}=R^{\infty}{(\wt{V}')}\neq \{0\}.$ Since $\wt{V}'$ is regular, $R^{\infty}{(\wt{V}')}$ is closed, and by Proposition \ref{d2}, we have $\wt{V}'(E\ot R^{\infty}(\wt{V}'))=R^{\infty}(\wt{V}').$ Therefore $\mathcal{M}\subseteq \wt{V}'(E \ot \mathcal{M}).$
	
	$(iii)\Rightarrow (ii)$ Let $\mathcal{M}\subseteq \wt{V}'(E \ot \mathcal{M}),$ then $\mathcal{M} \subseteq \wV'_n(E^{\ot n}\ot \mathcal{M})$ for all $n\ge 0,$ and hence $\mathcal{M} \subseteq \bigcap_{n=0}^{\infty} \wV'_n(E^{\ot n}\ot \mathcal{M}).$ It follows that $\{0\}\neq \mathcal{M} \subseteq R^{\infty}{(\wt{V}')}.$
	
	$(ii)\Rightarrow (iv)$ Let $\mathcal{M}=R^{\infty}{(\wt{V}')}\neq \{0\},$ then $\mathcal{M}\subseteq R(\wV).$ Since $\wV'$ is regular and $\wV^*$ is the generalized inverse of $\wV',$ by Proposition \ref{d2} we get $\wV^* (R^{\infty}{(\wt{V}')})\subseteq E\ot R^{\infty}{(\wt{V}')}.$ 
	
	$(iv)\Rightarrow (iii)$ Since $\mathcal{M}\subseteq R(\wV)$ and $\wV'\wV^*=(\wV \wV^{\dagger})^*=\wV \wV^{\dagger}$ is the orthogonal projection of $\mathcal{H}$ onto $R(\wV),$ we get $\mathcal{M}=\wV'\wV^*\mathcal{M}\subseteq  \wV'(E\ot \mathcal{M}).$ 
	
	$(i)\Rightarrow (v)$ Suppose that $\mathcal{M}=[\mathcal{E}]_{\wV},$ then $\mathcal{E} \subseteq \mathcal{M} \neq \mathcal{H}$ and $\wV(E\ot \mathcal{M})\subseteq \mathcal{M}.$ 
	
	$(v)\Rightarrow (iv)$ Since $\mathcal{E}\subseteq \mathcal{M} \neq \mathcal{H},$ we have $\{0\}\neq {\mathcal{M}}^{\perp}\subseteq R(\wV).$ From $\wV(E\ot \mathcal{M})\subseteq \mathcal{M},$ we get $\wV^* \mathcal{M}^{\perp}\subseteq E \ot \mathcal{M}^{\perp}.$ 
	
	$(i)\Rightarrow (vi)$  Suppose that $\mathcal{M}=[\mathcal{E}]_{\wV},$ then $\mathcal{E} \subseteq \mathcal{M} \neq \mathcal{H}.$ Since $\wt{V}'$ is regular, we have $(E\ot R^{\infty}{(\wt{V}')})^{\perp}\subseteq N(\wt{V}')^{\perp}=R(\wV^{\dagger}).$ So from Proposition \ref{1}, $R^{\infty}{(\wt{V}')}^{\perp}=[\mathcal{E}]_{\wV},$ and hence $E\ot \mathcal{M} \subseteq R(\wV^{\dagger}).$ Since $\wV(E \ot \mathcal{M})\subseteq \mathcal{M}$ and $\wV^{\dagger}\wV=P_{R(\wV^{\dagger})},$ we get $E\ot \mathcal{M}=\wV^{\dagger}\wV(E\ot \mathcal{M})\subseteq \wV^{\dagger}\mathcal{M}.$

	$(vi)\Rightarrow (iii)$ Since $\mathcal{E}\subseteq \mathcal{M} \neq \mathcal{H},$ we have $\{0\}\neq {\mathcal{M}}^{\perp}\subseteq R(\wV).$ We want to prove that $\mathcal{M}^{\perp} \subseteq \wt{V}'(E \ot \mathcal{M}^{\perp}).$ Let $h\in \mathcal{M}^{\perp}\subseteq R(\wV)=R(\wV'),$ then there exists $\eta\in E\ot \mathcal{H}$ such that $h=\wV'\eta$ and $\eta=\eta_1 + \eta_2$ for some $\eta_1\in E\ot \mathcal{M}$ and $\eta_2\in E\ot \mathcal{M}^{\perp}.$ Suppose that $\eta_1\neq 0,$ then for every $m\in \mathcal{M},$ $0=\langle h,m \rangle =\langle \wV'(\eta_1+\eta_2),m \rangle=\langle  (\eta_1+\eta_2),\wV^{\dagger} m \rangle,$ and hence $\eta_1+\eta_2 \in (\wV^{\dagger}\mathcal{M})^{\perp}\subseteq E\ot \mathcal{M}^{\perp}.$ Since $\eta_1+\eta_2 \in E\ot \mathcal{M}^{\perp}$ and $\eta_2 \in E\ot \mathcal{M}^{\perp},$ we have $\eta_1\in E\ot \mathcal{M}^{\perp},$ and thus  $\eta_1=0$ which is a contradiction. It follows that $\mathcal{M}^{\perp} \subseteq \wt{V}'(E \ot \mathcal{M}^{\perp}).$

	$(i)\Rightarrow (vii)$ Let $\mathcal{M}_1=\bigvee_{n=1}\widetilde{V}_{n}(E^{\ot n}\ot \mathcal{E})$ and $\mathcal{M}_2=R^{\infty}{(\wt{V}')}.$ Clearly $\mathcal{M}_1,\mathcal{M}_2\neq \{0\}$ and by Proposition \ref{1}, $\mathcal{M}_1$ and $\mathcal{M}_2$ are orthogonal. If $\mathcal{E}\perp \widetilde{V}_{n}(E^{\ot n}\ot \mathcal{E})$ for every $n\ge 1,$  then
	\begin{align*}
		R(\wV)={\mathcal{E}}^{\perp}=([ \mathcal{E}]_{\widetilde{V}} \oplus R^{\infty}({\widetilde{V}'})) \ominus \mathcal{E}=\mathcal{M}_1\oplus \mathcal{M}_2.
	\end{align*} It is easy to verify that $\wV(E\ot \mathcal{M}_1)\subseteq \mathcal{M}_1.$ Since $\mathcal{M}_1\oplus \mathcal{E}=[\mathcal{E}]_{\wV},$ from Proposition \ref{1}, $(\mathcal{M}_1\oplus \mathcal{E})^{\perp}=R^{\infty}{(\wt{V}')}.$ Using Proposition \ref{d2}, since $\wV'$ is regular, $\wV'(E\ot R^{\infty}{(\wt{V}')}) \subseteq R^{\infty}{(\wt{V}')},$ equivalently, $\wV^{\dagger}(\mathcal{M}_1\oplus \mathcal{E})\subseteq E \ot (\mathcal{M}_1\oplus \mathcal{E}).$ It gives $\wV^{\dagger}(\mathcal{M}_1)\subseteq E \ot (\mathcal{M}_1\oplus \mathcal{E}).$ Next we want to prove that $P_{\mathcal{M}_2}\wV(E\ot \mathcal{M}_2)=\mathcal{M}_2.$ Clearly $P_{\mathcal{M}_2}\wV(E\ot \mathcal{M}_2) \subseteq \mathcal{M}_2,$ let $h\in \mathcal{M}_2=R^{\infty}{(\wt{V}')}\subseteq R(\wV),$ then there exists $\eta\in E\ot \mathcal{H}$ such that $$h=\wV \eta\quad and \quad \eta=\eta_1+\eta_2$$ for some $\eta_1\in E\ot [\mathcal{E}]_{\wV}$ and $\eta_2\in E \ot R^{\infty}{(\wt{V}')})=E \ot \mathcal{M}_2.$ Since $\wV \eta_1\in [\mathcal{E}]_{\wV}\perp \mathcal{M}_2,$ we have $h=P_{\mathcal{M}_2}h=P_{\mathcal{M}_2}\wV \eta_2\in P_{\mathcal{M}_2}\wV(E\ot \mathcal{M}_2).$ 
	
	$(vii)\Rightarrow (v)$ Suppose $(vii)$ holds. First, we want to prove that $\wV(E\ot \mathcal{E})\subseteq \mathcal{M}_1.$ Let $\eta\in E\ot \mathcal{E},$ since $\wV \eta \in R(\wV)=\mathcal{M}_1\oplus  \mathcal{M}_2,$ we have $\wV \eta=h_1+h_2,$ where $h_1 \in \mathcal{M}_1$ and $h_2\in \mathcal{M}_2=P_{\mathcal{M}_2}\wV(E\ot \mathcal{M}_2),$ and hence $h_2=P_{\mathcal{M}_2}\wV \eta_1$ for some $\eta_1\in E\ot \mathcal{M}_2.$ Using orthogonal decomposition of $R(\wV),$ we have $\wV \eta_1=h_3+h_2$ for some $h_3\in \mathcal{M}_1,$ and hence $\wV \eta=h+\wV \eta_1,$ where $h=h_1-h_3\in \mathcal{M}_1.$ It follows that $\wV^{\dagger}\wV \eta=\wV^{\dagger}h+\wV^{\dagger}\wV \eta_1.$ Since $\wV^{\dagger}$ is regular, $\eta\in E\ot \mathcal{E} = E\ot N(\wV^{\dagger})\subseteq R(\wV^{\dagger}),$ and thus $\wV^{\dagger}\wV \eta= \eta =\wV^{\dagger}h+\wV^{\dagger}\wV \eta_1.$ On the other side,  $\wV^{\dagger}h\in \wV^{\dagger}(\mathcal{M}_1)\subseteq E \ot (\mathcal{M}_1 \oplus \mathcal{E})=E \ot {\mathcal{M}_2}^{\perp}$ and $\eta\in E \ot \mathcal{E} \subseteq E \ot {\mathcal{M}_2}^{\perp}.$ This implies that $\wV^{\dagger}\wV \eta_1 \in E \ot {\mathcal{M}_2}^{\perp}=E \ot (\mathcal{M}_1 \oplus \mathcal{E}),$ then there exist $z_1\in E\ot \mathcal{M}_1$ and $z_2 \in E \ot \mathcal{E}$ such that $\wV^{\dagger}\wV \eta_1= z_1 + z_2.$  Now we apply $\wV,$ and we get $\wV \eta_1= \wV z_1 + \wV z_2,$ it gives $z_1 - \eta_1 + z_2 \in N({\wV}) \subseteq E \ot R(\wV).$ Since $z_1- \eta_1 \in E \ot (\mathcal{M}_1 \oplus \mathcal{M}_2)= E \ot R(\wV),$ we have $z_2 \in E \ot R(\wV).$ Therefore $z_2=0,$ and hence $\wV \eta_1= \wV z_1.$ Finally, we get $$\wV \eta = h + \wV z_1 \in \mathcal{M}_1.$$ Since $\wV(E\ot \mathcal{M}_1)\subseteq \mathcal{M}_1,$ we have $\wV(E\ot (\mathcal{M}_1 \oplus \mathcal{E}))\subseteq \mathcal{M}_1 \subseteq \mathcal{M}_1 \oplus \mathcal{E}.$ This implies that a closed subspace $\mathcal{M}:=\mathcal{M}_1 \oplus \mathcal{E}$ and $ \mathcal{M}\neq \mathcal{H}$ such that $\wV(E \ot \mathcal{M})\subseteq \mathcal{M}.$ 
\end{proof}

\section{Powers of Moore-Penrose inverse}

In this section, we will discuss the powers of Moore-Penrose inverse of c.b.c. representations. Suppose $\wV_n^{\dagger}$ is the Moore-Penrose inverse of $\wV_n.$ In general, the equation $\wV_n^{\dagger}=\widetilde{V}^{\dagger(n)},n\ge 2,$ is not true (cf. \cite[Example 4]{EMZ21}). We have the following definition:
\begin{definition}
	Let $(\sigma,V)$ be a c.b.c. representation of $E$ on $\mathcal{H}$ with closed range. For $n\ge 2,$ $(\sigma,V)$ is called {\rm $n$-dagger} if $\wV^{\dagger{(n)}}=\wV_n^{\dagger}.$ Moreover, if $(\sigma,V)$ is $n$-dagger for every $n\ge 2,$ then we say that $(\sigma,V)$ is {\rm hyper-dagger}.
\end{definition}

The next result present the regularity between $\wV$ and $\wV^{\dagger}.$
\begin{proposition}\label{DJ1}
	Let $(\sigma,V)$ be a hyper-dagger c.b.c. representation of $E$ on $\mathcal{H}.$ Then $(\sigma,V)$ is regular if and only if $\wV^{\dagger}$ is regular.
\end{proposition}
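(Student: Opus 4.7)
The plan is to reduce the regularity of $\wV^{\dagger}$ (in the sense appearing in the bi-regular definition) to the regularity of $\wV$ by a direct orthogonal-complement calculation, with the hyper-dagger hypothesis serving as the single bridge between the two notions. The whole argument is symmetric in $\wV$ and $\wV^{\dagger}$, so both implications of the equivalence come out of one and the same computation.

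First I would rewrite the two subspaces entering the defining inclusion
\[
N(I_{E^{\otimes n}} \otimes \wV^{\dagger}) \subseteq R(\wV^{\dagger(n)})
\]
for $\wV^{\dagger}$ being regular. Using $N(\wV^{\dagger}) = N(\wV^*) = R(\wV)^\perp$ from the listed basic properties of the Moore-Penrose inverse, the left-hand side is $E^{\otimes n} \otimes R(\wV)^\perp$. The hyper-dagger hypothesis $\wV^{\dagger(n)} = \wV_n^{\dagger}$, combined with $R(\wV_n^{\dagger}) = R(\wV_n^*) = N(\wV_n)^\perp$, identifies the right-hand side with $N(\wV_n)^\perp$. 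Hence $\wV^{\dagger}$ is regular if and only if
\[
E^{\otimes n} \otimes R(\wV)^\perp \subseteq N(\wV_n)^\perp \quad \text{for every } n \geq 1.
\]

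Second, since $R(\wV)$ is closed (which is implicit in the very existence of $\wV^{\dagger}$ as a bounded operator), one has $(E^{\otimes n} \otimes R(\wV)^\perp)^\perp = E^{\otimes n} \otimes R(\wV)$ by the standard orthogonal-complement rule for interior tensor products of Hilbert modules. Taking complements on both sides of the previous display turns the condition into
\[
N(\wV_n) \subseteq E^{\otimes n} \otimes R(\wV) \quad \text{for every } n \geq 1,
\]
which by Theorem \ref{cc} (equivalence of conditions $(1)$ and $(2)$) is precisely the regularity of $(\sigma, V)$. I do not expect any serious obstacle: the only genuine check is the orthogonal-complement rule for the interior tensor product just used, and the argument is structured so that the hyper-dagger hypothesis is invoked exactly once, at the step that converts the iterated range $R(\wV^{\dagger(n)})$ into the single-step object $N(\wV_n)^\perp$.
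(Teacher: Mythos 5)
Your argument is correct and is essentially the paper's own: identifying $R(\wV^{\dagger(n)})$ with $N(\wV_n)^{\perp}$ via the hyper-dagger hypothesis and the Moore--Penrose identities, and translating the kernel condition for $\wV^{\dagger}$ into $N(\wV_n)\subseteq E^{\otimes n}\otimes R(\wV)$ by passing to orthogonal complements, is exactly the computation the paper performs (with the complement step outsourced to Remark \ref{PDS1}), and your two-way equivalence chain merely makes the converse explicit where the paper appeals to $(\wV^{\dagger})^{\dagger}=\wV$. One small repair: $(E^{\otimes n}\otimes R(\wV)^{\perp})^{\perp}=E^{\otimes n}\otimes R(\wV)$ is not a consequence of closedness alone in an interior tensor product (it fails for a general closed subspace of $\mathcal{H}$); you should note that $R(\wV)$ is $\sigma(\mathcal{B})$-reducing, which follows from $\sigma(b)\wV=\wV(\phi(b)\otimes I_{\mathcal{H}})$ together with $\sigma$ being a $*$-representation, and this gives the orthogonal decomposition $E^{\otimes n}\otimes\mathcal{H}=(E^{\otimes n}\otimes R(\wV))\oplus(E^{\otimes n}\otimes R(\wV)^{\perp})$ that your complement-taking step uses.
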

\begin{proof}
	Suppose that $(\sigma,V)$ is regular, then $\wV^*$ is also regular. Since $R(\wV)$ is closed, $R(\wV^*)=R(\wV^{\dagger})$ is also closed, we get $E^{\ot n}\ot N(\wV^{\dagger})=E^{\ot n}\ot N(\wV^*)\subseteq R(\wV_n^*)=R(\wV_n^{\dagger})=R(\wV^{\dagger{(n)}})$ for every $n\ge1.$ Thus $\wV^{\dagger}$ is regular. On the other hand, since $(\wV^{\dagger})^{ \dagger}=\wV,$ by given hypothesis, $(\sigma,V)$ is regular.
\end{proof} 

We recall that, let $P$ and $Q$ be two orthogonal projections of $\mathcal{H}$ onto $R(P)$ and $R(Q),$ respectively. Then $PQ=QP=P$ if and only if $R(P)\subseteq R(Q).$ The following theorem is an analogue of \cite[Theorem 3]{EMZ21}:
\begin{theorem}
	Let $(\sigma,V)$ be a regular hyper-dagger c.b.c. representation of $E$ on $\mathcal{H}.$ Then
	\begin{enumerate}
		\item[(1)] $P_{\mathcal{E}}=I-\wV\wV^{\dagger}$ is the orthogonal projection of $\mathcal{H}$ onto $\mathcal{E}=R(\wV)^{\perp};$
		\item[(2)] $\wV_n\wV^{\dagger (n)}$ converges strongly to the orthogonal projection $P,$ where $P$ is onto $R^{\infty}{(\wt{V})};$ 
		\item[(3)] $\sum_{n=0}^{\infty}\wV_n(I_{E^{\ot n}}\ot P_{\mathcal{E}})\wV^{\dagger (n)}$ converges strongly to $Q:= I-P;$
		\item[(4)] $R(Q)=\{h\in \mathcal{H} :\quad \lim_{n\to \infty}\|\wV_n\wV^{\dagger (n)}h\|=0\};$
		\item[(5)] $R(P)$ and $R(Q)$ are $(\sigma,V)$-reducing.
	\end{enumerate}
\end{theorem}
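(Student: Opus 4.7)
The plan is to handle the five items in order, using the hyper-dagger identity $\wV^{\dagger(n)}=\wV_n^\dagger$ and the factorizations $\wV_{n+1}=\wV(I_E\otimes\wV_n)=\wV_n(I_{E^{\otimes n}}\otimes\wV)$ and $\wV^{\dagger(n+1)}=(I_{E^{\otimes n}}\otimes\wV^\dagger)\wV^{\dagger(n)}$ throughout. Item (1) falls out immediately from the Moore-Penrose identity $\wV\wV^\dagger=P_{R(\wV)}$ recorded before the theorem, so $I-\wV\wV^\dagger$ projects onto $R(\wV)^\perp=\mathcal{E}$. For (2), hyper-dagger recasts $\wV_n\wV^{\dagger(n)}$ as $\wV_n\wV_n^\dagger=P_{R(\wV_n)}$, and the factorization forces $R(\wV_{n+1})\subseteq R(\wV_n)$, so $\{P_{R(\wV_n)}\}$ is a decreasing sequence of orthogonal projections converging strongly to the projection onto $\bigcap_n R(\wV_n)=R^\infty(\wV)$, a set closed by Proposition~\ref{d2}(i).

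For (3), expanding $P_{\mathcal{E}}=I-\wV\wV^\dagger$ and applying the two factorizations gives
\[
\wV_n(I_{E^{\otimes n}}\otimes P_{\mathcal{E}})\wV^{\dagger(n)}=\wV_n\wV^{\dagger(n)}-\wV_{n+1}\wV^{\dagger(n+1)},
\]
so the partial sum from $0$ to $N-1$ telescopes to $I-\wV_N\wV^{\dagger(N)}$ (under the convention $\wV_0=I$), which converges strongly to $I-P=Q$ by (2). Part (4) is then the spectral observation that $\|\wV_n\wV^{\dagger(n)}h\|^2=\langle P_{R(\wV_n)}h,h\rangle\to\langle Ph,h\rangle=\|Ph\|^2$, so the limit vanishes precisely when $Ph=0$, i.e.\ when $h\in N(P)=R(Q)$.

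Item (5) is the main obstacle. The $\sigma(\mathcal{B})$-invariance of $R(P)$ and $R(Q)$ is painless: covariance gives $\sigma(b)R(\wV_n)\subseteq R(\wV_n)$, and replacing $b$ by $b^*$ shows that $\sigma(b)$ commutes with each $P_{R(\wV_n)}$, hence with $P$ and $Q$. The $V(\xi)$-invariance of $R(P)=R^\infty(\wV)$ is Proposition~\ref{d2}(ii). The delicate step is the $V(\xi)$-invariance of $R(Q)$, equivalently $\wV^*R^\infty(\wV)\subseteq E\otimes R^\infty(\wV)$. My approach is first to invoke Proposition~\ref{DJ1}—hyper-dagger plus regular forces bi-regular—so that Proposition~\ref{1} applies both to $(\sigma,V)$ and to $(\sigma,V')$ (the latter because $\wV'^\dagger=\wV^*$ is regular by Remark~\ref{PDS1}), yielding $\mathcal{H}=[\mathcal{E}]_{\wV'}\oplus R^\infty(\wV)$ and thereby identifying $R(Q)$ with $[\mathcal{E}]_{\wV'}$. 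I would then verify condition (v) of Remark~\ref{123}, which asks that $\wV$ restrict to a unitary from $(E\otimes R^\infty(\wV))\cap N(\wV)^\perp$ onto $R^\infty(\wV)$; bijectivity is automatic from Proposition~\ref{d2}(ii) together with injectivity of $\wV$ on $N(\wV)^\perp$, so the entire content is the isometric equality $\|\wV\eta\|=\|\eta\|$ on this restricted domain. I expect the hyper-dagger hypothesis, which pins $\wV_n\wV^{\dagger(n)}$ as a genuine projection at every level $n$, to be exactly what forces this isometric rigidity on the stable subspace in the limit, so that Remark~\ref{123}(2) then gives $\wV^*=\wV^\dagger$ on $R^\infty(\wV)$ and the reducing property for both $R(P)$ and $R(Q)$ follows at once.
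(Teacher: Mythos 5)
Your items (1)--(4) are correct and essentially the paper's argument: hyper-dagger identifies $\wV_n\wV^{\dagger(n)}$ with $P_{R(\wV_n)}$, the decreasing projections converge strongly to the projection $P$ onto $R^{\infty}(\wV)$, and the telescoping identity $\wV_n(I_{E^{\ot n}}\ot P_{\mathcal{E}})\wV^{\dagger(n)}=\wV_n\wV^{\dagger(n)}-\wV_{n+1}\wV^{\dagger(n+1)}$ together with $Q=I-P$ settles (3) and (4); omitting the paper's explicit verification that the summands are mutually orthogonal projections is harmless, since $Q=I-P$ is a projection anyway.

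The genuine gap is in item (5), exactly at the step you flag yourself: the $V(\xi)$-invariance of $R(Q)$, equivalently $\wV^{*}R^{\infty}(\wV)\subseteq E\ot R^{\infty}(\wV)$. Your plan is to verify condition (v) of Remark \ref{123} and you ``expect'' the hyper-dagger hypothesis to force $\wV$ to act isometrically from $(E\ot R^{\infty}(\wV))\cap N(\wV)^{\perp}$ onto $R^{\infty}(\wV)$. It does not: conditions (i)--(v) of Remark \ref{123} are an extra hypothesis, not a consequence of regularity plus hyper-dagger. Take $\mathcal{B}=E=\mathbb{C}$ and $\wV$ invertible but not unitary (say twice the bilateral shift): then $(\sigma,V)$ is regular and hyper-dagger (indeed $\wV^{\dagger(n)}=\wV_n^{-1}=\wV_n^{\dagger}$), $R^{\infty}(\wV)=\mathcal{H}$, yet $\|\wV\eta\|=2\|\eta\|$ on all of $E\ot\mathcal{H}$, so Remark \ref{123}(v) fails even though the theorem's conclusion holds trivially there ($Q=0$); also the claimed ``automatic bijectivity'' of the restriction is not automatic, since Proposition \ref{d2}(ii) does not guarantee that the $N(\wV)^{\perp}$-component of a vector in $E\ot R^{\infty}(\wV)$ stays in $E\ot R^{\infty}(\wV)$. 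Consequently the intended bridge from bi-regularity and Proposition \ref{1} to $\wV^{*}=\wV^{\dagger}$ on $R^{\infty}(\wV)$ cannot be built. The paper proves (5) differently and directly: writing $\eta=\wV^{\dagger}h+\xi$ with $\xi\in N(\wV)$, one computes $P_{m+1}\wV\eta=P_{m+1}h$ and $\wV(I_E\ot P_m)\eta=P_{m+1}h+\wV_{m+1}(I_E\ot\wV^{\dagger(m)})\xi$, and the extra term vanishes because regularity together with Proposition \ref{d3} (and induction) gives $(I_E\ot\wV^{\dagger(m)})N(\wV)\subseteq N(\wV_{m+1})$; letting $m\to\infty$ yields the intertwining $P\wV=\wV(I_E\ot P)$, hence also $Q\wV=\wV(I_E\ot Q)$, and both $R(P)$ and $R(Q)$ are $(\sigma,V)$-reducing. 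This intertwining argument is what your proposal is missing.
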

\begin{proof}
	$(1)$ It is trivial.
	
	$(2)$ Let $(\sigma,V)$ be a hyper-dagger, then $P_n:=\wV_n\wV^{\dagger (n)}$ is the orthogonal projection of $\mathcal{H}$ onto $R(\wV_n).$ Since $R(\wV_{n+1})\subseteq R(\wV_n),$ the sequence $(P_n)_{n\ge1}$ converges strongly to the orthogonal projection $P.$ We want to prove $R(P)=R^{\infty}{(\wt{V})}.$ Let $h\in R^{\infty}{(\wt{V})},$ then $P_nh=h$ for all $n\ge 1,$ and hence $Ph=h.$ On the other hand, let $i,j\ge 0$ and $h\in \mathcal{H},$ since $R(\wV_{i+j})\subseteq R(\wV_i),$ we have $P_iP_{i+j}h=P_{i+j}h.$ As $j\to \infty,$ we get $P_iPh=Ph$ for all $i\ge 0.$ It follows that $Ph\in R^{\infty}{(\wt{V})}.$ 
	
	$(3)$  Since $R(\wV_{n+1})\subseteq R(\wV_n),$ we define $Q_n:=P_n-P_{n+1}=\wV_n(I_{E^{\ot n}}\ot P_{\mathcal{E}})\wV^{\dagger (n)}.$ It is easy to verify that $Q_n^*=Q_n$ and $R(Q_n)=R(\wV_{n})\cap R(\wV_{n+1})^{\perp}.$ Since $(\sigma,V)$ is regular, $\wV^*$ is also regular. It gives $E^{\ot n}\ot \mathcal{E}=E^{\ot n}\ot R(\wV)^{\perp}=E^{\ot n}\ot N(\wV^*)\subseteq R(\wV_n^*),$ and thus \begin{align*}
		Q_n^2&=\wV_n(I_{E^{\ot n}}\ot P_{\mathcal{E}})\wV^{\dagger (n)}\wV_n(I_{E^{\ot n}}\ot P_{\mathcal{E}})\wV^{\dagger (n)}\\&=\wV_n(I_{E^{\ot n}}\ot P_{\mathcal{E}})P_{R(\wV_n^*)}(I_{E^{\ot n}}\ot P_{\mathcal{E}})\wV^{\dagger (n)}=\wV_n(I_{E^{\ot n}}\ot P_{\mathcal{E}})\wV^{\dagger (n)}=Q_n.
	\end{align*} Now, we want to prove $R(Q_n)\perp R(Q_m)$ for $n\neq m.$ Suppose $1\le n < m,$ then we have $$R(Q_n)\subseteq R(\wV_{n+1})^{\perp}\subseteq R(\wV_m)^{\perp}\subseteq R(\wV_m)^{\perp}\cup R(\wV_{m+1})=R(Q_m)^{\perp}.$$ Therefore, the sequence $(\sum_{i=0}^{n} Q_i)_{n\ge 0}$  converges strongly to an orthogonal projection $Q.$ Using $R(Q_n)=R(\wV_{n})\cap R(\wV_{n+1})^{\perp}$ for all $n\ge 0$ and $R(Q_n)\perp R(Q_m)$ for $n\neq m,$ we get $R(Q)=\bigoplus_{i=0}^{\infty}(R(\wV_i)\cap R(\wV_{i+1})^{\perp}).$ Using $\sum_{i=0}^{n-1} Q_i=I-P_n,$ we get  $\sum_{n=0}^{\infty}\wV_n(I_{E^{\ot n}}\ot P_{\mathcal{E}})\wV^{\dagger (n)}$ converges strongly to $Q= I-P.$
	
	$(4)$ Since $\sum_{i=0}^{n-1} Q_i=I-P_n=I-\wV_n\wV^{\dagger (n)},$ we have $R(Q)=\{h\in \mathcal{H}: \lim_{n\to \infty}\|\wV_n\wV^{\dagger (n)}h\|=0\}.$
	
	$(5)$ Let $(\sigma,V)$ be a regular, then $\wV({E}\ot R^{\infty}{(\wt{V})})=R^{\infty}{(\wt{V})}.$ On the other hand, let $\eta \in E\ot \mathcal{H}=N(\wV)\oplus N(\wV)^{\perp}=N(\wV)\oplus R(\wV^*)=N(\wV)\oplus R(\wV^{\dagger}),$ then there exist $h\in \mathcal{H}$ and $\xi\in N(\wV)$ such that $\eta=\wV^{\dagger}h+\xi.$ For $m\ge1,$ we have \begin{align*}
		P_{m+1}\wV\eta=\wV_{m+1}\wV^{\dagger (m+1)}\wV\wV^{\dagger}h=\wV_{m+1}\wV^{\dagger (m+1)}h=P_{m+1}h \quad and
	\end{align*} \begin{align*}
		\wV(I_E\ot P_m)\eta&=\wV_{m+1}\wV^{\dagger (m+1)}h+\wV_{m+1}(I_E\ot \wV^{\dagger (m)})\xi\\&=P_{m+1}h+\wV_{m+1}(I_E\ot \wV^{\dagger (m)})\xi.
	\end{align*}Since $\wV^{\dagger}$ is generalized inverse of $\wV$ and $(\sigma,V)$ is regular, using mathematical induction and Proposition \ref{d3}, we get $(I_E\ot \wV^{\dagger (m)})N(\wV)\subseteq N(\wV_{m+1}).$ Therefore $\wV(I_E\ot P_m)\eta=P_{m+1}h,$ and thus $P_{m+1}\wV\eta=\wV(I_E\ot P_m)\eta$ for all $\eta\in E\ot \mathcal{H}.$ As $m\to \infty,$ we obtain $P\wV=\wV(I_E\ot P),$ and thus $R(P)=R^{\infty}{(\wt{V})}$ is $(\sigma,V)$-reducing. And hence $R(Q)=R(P)^{\perp}$ is also $(\sigma,V)$-reducing.
\end{proof}

The next result is an analogue of \cite[Lemma 3.1]{G38}.

\begin{lemma}\label{RK1}
	Let $(\sigma,V)$ be a c.b.c. representation of $E$ on $\mathcal{H}$ with closed range such that $(I_E\ot \wV_i\wV_i^*)N(\wV)\subseteq N(\wV)$ for all $1\le i\le n.$ Then $R(\wV_{i+1})$ is closed for $1\le i\le n.$
\end{lemma}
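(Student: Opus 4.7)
The natural approach is finite induction on $i$ from $1$ to $n$, with $R(\wV_1) = R(\wV)$ closed by hypothesis as the base case for the running inductive quantity ``$R(\wV_i)$ is closed''. At each stage I would use the factorization $\wV_{i+1} = \wV(I_E \ot \wV_i)$ together with the classical criterion for compositions: for bounded operators $A$, $B$ between Hilbert spaces with closed ranges, $R(BA)$ is closed if and only if $R(A) + N(B)$ is closed. Since the inductive hypothesis gives $R(\wV_i)$ closed, $I_E \ot \wV_i$ has closed range $E \ot R(\wV_i)$, so it suffices to prove that $(E \ot R(\wV_i)) + N(\wV)$ is closed in $E \ot \mathcal{H}$.

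The heart of the argument will be a spectral-calculus reduction. The operator $T_i := I_E \ot \wV_i \wV_i^*$ is bounded and self-adjoint on $E \ot \mathcal{H}$; this is legitimate because the covariance identity $\sigma(b)\wV_i = \wV_i(\phi_i(b) \ot I_{\mathcal{H}})$ forces $\wV_i \wV_i^*$ to commute with $\sigma(\mathcal{B})$. By the hypothesis of the lemma, $N(\wV)$ is $T_i$-invariant; since $T_i$ is self-adjoint, $N(\wV)$ is in fact reducing for $T_i$ and is therefore invariant under every bounded Borel function of $T_i$. Applying the indicator of $(0,\infty)$ and using the inductive hypothesis that $R(\wV_i)$ is closed (whence $\overline{R(\wV_i\wV_i^*)} = R(\wV_i)$), this spectral projection equals $I_E \ot P_{R(\wV_i)}$. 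Thus $N(\wV)$ is invariant under the orthogonal projection $I_E \ot P_{R(\wV_i)}$, and consequently under its complement $I_E \ot P_{R(\wV_i)^\perp}$ as well.

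This invariance yields the orthogonal decomposition
\[
N(\wV) \;=\; \bigl(N(\wV) \cap (E \ot R(\wV_i))\bigr) \;\oplus\; \bigl(N(\wV) \cap (E \ot R(\wV_i)^\perp)\bigr),
\]
from which
\[
(E \ot R(\wV_i)) + N(\wV) \;=\; (E \ot R(\wV_i)) \;\oplus\; \bigl(N(\wV) \cap (E \ot R(\wV_i)^\perp)\bigr).
\]
The right-hand side is an orthogonal sum of two closed subspaces and is therefore closed, which by the closed-range criterion for products completes the inductive step and delivers the conclusion for all $1 \le i \le n$. I expect the main technical point to be justifying the spectral-calculus step in the tensor-product setting: specifically, verifying that $I_E \ot P_{R(\wV_i)}$ is well-defined on the interior tensor product (which requires $R(\wV_i)$ to be $\sigma(\mathcal{B})$-invariant, a direct consequence of the intertwining property), and that the spectral projections of $I_E \ot \wV_i\wV_i^*$ factor as $I_E$ tensored with the spectral projections of $\wV_i\wV_i^*$. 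Everything else is a clean combination of the composition-closedness criterion with the orthogonal splitting of $N(\wV)$.
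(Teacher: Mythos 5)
Your argument is correct, but it takes a genuinely different route from the paper. The paper's proof is more elementary and works entirely with adjoints: it inducts on the closedness of $R(\wV_i^*)$, takes $\zeta\in\overline{R(\wV_{i+1}^*)}\subseteq R(I_E\ot\wV_i^*)$, writes $\zeta=(I_E\ot\wV_i^*)\eta$ with $\eta=\xi+\wV^*h$, $\xi\in N(\wV)$, and uses the hypothesis only to see that $\wV_{i+1}(I_E\ot\wV_i^*)\xi=\wV(I_E\ot\wV_i\wV_i^*)\xi=0$; hence $\zeta-\wV_{i+1}^*h\in N(\wV_{i+1})\cap\overline{R(\wV_{i+1}^*)}=\{0\}$, so $\zeta=\wV_{i+1}^*h$, giving closedness of $R(\wV_{i+1}^*)$ and therefore of $R(\wV_{i+1})$ --- no angle criterion, no functional calculus. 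You instead invoke the Bouldin--Izumino criterion for closedness of a product range and reduce to the closedness of $(E\ot R(\wV_i))+N(\wV)$, obtained by showing $N(\wV)$ is reduced by $I_E\ot P_{R(\wV_i)}$; this is heavier machinery, but it makes the geometric content of the hypothesis explicit (namely that $P_{N(\wV)}$ commutes with $I_E\ot\wV_i\wV_i^*$, so $N(\wV)$ splits orthogonally along $E\ot R(\wV_i)$). Two details are worth making explicit if you write this up: (a) the direction of the product criterion you actually need (sum closed implies product range closed, given $R(\wV)$ closed) has a short direct proof, so the full equivalence need not be cited; (b) since $R(\wV_i)$ is closed by induction, $0$ is isolated in (or absent from) $\sigma(\wV_i\wV_i^*)$, so $P_{R(\wV_i)}$ is a \emph{continuous} function $f$ of $\wV_i\wV_i^*$ and $I_E\ot P_{R(\wV_i)}=f(I_E\ot\wV_i\wV_i^*)$ follows from continuous functional calculus through the unital $*$-homomorphism $X\mapsto I_E\ot X$ on $\sigma(\mathcal{B})'$; this sidesteps the Borel-calculus/normality issue on the interior tensor product that you flag as the main technical point, and it still yields the invariance of $N(\wV)$ under $I_E\ot P_{R(\wV_i)}$ that your orthogonal splitting requires.
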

\begin{proof}
	We will prove it using the mathematical induction. Since $R(\wV)$ is closed, $R(\wV^*)$ is also closed. Suppose that $R(\wV_i^*)$ is closed. We want to prove that $R(\wV_{i+1}^*)$ is closed. Let $\zeta\in \overline{R(\wV_{i+1}^*)}\subseteq \overline{R(I_E \ot \wV_{i}^*)}={R(I_E\ot \wV_{i}^*)},$ then there exists $\eta\in E\ot \mathcal{H}$ such that $\zeta=(I_E\ot\wV_i^*)\eta.$ Since $\eta\in E\ot \mathcal{H}=N(\wV)\oplus N(\wV)^{\perp}=N(\wV)\oplus R(\wV^*),$ we can write $\eta=\xi + \wV^*h$ for some $\xi\in N(\wV)$ and $h\in \mathcal{H}.$ If $(I_E\ot \wV_i\wV_i^*)N(\wV)\subseteq N(\wV)$ for $1\le i\le n,$ then\begin{align*}
		\wV_{i+1}\zeta&=\wV_{i+1}(I_E\ot\wV_i^*)\xi+\wV_{i+1}(I_E\ot\wV_i^*)\wV^*h\\&=	\wV(I_E\ot\wV_i\wV_i^*)\xi+\wV_{i+1}\wV_{i+1}^*h=\wV_{i+1}\wV_{i+1}^*h,
	\end{align*} and hence $\zeta-\wV_{i+1}^*h\in N(\wV_{i+1}).$ Since $\zeta\in \overline{R(\wV_{i+1}^*)}$ and $\wV_{i+1}^*h\in \overline{R(\wV_{i+1}^*)},$ $\zeta-\wV_{i+1}^*h\in \overline{R(\wV_{i+1}^*)}=N(\wV_{i+1})^{\perp}.$ Therefore $\zeta=\wV_{i+1}^*h\in R(\wV_{i+1}^*).$ It follows that $R(\wV_{i+1}^*)$ is closed, and consequently, $R(\wV_{i+1})$ is also closed.
\end{proof}
\begin{theorem}
	Let $(\sigma,V)$ be a c.b.c. representation of $E$ on $\mathcal{H}$ with closed range such that $(I_E\ot \wV_i\wV_i^*)N(\wV)\subseteq N(\wV)$ for all $1\le i\le n.$ Then $\wV_{i+1}^{\dagger}=\wV^{\dagger (i+1)}$ on $R(\wV_{i+1}).$
\end{theorem}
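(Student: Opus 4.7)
The plan is to induct on $i$, leveraging the two parallel factorizations $\wV_{i+1} = \wV(I_E\ot \wV_i)$ and $\wV^{\dagger(i+1)} = (I_E\ot \wV^{\dagger(i)})\wV^{\dagger}$. The base case $i=0$ is immediate since $\wV_1=\wV$ and $\wV^{\dagger(1)}=\wV^{\dagger}$, and by Lemma \ref{RK1} each range $R(\wV_{i+1})$ is closed for $1\le i\le n$, so the Moore-Penrose inverses appearing in the statement exist.

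First I would rewrite the standing hypothesis in a form that fits the induction. Using $\wV_{i+1}=\wV(I_E\ot\wV_i)$, the condition $(I_E\ot\wV_i\wV_i^*)N(\wV)\subseteq N(\wV)$ is equivalent to $\wV_{i+1}(I_E\ot\wV_i^*)\xi=0$ for every $\xi\in N(\wV)$, i.e.\ $(I_E\ot\wV_i^*)N(\wV)\subseteq N(\wV_{i+1})$; taking orthogonal complements and using self-adjointness of $I_E\ot\wV_i\wV_i^*$ yields the key consequence
\[
(I_E\ot\wV_i)\bigl(N(\wV_{i+1})^{\perp}\bigr)\subseteq N(\wV)^{\perp}.
\]

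For the inductive step, assume $\wV_j^{\dagger}=\wV^{\dagger(j)}$ on $R(\wV_j)$ for every $1\le j\le i$. Pick $h\in R(\wV_{i+1})$ and set $k=\wV_{i+1}^{\dagger}h$, so $k\in N(\wV_{i+1})^{\perp}$ and $\wV(I_E\ot\wV_i)k=h$. Since $\wV^{\dagger}\wV=P_{N(\wV)^{\perp}}$, we get $\wV^{\dagger}h=P_{N(\wV)^{\perp}}(I_E\ot\wV_i)k$, and the displayed inclusion collapses this projection to give $\wV^{\dagger}h=(I_E\ot\wV_i)k$. The inductive hypothesis yields $\wV^{\dagger(i)}\wV_i=\wV_i^{\dagger}\wV_i=P_{N(\wV_i)^{\perp}}$ on all of $E^{\ot i}\ot\mathcal H$, so
\[
\wV^{\dagger(i+1)}h = (I_E\ot\wV^{\dagger(i)})(I_E\ot\wV_i)k = (I_E\ot P_{N(\wV_i)^{\perp}})k.
\]
The trivial inclusion $E\ot N(\wV_i)\subseteq N(\wV_{i+1})$ (from $\wV_{i+1}(e\ot\eta)=\wV(e\ot\wV_i\eta)=0$ whenever $\eta\in N(\wV_i)$) gives $k\in E\ot N(\wV_i)^{\perp}$, so the outer projection is the identity on $k$, and we conclude $\wV^{\dagger(i+1)}h=k=\wV_{i+1}^{\dagger}h$, closing the induction.

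The main obstacle is tensor-slot bookkeeping rather than depth: one must use precisely the factorization $\wV_{i+1}=\wV(I_E\ot\wV_i)$, matched by $\wV^{\dagger(i+1)}=(I_E\ot\wV^{\dagger(i)})\wV^{\dagger}$, rather than the opposite factorization, because only this ordering places $I_E\ot\wV_i$ directly next to $\wV^{\dagger}$ so that the reformulated hypothesis kills the projection $P_{N(\wV)^{\perp}}$. Without this collapse an extra projection would survive inside the composition and the tower $(I_E\ot\wV^{\dagger(i)})\wV^{\dagger}$ would fail to invert $\wV_{i+1}$ on $R(\wV_{i+1})$; this single use of the hypothesis is what makes the whole argument go through.
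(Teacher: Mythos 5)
Your proposal is correct and takes essentially the same route as the paper: induction on $i$, closedness of $R(\wV_{i+1})$ via Lemma \ref{RK1}, the factorizations $\wV_{i+1}=\wV(I_E\ot \wV_i)$ and $\wV^{\dagger(i+1)}=(I_E\ot \wV^{\dagger(i)})\wV^{\dagger}$, and the standing hypothesis used (through adjoints/orthogonality) to show that the canonical preimage lies in $N(\wV)^{\perp}$ so that $\wV^{\dagger}$ recovers it. Your key inclusion $(I_E\ot \wV_i)\bigl(N(\wV_{i+1})^{\perp}\bigr)\subseteq N(\wV)^{\perp}$ is just a repackaging of the paper's step identifying $\eta=(I_E\ot \wV_{i+1})\zeta$, so the two arguments coincide in substance.
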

\begin{proof}
	From Lemma \ref{RK1}, $R(\wV_{i+1})$ is closed for $1\le i\le n.$ We will again use mathematical induction, for $i=1,$ since $E^{\ot 2}\ot \mathcal{H}=N(\wV_2)\oplus N(\wV_2)^{\perp}.$ Let $h\in R(\wV_2)\subseteq R(\wV),$ then there exist $\zeta\in N(\wV_2)^{\perp}$ and $\eta\in N(\wV)^{\perp}$ such that $h=\wV_2\zeta$ and $h=\wV \eta.$ It follows that $\eta-(I_E\ot \wV)\zeta\in N(\wV).$ Since $\zeta\in N(\wV_2)^{\perp}=R(\wV_2^*),$ we have $\zeta=(I_E\ot \wV^*)v$ for some $v\in N(\wV)^{\perp},$ and hence $(I_E\ot \wV)\zeta=(I_E\ot \wV\wV^*)v\in (I_E\ot \wV\wV^*)N(\wV)^{\perp} \subseteq N(\wV)^{\perp}.$ Therefore $\eta-(I_E\ot \wV)\zeta\in N(\wV)^{\perp}\cap N(\wV)=\{0\},$ and thus $\eta=(I_E\ot \wV)\zeta.$ Since $N(\wV_2)^{\perp}\subseteq N(I_E\ot \wV)^{\perp}$ and $\wV^{\dagger}\wV=P_{N(\wV)^{\perp}},$ we have \begin{align*}
		\wV^{\dagger (2)}h&=(I_E\ot \wV^{\dagger})\wV^{\dagger}h=(I_E\ot \wV^{\dagger})\wV^{\dagger}\wV \eta=(I_E\ot \wV^{\dagger})\eta=(I_E\ot \wV^{\dagger}\wV)\zeta\\&=P_{N(I_E\ot \wV)^{\perp}}\zeta=\zeta=P_{N(\wV_2)^{\perp}}\zeta=\wV_2^{\dagger}\wV_2\zeta=\wV_2^{\dagger}h.
	\end{align*} Assume that $\wV_{i+1}^{\dagger}=\wV^{\dagger (i+1)}$ on $R(\wV_{i+1}).$ Let $h\in R(\wV_{i+2})\subseteq R(\wV),$ then there exist $\zeta\in N(\wV_{i+2})^{\perp}$ and $\eta\in N(\wV)^{\perp}$ such that $h=\wV_{i+2}\zeta$ and $h=\wV \eta.$ It follows that $\eta-(I_E\ot \wV_{i+1})\zeta\in N(\wV),\wV^{\dagger}h=\eta$ and $$\wV_{i+2}^{\dagger}h=\wV_{i+2}^{\dagger}\wV_{i+2}\zeta=P_{N(\wV_{i+2})^{\perp}}\zeta=\zeta.$$ Since $\zeta\in N(\wV_{i+2})^{\perp}=R(\wV_{i+2}^*),$ we have $\zeta=(I_E\ot \wV_{i+1}^*)v$ for some $v\in N(\wV)^{\perp},$ and thus $(I_E\ot \wV_{i+1})\zeta=(I_E\ot \wV_{i+1}\wV_{i+1}^*)v\in (I_E\ot \wV_{i+1}\wV_{i+1}^*)N(\wV)^{\perp} \subseteq N(\wV)^{\perp}.$ Therefore $\eta-(I_E\ot \wV_{i+1})\zeta\in N(\wV)^{\perp}\cap N(\wV)=\{0\},$ and hence $\eta=(I_E\ot \wV_{i+1})\zeta\in R(I_E\ot \wV_{i+1}).$ Using $\zeta\in N(\wV_{i+2})^{\perp}\subseteq N(I_E\ot \wV_{i+1})^{\perp},\eta\in R(I_E\ot \wV_{i+1})$ and $\wV_{i+1}^{\dagger}=\wV^{\dagger (i+1)}$ on $R(\wV_{i+1}),$ we have  \begin{align*}
		\wV^{\dagger (i+2)}h&=(I_E\ot \wV^{\dagger (i+1)})\wV^{\dagger}h=(I_E\ot \wV^{\dagger (i+1)})\eta=(I_E\ot \wV_{i+1}^{\dagger})\eta\\&=(I_E\ot \wV_{i+1}^{\dagger}\wV_{i+1})\zeta=P_{N(I_E\ot \wV_{i+1})^{\perp}}\zeta=\zeta.
	\end{align*} This completes the proof.
\end{proof}

\section{Cauchy dual of concave covariant representations}

In this section, we will discuss the Cauchy dual of concave completely bounded covariant representations. The following definitions are inspired from \cite[Section 4]{EMZ19}: 
\begin{definition}	
	Let $(\sigma,V)$ be a c.b.c. representation of $E$ on $\mathcal{H}.$ 
	\begin{itemize}
		\item[(i)] The representation $(\sigma,V)$  is said to be {\rm hyponormal modulo} ${N}(\wV)$ if $$\|(I_E \ot \wV^*)\eta\| \le \|\wV \eta\| \quad for\quad every\quad  \eta \in {{N}(\wV)^\perp};$$
		\item [(ii)]  The representation $(\sigma,V)$ is called {\rm $n$-expansive} modulo ${N}(\wV)$ if $$\sum_{j=0}^{n} (-1)^j \binom{n}{j} \|(I_{E^{\ot (n-j)}} \ot \wV_j)\xi\|^2 \le 0  \quad for\quad all\quad   \xi \in {{N}(I_{E^{\ot (n-1)}} \ot \wV)^\perp}.$$
	\end{itemize}
\end{definition}

\begin{definition}	
	Let $(\sigma,V)$ be a c.b.c. representation of $E$ on $\mathcal{H}$ which satisfies
	$$\|\wV_2 \zeta\|^2 + \|\zeta\|^2 \le 2\|(I_E \ot \wV)\zeta\|^2  \quad for\quad all\quad  \zeta \in {{N}(I_E \ot \wV)^\perp}.$$ Then we say that the representation $(\sigma,V)$ is {\rm concave modulo} ${N}(\wV).$
\end{definition}
\begin{example}
	Suppose that $E$ is an $n$-dimensional Hilbert space with the orthonormal basis $\{\delta_i\}_{i\in I_{n}}$ and $\mathcal{H}$ is a Hilbert space with the orthonormal basis $\{e_m :\: m\ge 0\}.$ Let $(\rho, S^{w})$ be the  unilateral weighted shift c.b.c. representation of $E$ on $\mathcal{H}$ defined by
	$$S^{w}(\delta_i)=V_i \:\: \mbox{and}\:\:\: \rho(b)=b I_{\mathcal{H}}, \:\:b \in \mathbb{C},$$ where $V_{i}(e_{m})=w_{i,m}e_{nm+i}$ for all $m\ge 0,i\in I_n$ and $\{w_{i,m} : i \in I_n,\:\:m\ge 0 \}$ is a bounded set of nonnegative real numbers. For $i\in I_n,$ it is easy to verify that $V_i$ is concave if and only if \begin{equation}\label{DD1}
		w_{i,m}^2w_{i,nm+i}^2-2w_{i,m}^2+1\le 0.
	\end{equation}  Let $A=\{m_0\}\subset \mathbb{N}\cup \{0\}$ be a non-empty set. For $i\in I_n,$ we construct a sequence $w_A$ given by $w_A(m)=w_{i,m}$ for $m\notin A$ and $w_A(m_0)=w_{i,{m_0}}=0$. Since $V_{i,A}(e_{m})=w_A{(m)}e_{nm+i},$ we have $N(V_{i,A})=span\{e_{m_0}\}.$ For $i\in I_n,$ we obtain
	\begin{align*}
		\|V^2_{i,A}e_{m_0}\|^2 +\|e_{m_0}\|^2-2\|V_{i,A}e_{m_0}\|^2=w_{i,m_0}^2w_{i,nm_0+i}^2+1-2w_{i,m_0}^2=1> 0,
	\end{align*} and hence $V_{i,A}$ is not concave. From Equation $($\ref{DD1}$)$, $V_{i,A}$ is concave modulo $N(V_{i,A})$ for all $i\in I_n$. Since $R(V_i)\perp R(V_j)$ for distinct $i,j\in I_n,$ we get $N(\widetilde{S}^{w_A})=\bigoplus_{i=1}^{n} span \{\delta_i\ot e_{m_0}\}.$ It is easy to see that $(\rho, S^{w_A})$ is concave modulo $N(\widetilde{S}^{w_A}).$
\end{example}

The next result is an analogue of  \cite[Proposition 4.1]{EMZ19}.
\begin{proposition}
	Let $(\sigma,V)$ be a concave c.b.c. representation modulo ${N}(\wV).$ Then $(\sigma,V)$ has closed range.
\end{proposition}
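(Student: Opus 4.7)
The plan is to mirror the classical argument of \cite[Proposition 4.1]{EMZ19}: I will discard the nonnegative square $\|\wV_2\zeta\|^2$ in the concavity inequality to produce a bounded-below estimate for $I_E\ot\wV$, and then transfer this to a closed-range statement for $\wV$ itself.

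Starting from the concavity modulo $N(\wV)$ hypothesis
\[
\|\wV_2\zeta\|^2+\|\zeta\|^2 \le 2\|(I_E\ot\wV)\zeta\|^2, \qquad \zeta\in N(I_E\ot\wV)^\perp,
\]
I would first throw away $\|\wV_2\zeta\|^2 \ge 0$ to obtain
\[
\|(I_E\ot\wV)\zeta\| \ge \tfrac{1}{\sqrt{2}}\|\zeta\|, \qquad \zeta\in N(I_E\ot\wV)^\perp.
\]
This says that $I_E\ot\wV$ is bounded below on the orthogonal complement of its kernel, and hence $R(I_E\ot\wV)$ is a closed subspace of $E\ot\mathcal H$.

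The remaining step is to descend closedness from $I_E\ot\wV$ to $\wV$. Given a convergent sequence $\wV\eta_n\to y$ in $\mathcal H$ and any fixed nonzero $\xi\in E$, the identity $(I_E\ot\wV)(\xi\ot\eta_n)=\xi\ot\wV\eta_n\to\xi\ot y$ shows, by the closedness established above, that $\xi\ot y\in R(I_E\ot\wV)$; a routine unwinding of the $C^*$-correspondence tensor product (using the inner-product structure on $E\ot_\phi E\ot_\sigma\mathcal H$ and the nondegeneracy of $\phi$) then produces $y\in R(\wV)$, so $R(\wV)$ is closed.

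The main obstacle will be this last extraction step: although conceptually it is just the standard fact that taking the $C^*$-correspondence tensor product on the left with a nonzero factor reflects closed range, one must be careful with the bimodule structure in verifying it. Everything else in the proof is a one-line consequence of the concavity hypothesis together with the identification $N(I_E\ot\wV)^\perp = E\ot N(\wV)^\perp$.
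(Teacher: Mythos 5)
Your first two steps are sound and are exactly the paper's opening estimate: discarding $\|\wV_2\zeta\|^2$ gives $\|(I_E\ot\wV)\zeta\|\ge\tfrac{1}{\sqrt{2}}\|\zeta\|$ for $\zeta\in N(I_E\ot\wV)^{\perp}$, so $R(I_E\ot\wV)$ is closed. The genuine gap is the final ``unwinding'' step. From $\xi\ot y=(I_E\ot\wV)u$ the only information the correspondence structure hands you is obtained by applying the adjoint of the map $h\mapsto\eta\ot h$ (whose adjoint sends $\zeta\ot h$ to $\sigma(\langle\eta,\zeta\rangle)h$) together with the covariance relation $\sigma(b)\wV=\wV(\phi(b)\ot I_{\mathcal H})$: this yields $\sigma(\langle\eta,\xi\rangle)y\in R(\wV)$ for every $\eta\in E$, not $y\in R(\wV)$. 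To upgrade, you would need an element $\xi$ with $\sigma(\langle\xi,\xi\rangle)$ invertible, which a general correspondence does not provide; using nondegeneracy of $\sigma$ and an approximate unit instead only returns $y\in\overline{R(\wV)}$, which is where you started, so the argument is circular. Moreover the ``standard fact'' you invoke is false as stated: take $\mathcal B=E=C([0,1])$ (the identity correspondence), $\mathcal H=L^2(0,1)$, $\sigma$ the multiplication representation and $V(f)=M_{tf}$ (multiplication by $tf$). Then $R(\wV)$ and $R(I_E\ot\wV)$ both identify with the dense, non-closed range of $M_t$, and for the nonzero vector $\xi(t)=t$ and $y=1$ one has $\xi\ot y\in R(I_E\ot\wV)$ (it corresponds to $t=M_t1$) while $y\notin R(\wV)$. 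Coefficient extraction of the kind you have in mind works when $\mathcal B=\mathbb C$ (ordinary Hilbert-space tensor products), but not over a general $C^*$-algebra.

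The paper sidesteps any such range-reflection lemma by staying at the level of a concrete Cauchy sequence: given $\wV\eta_n\to h$, choose representatives $\eta_n'\in N(\wV)^{\perp}$ with $\wV\eta_n'=\wV\eta_n$; since $\xi\ot(\eta_n'-\eta_m')\in E\ot N(\wV)^{\perp}\subseteq N(I_E\ot\wV)^{\perp}$, your own estimate gives $\|\xi\ot(\eta_n'-\eta_m')\|^2\le 2\|\xi\ot\wV(\eta_n'-\eta_m')\|^2$, so $(\eta_n')$ is Cauchy and its limit $\eta'$ satisfies $\wV\eta'=h$ directly. Either adopt this representative argument, or prove a reflection statement valid under the concavity hypothesis; as written, the last step of your proposal does not follow.
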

\begin{proof}
	Let $(\wV \eta_n)\to h$ be a sequence in $R(\wV),$ then there exists $(\eta'_n) \subseteq {N}(\wV)^{\perp}$ such that $\wV \eta_n=\wV \eta'_n.$ This implies that $(\wV \eta'_n)$ is a Cauchy sequence. Since ${{N}(I_E \ot \wV)^\perp}=E \ot {{N}(\wV)}^\perp$ and for every $\xi \in E,$ we have $$ \|\xi \ot \eta'_n\|^2 \le 2\|(I_E \ot \wV)(\xi \ot \eta'_n)\|^2-\|\wV_2( \xi \ot \eta'_n)\|^2 \le 2\|(I_E \ot \wV)(\xi \ot \eta'_n)\|^2.$$ It follows that $(\xi \ot \eta'_n)_{n\ge 0}$ is a Cauchy sequence, and hence $( \eta'_n)_{n\ge 0}$ is also a Cauchy sequence. Suppose that $\lim\limits_{n \to \infty} ( \eta'_n)= \eta',$ then $h=\wV \eta'.$ Thus $R(\wV)$ is closed.
\end{proof}

The following proposition gives the connection between the concavity modulo $N(\wV)$ and the Moore-Penrose inverse: 
\begin{proposition}\label{X3}
	Let $(\sigma,V)$ be a c.b.c. representation of $E$ on $\mathcal{H}$ with closed range. Then $(\sigma,V)$ is concave modulo ${N}(\wV)$ if and only if 
	\begin{align}\label{D1}
		\|\wV_2 \zeta\|^2 + \|(I_E \ot \wV^\dagger \wV)\zeta\|^2 - 2\|(I_E \ot \wV)\zeta\|^2 \le 0 \quad for \quad \zeta \in E^{\ot 2}\ot \mathcal{H}.
	\end{align}
	
\end{proposition}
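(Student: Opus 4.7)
The plan is to recognize that $I_E \otimes \wV^{\dagger}\wV$ is the orthogonal projection of $E \otimes \mathcal{H}$ onto $N(I_E \otimes \wV)^{\perp}$, and then reduce both directions of the equivalence to a simple orthogonal decomposition of $\zeta$. Indeed, since $R(\wV)$ is closed, one of the standard properties of the Moore-Penrose inverse listed earlier gives $\wV^{\dagger}\wV = P_{R(\wV^{*})} = P_{N(\wV)^{\perp}}$, and tensoring by $I_E$ yields that $I_E \otimes \wV^{\dagger}\wV$ is the orthogonal projection onto $E \otimes N(\wV)^{\perp} = N(I_E \otimes \wV)^{\perp}$.

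For the ``only if'' direction, I would take an arbitrary $\zeta \in E^{\otimes 2} \otimes \mathcal{H}$ and decompose it orthogonally as $\zeta = \zeta_{1} + \zeta_{2}$, where $\zeta_{1} := (I_E \otimes \wV^{\dagger}\wV)\zeta \in N(I_E \otimes \wV)^{\perp}$ and $\zeta_{2} \in N(I_E \otimes \wV)$. Since $(I_E \otimes \wV)\zeta_{2} = 0$, I get at once
\[
(I_E \otimes \wV)\zeta = (I_E \otimes \wV)\zeta_{1}, \qquad \wV_{2}\zeta = \wV(I_E \otimes \wV)\zeta = \wV_{2}\zeta_{1},
\]
and of course $(I_E \otimes \wV^{\dagger}\wV)\zeta = \zeta_{1}$. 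Applying the concavity modulo $N(\wV)$ hypothesis to $\zeta_{1} \in N(I_E \otimes \wV)^{\perp}$ gives $\|\wV_{2}\zeta_{1}\|^{2} + \|\zeta_{1}\|^{2} - 2\|(I_E \otimes \wV)\zeta_{1}\|^{2} \le 0$, which after substitution is exactly the inequality \eqref{D1}.

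The ``if'' direction is even more immediate: restrict $\zeta$ to lie in $N(I_E \otimes \wV)^{\perp}$, on which $I_E \otimes \wV^{\dagger}\wV$ acts as the identity, so $\|(I_E \otimes \wV^{\dagger}\wV)\zeta\| = \|\zeta\|$, and \eqref{D1} collapses to the defining inequality of concavity modulo $N(\wV)$.

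There is really no hard step here; the only point that needs care is the identification of $I_E \otimes \wV^{\dagger}\wV$ as the projection onto $N(I_E \otimes \wV)^{\perp}$, which in turn relies on the fact that $\wV^{\dagger}\wV$ is self-adjoint and idempotent (properties (2) and $(\wV^{\dagger}\wV)^{*} = \wV^{\dagger}\wV$ of the Moore-Penrose inverse recalled earlier) together with the elementary identity $N(I_E \otimes \wV) = E \otimes N(\wV)$.
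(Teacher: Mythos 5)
Your proof is correct and follows essentially the same route as the paper: both arguments rest on identifying $I_E \otimes \wV^{\dagger}\wV$ with the orthogonal projection onto $N(I_E \otimes \wV)^{\perp}$, reducing a general $\zeta$ to its projection (your explicit decomposition $\zeta=\zeta_1+\zeta_2$ is just the paper's substitution of $(I_E \otimes \wV^{\dagger}\wV)\zeta$ together with $\wV\wV^{\dagger}\wV=\wV$), and noting that on $N(I_E \otimes \wV)^{\perp}$ the inequality \eqref{D1} collapses to the definition of concavity modulo $N(\wV)$.
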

\begin{proof}
	Let $(\sigma,V)$ be a concave modulo ${N}(\wV)$ and let $\zeta \in E^{\ot 2}\ot \mathcal{H},$ then using $(I_E \ot\wV^\dagger\wV)=P_{{N}(I_E \ot \wV)^\perp}$ and $\wV\wV^\dagger\wV=\wV,$ we have
	\begin{align*}
		&\|\wV_2 \zeta\|^2 + \|(I_E \ot \wV^\dagger \wV)\zeta\|^2 - 2\|(I_E \ot \wV)\zeta\|^2 \\&= \|\wV_2 (I_E \ot \wV^\dagger \wV)\zeta\|^2 + \|(I_E \ot \wV^\dagger \wV)\zeta\|^2 - 2\|(I_E \ot \wV)(I_E \ot \wV^\dagger \wV)\zeta\|^2  \le 0.
	\end{align*}
	On the other side, let $\zeta \in {{N}(I_E \ot \wV)^\perp},$ then $(I_E \ot\wV^\dagger\wV)\zeta=\zeta.$ It follows that $
	\|\wV_2 \zeta\|^2 + \|\zeta\|^2 - 2\|(I_E \ot \wV)\zeta\|^2  \le \|\zeta\|^2-\|(I_E \ot \wV^\dagger \wV)\zeta\|^2=0.$ 
\end{proof}

\begin{proposition}\label{X5}
	Let $(\sigma,V)$ be a concave c.b.c. representation modulo ${N}(\wV)$ which satisfies $(I_{E^{\ot n}}\ot \wV) N(I_{E^{\ot n}}\ot \wV)^{\perp} \subseteq N(I_{E^{\ot n-1}}\ot \wV)^{\perp}$ for every $n\ge 1.$ Then  $(\sigma,V)$ is expansive modulo ${N}(\wV).$ Thus $(\sigma,V')$ is contractive.
\end{proposition}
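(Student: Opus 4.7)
The argument splits into two stages, adapting the Shimorin--Chavan strategy for concave Hilbert space operators to the $C^{*}$-correspondence setting.

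\emph{Stage 1: Concavity modulo $N(\wV)$ implies expansivity modulo $N(\wV)$.} Fix $\eta \in N(\wV)^{\perp}$ and any $\xi \in E$. First observe that $E \otimes N(\wV)^{\perp} \subseteq (E \otimes N(\wV))^{\perp} = N(I_E \otimes \wV)^{\perp}$, since for $\mu \in N(\wV)$ and $b \in \mathcal{B}$ the intertwining $\wV(\phi(b) \otimes I) = \sigma(b)\wV$ forces $(\phi(b) \otimes I)\mu \in N(\wV)$, so the inner product $\langle \xi \otimes \eta, \xi' \otimes \mu\rangle = \langle \eta, (\phi(\langle\xi,\xi'\rangle) \otimes I)\mu\rangle$ vanishes. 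Hence $\xi \otimes \eta \in N(I_E \otimes \wV)^{\perp}$, and the concavity inequality from Proposition \ref{X3} applies. The iteration hypothesis then gives $(I_E \otimes \wV)(\xi \otimes \eta) = \xi \otimes \wV\eta \in N(\wV)^{\perp}$, so the further lift $\xi \otimes \xi \otimes \wV\eta$ lies again in $N(I_E \otimes \wV)^{\perp}$, and so on. Iterating yields lifts $\zeta_k \in E^{\otimes(k+1)} \otimes \mathcal{H}$ that stay in the appropriate kernel-orthogonal complement at each level of the Fock tower, to each of which concavity may be applied. Tracking the resulting chain of inequalities produces a nonnegative real sequence $a_k$ whose consecutive differences are nonincreasing, with leading difference bounded above by $\|\wV\eta\|^{2} - \|\eta\|^{2}$. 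If this quantity were strictly negative, $a_k$ would tend to $-\infty$, contradicting $a_k \geq 0$; hence $\|\wV\eta\|^{2} \geq \|\eta\|^{2}$, which is expansivity modulo $N(\wV)$.

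\emph{Stage 2: Expansivity modulo $N(\wV)$ yields that $\wV'$ is a contraction.} By item (4) of Proposition \ref{R1}, $\wV'^{*}\wV' = (\wV^{*}\wV)'$. Since $\wV^{*}\wV$ is positive, self-adjoint, and has closed range (because $R(\wV)$ is closed), its Cauchy dual coincides with its Moore--Penrose inverse, so $\wV'^{*}\wV' = (\wV^{*}\wV)^{\dagger}$. Expansivity modulo $N(\wV)$ is exactly $\wV^{*}\wV \geq P_{N(\wV)^{\perp}}$ on $E \otimes \mathcal{H}$. Inverting on $N(\wV)^{\perp}$, where $\wV^{*}\wV$ is positive, invertible, and satisfies $\wV^{*}\wV|_{N(\wV)^{\perp}} \geq I$, gives $(\wV^{*}\wV)^{\dagger} \leq P_{N(\wV)^{\perp}} \leq I$. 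Consequently, for every $\zeta \in E \otimes \mathcal{H}$, $\|\wV'\zeta\|^{2} = \langle(\wV^{*}\wV)^{\dagger}\zeta, \zeta\rangle \leq \|\zeta\|^{2}$, so $\wV'$ is a contraction.

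\emph{Main obstacle.} Stage 2 is routine operator-inequality bookkeeping once $\wV'^{*}\wV' = (\wV^{*}\wV)^{\dagger}$ is available. The substantive work is in Stage 1: the interior tensor product norms do not behave multiplicatively (for instance $\|\xi \otimes h\|^{2} = \langle h, \sigma(\langle \xi,\xi\rangle)h\rangle$, not $\|\xi\|^{2}\|h\|^{2}$), so extracting a clean scalar sequence $a_k$ from the iterated concavity inequalities requires careful tracking of the weights coming from $\sigma(\langle \xi, \xi\rangle)$ and $\phi(\langle \xi, \xi\rangle)$. The iteration hypothesis is tailored precisely so that each lift $\zeta_k$ remains in $N(I_{E^{\otimes k}} \otimes \wV)^{\perp}$, keeping the inductive application of concavity within its legal domain.
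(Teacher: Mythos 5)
Your Stage 2 is correct and is essentially the content of \cite[Proposition 4.5]{AHS22} that the paper simply cites: expansivity modulo $N(\wV)$ is the operator inequality $\wV^*\wV\ge P_{N(\wV)^\perp}$, $\wt{V}'^*\wt{V}'=(\wV^*\wV)^{\dagger}$, and inverting on the reducing subspace $N(\wV)^{\perp}$ gives the contraction property. The genuine gap is in Stage 1, and it sits exactly at the point you flag as the ``main obstacle'' without resolving it. Your iteration fixes $\eta\in N(\wV)^{\perp}$ and a single $\xi\in E$ and applies concavity to the elementary-tensor lifts $\xi\ot\eta$, $\xi\ot\xi\ot\wV\eta$, etc. The first application yields
\[
\|V(\xi)\wV\eta\|^2-\|\xi\ot\wV\eta\|^2\ \le\ \|\xi\ot\wV\eta\|^2-\|\xi\ot\eta\|^2 ,
\]
where $\|\xi\ot\wV\eta\|^2=\langle \wV\eta,\sigma(\langle\xi,\xi\rangle)\wV\eta\rangle$ and $\|\xi\ot\eta\|^2=\langle\eta,(\phi(\langle\xi,\xi\rangle)\ot I)\eta\rangle$; the next application produces quantities weighted by a further copy of $\langle\xi,\xi\rangle$. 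These weighted differences do not match up level to level, so they do not form a single scalar sequence $a_k$ with nonincreasing increments, and in particular the claimed bound of the leading increment by $\|\wV\eta\|^2-\|\eta\|^2$ is not established --- unless one can strip the weights $\sigma(\langle\xi,\xi\rangle)$ and $\phi(\langle\xi,\xi\rangle)$, which is precisely the unproved step. So as written the contradiction argument does not close.

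The paper's proof handles this differently, and this is where the real work is: it runs the induction on an \emph{arbitrary} vector $y\in N(I_{E^{\ot(n-1)}}\ot\wV)^{\perp}$, using the concavity inequality (in the operator form of Proposition \ref{X3}, ampliated by $I_{E^{\ot(n-1)}}$) applied at the innermost slot, with the hypothesis $(I_{E^{\ot n}}\ot\wV)N(I_{E^{\ot n}}\ot\wV)^{\perp}\subseteq N(I_{E^{\ot(n-1)}}\ot\wV)^{\perp}$ ensuring that $(I_{E^{\ot n}}\ot\wV)y$ stays in the legal domain at every step. This yields $\|\wV_ny\|^2-\|y\|^2\le n\bigl(\|(I_{E^{\ot(n-1)}}\ot\wV)y\|^2-\|y\|^2\bigr)$, and nonnegativity of $\|\wV_ny\|^2$ gives the uniform lower bound $\|(I_{E^{\ot(n-1)}}\ot\wV)y\|^2\ge\frac{n-1}{n}\|y\|^2$ on the whole subspace. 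Only then does one transfer this bound from $I_{E^{\ot(n-1)}}\ot\wV$ to $\wV$ itself, via the properties of creation operators and nondegeneracy of the left action, and let $n\to\infty$. In other words, the weight problem you acknowledge is confronted once, in a separate transfer lemma applied to a subspace-wide lower bound, not at every level of a telescoping built from one fixed elementary tensor; without that device (or an equivalent one) your Stage 1 does not prove expansivity modulo $N(\wV)$.
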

\begin{proof}
	Using \cite[Lemma 2.2]{HS19}, for a representation $(\sigma,V)$ which is concave modulo ${N}(\wV),$ we get
	\begin{align*} 
		\|\wt{V}_2\zeta\|^2-\|\zeta\|^2 \le 2(\|(I_{E} \ot \wV)\zeta\|^2-\|\zeta\|^2) \quad for \quad \zeta \in {{N}(I_E \ot \wV)^\perp}.
	\end{align*}
	For $n\ge1$ and $y \in {{N}(I_{E^{\ot (n-1)}} \ot \wV)^\perp},$ using the  mathematical induction, we have	\begin{align*}
		\|\wt{V}_ny\|^2-\|y\|^2 \le n(\|(I_{E^{\ot (n-1)}} \ot \wV)y\|^2-\|y\|^2) .
	\end{align*}
	This implies that  $\|y\|^2+n(\|(I_{E^{\otimes (n-1)}} \otimes \wt{V})y\|^2-\|y\|^2) \geq 0.$ 
	Thus $\|(I_{E^{\otimes (n-1)}} \otimes \wt{V})y\|^2\geq \frac{n-1}{n} \|y\|^2. $ From the properties of the creation operators, we obtain 
	$$\|\wt{V}\xi\| \geq \frac{n-1}{n}\|\xi\|~\mbox{for all}~\xi \in {{N}( \wV)^\perp}.$$ As $n \rightarrow \infty$ we get  $\|\xi\|\leq \| \wt{V}\xi\|,$ and hence  $(\sigma,V)$ is expansive modulo ${N}(\wV)$ and $\gamma(\wV)\ge 1.$ So from \cite[Proposition 4.5]{AHS22}, $(\sigma,V')$ is contractive.
\end{proof}
The following theorem is the main result of this section which is a gen-
eralization of \cite[Theorem 4.1]{EMZ19}:
\begin{theorem}\label{X4}
	Let $(\sigma,V)$ be a concave c.b.c. representation modulo ${N}(\wV).$ Then the Cauchy dual $(\sigma,V')$ is hyponormal modulo ${N}(\wV).$
\end{theorem}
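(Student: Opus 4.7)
The plan is to combine the Moore--Penrose reformulation of the Cauchy dual from Proposition~\ref{R1} with the operator form of concavity from Proposition~\ref{X3}, via a single well-chosen substitution. From Proposition~\ref{R1} we have $\wV' = \wV^{*\dagger}$, ${\wV'}^{*} = \wV^\dagger$, and $N(\wV') = N(\wV)$, so the claim is equivalent to
\[
\|(I_E \ot \wV^\dagger)\eta\|^2 \le \|\wV^{*\dagger}\eta\|^2 \quad\text{for every } \eta \in N(\wV)^\perp.
\]
Since $N(\wV)^\perp = R(\wV^*)$ and $\wV^*$ restricts to a bijection of $R(\wV)$ onto $N(\wV)^\perp$, I would parametrize $\eta = \wV^* h$ with $h \in R(\wV)$; the identity $\wV^{*\dagger}\wV^* = (\wV \wV^\dagger)^* = P_{R(\wV)}$ then gives $\wV'\eta = h$, so the target reduces to $\|(I_E \ot \wV^\dagger)\wV^* h\|^2 \le \|h\|^2$ for $h \in R(\wV)$.

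Next I would feed $\zeta := (I_E \ot \wV^\dagger)\wV^* h \in E^{\otimes 2} \ot \mathcal{H}$ into the concavity inequality from Proposition~\ref{X3}. The Moore--Penrose identity $\wV^\dagger \wV \wV^\dagger = \wV^\dagger$ forces $(I_E \ot \wV^\dagger \wV)\zeta = \zeta$, while $\wV \wV^\dagger = P_{R(\wV)}$ yields $(I_E \ot \wV)\zeta = (I_E \ot P_{R(\wV)})\wV^* h$ and $\wV_2 \zeta = \wV(I_E \ot P_{R(\wV)})\wV^* h$. Setting $A := \wV(I_E \ot P_{R(\wV)})\wV^*$, a positive self-adjoint operator on $\mathcal{H}$, Proposition~\ref{X3} rearranges to
\[
\|(I_E \ot \wV^\dagger)\wV^* h\|^2 \le 2\langle A h, h \rangle - \langle A^2 h, h \rangle = \langle (2A - A^2) h, h \rangle.
\]
Hence it would suffice to establish the operator inequality $2A - A^2 \le P_{R(\wV)}$ on $\mathcal{H}$.

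The closing observation is that $A$ annihilates $R(\wV)^\perp$: for $h \in R(\wV)^\perp = N(\wV^*)$ one has $\wV^* h = 0$, so $A h = 0$. Combined with the obvious inclusion $R(A) \subseteq R(\wV)$, this forces $A = P_{R(\wV)} A P_{R(\wV)}$, and since $P_{R(\wV)} - A$ is then self-adjoint,
\[
(P_{R(\wV)} - A)^2 = P_{R(\wV)} - 2A + A^2 \ge 0,
\]
which yields $2A - A^2 \le P_{R(\wV)}$ at once; because $\langle P_{R(\wV)} h, h\rangle = \|h\|^2$ for $h \in R(\wV)$, the hyponormality estimate follows. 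The only genuinely creative step is the choice of $\zeta = (I_E \ot \wV^\dagger)\wV^* h$, engineered so that the extra term $(I_E \ot \wV^\dagger \wV)\zeta$ collapses to $\zeta$; after that, the argument is pure Moore--Penrose bookkeeping combined with the elementary fact $(P_{R(\wV)} - A)^2 \ge 0$.
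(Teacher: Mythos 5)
Your proof is correct, and it takes a genuinely different route from the one in the paper. The paper's proof works from the definition of concavity modulo $N(\wV)$ directly: it first upgrades it (by adding the square $\|(I_{E^{\ot 2}\ot \mathcal{H}}-(I_E\ot \wV^*\wV))\zeta\|^2$ and splitting off $N(I_E\ot\wV)$) to the estimate $\|\wV_2\eta\|\le\|(I_E\ot\wV^*\wV)\eta\|$ valid for \emph{all} $\eta\in E^{\ot 2}\ot\mathcal{H}$, then rewrites $\wV_2=\wV(I_E\ot\wV'\wV^*\wV)$ to conclude that $\wT:=\wV(I_E\ot\wV')$ is contractive (indeed a completely contractive covariant representation of $E^{\ot 2}$), and finally extracts hyponormality from the factorization $\wT^*\wV'\wV^*=(I_E\ot\wV')^*\wV^*$. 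You instead parametrize $N(\wV)^{\perp}=\wV^*(R(\wV))$, substitute the single family of vectors $\zeta=(I_E\ot\wV^{\dagger})\wV^*h$, $h\in R(\wV)$, into the Moore--Penrose form of concavity (Proposition \ref{X3}) — engineered so that $(I_E\ot\wV^{\dagger}\wV)\zeta=\zeta$ — and reduce the claim to the elementary operator inequality $2A-A^2\le P_{R(\wV)}$ for $A=\wV(I_E\ot P_{R(\wV)})\wV^*$, which follows from $(P_{R(\wV)}-A)^2\ge 0$ because $A=P_{R(\wV)}AP_{R(\wV)}$. All the bookkeeping checks out: $\wV'=\wV^{*\dagger}$, $\wV'^{*}=\wV^{\dagger}$, $N(\wV')=N(\wV)$ and $\wV^{*\dagger}\wV^*=P_{R(\wV)}$ are exactly Proposition \ref{R1}, and $\langle Ah,h\rangle$, $\langle A^2h,h\rangle$ are the correct values of $\|(I_E\ot\wV)\zeta\|^2$ and $\|\wV_2\zeta\|^2$. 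In effect you prove that $\wT^*$ is contractive on $R(\wV)$ (your map $h\mapsto(I_E\ot\wV^{\dagger})\wV^*h$ is exactly $\wT^*|_{R(\wV)}$), which is all that hyponormality needs, whereas the paper proves contractivity of $\wT$ on all of $E^{\ot 2}\ot\mathcal{H}$; your argument is accordingly shorter and needs concavity only at the special vectors $\zeta$, while the paper's route yields the additional fact, of independent interest, that $(\sigma,T)$ is a completely contractive covariant representation of $E^{\ot 2}$. One small addition you should make explicit: before invoking $\wV^{\dagger}$ and Proposition \ref{X3} you must know that $R(\wV)$ is closed; this is not a hypothesis of the theorem but is supplied by the proposition, proved just before Proposition \ref{X3}, that every representation concave modulo $N(\wV)$ has closed range.
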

\begin{proof}
	Let $\zeta \in E^{\ot 2}\ot \mathcal{H},$ we have
	\begin{align*}
		&\|\wV_2\zeta\|^2 - \|(I_E \ot \wV^* \wV)\zeta\|^2 \le \|\wV_2\zeta\|^2 - \|(I_E \ot \wV^* \wV)\zeta\|^2 + \\& \quad \|(I_{E^{\ot 2}\ot \mathcal{H}}-(I_E \ot \wV^* \wV))\zeta\|^2 = \|\wV_2\zeta\|^2  + \|\zeta\|^2 - 2\|(I_E \ot \wV)\zeta\|^2.
	\end{align*} 
	For  $\zeta\in {{N}(I_E \ot \wV)^\perp},$ since $(\sigma,V)$ is concave modulo ${N}(\wV),$ we have $\|\wV_2 \zeta\| \le \|(I_E \ot \wV^* \wV)\zeta\|.$
	For every $\eta \in E^{\ot 2}\ot \mathcal{H}={{N}(I_E \ot \wV)}\oplus {{N}(I_E \ot \wV)^\perp},$ there exist $\eta_1\in {N}(I_E \ot \wV)$ and $\zeta\in {{N}(I_E \ot \wV)^\perp}$ such that $\eta=\eta_1 +\zeta.$ It follows that  $$\|\wV_2\eta\|=\|\wV_2\zeta\| \le \|(I_E \ot \wV^* \wV)\zeta\|=\|(I_E \ot \wV^* \wV)\eta\|\quad for \quad \eta \in E^{\ot 2}\ot \mathcal{H}.$$ Since  $R(\wV)$ is closed, $\wV^{\dagger}$ exists. From Proposition \ref{R1}, we have $$\wV(I_E \ot \wV'\wV^*\wV)=\wV(I_E \ot P_{R(\wV)}\wV)=\wV_2,$$ and hence $\|\wV(I_E \ot \wV'\wV^*\wV)\eta\|=\|\wV_2\eta\|\le \|(I_E \ot \wV^* \wV)\eta\|$ for $\eta \in E^{\ot 2}\ot \mathcal{H}.$ For any arbitary $\eta \in E^{\ot 2}\ot \mathcal{H}={{N}(I_E \ot \wV)}\oplus R(I_E\ot \wV^*\wV),$ there exist $\zeta_1\in {N}(I_E \ot \wV)$ and $\zeta_2 \in E^{\ot 2}\ot \mathcal{H}$ such that $\eta=\zeta_1 + (I_E\ot \wV^*\wV)\zeta_2.$ Using ${N}(I_E\ot \wV')={N}(I_E \ot \wV),$ we get $$\|\wV(I_E \ot \wV' )\eta\|=\|\wV(I_E \ot \wV'\wV^*\wV)\zeta_2\| \le \|(I_E \ot \wV^* \wV)\zeta_2\| \le \|\eta\|.$$ Let $\wT=\wV(I_E \ot \wV').$ For $\eta\in E^{\ot 2}\ot \mathcal{H}$ and $b\in \mathcal{B}$ we have \begin{align*}
		\wT(\phi_2(b)\ot I_{\mathcal{H}})\eta&=\wV(\phi(b)\ot I_{\mathcal{H}})(I_E \ot \wV')\eta=\sigma(b)\wT\eta,
	\end{align*} where $\phi_2$ is the left action of $\mathcal{B}$ on $E^{\ot 2}.$ So from Lemma \ref{MSC}, $(\sigma,T)$ is a completely contractive covariant representation of $E^{\ot 2}$ on $\mathcal{H}.$ Using Proposition \ref{R1}, we get $$\wT^*\wV'\wV^*=(I_E \ot \wV')^*\wV^*\wV'\wV^*=(I_E \ot \wV')^*P_{R(\wV^*)}\wV^*=(I_E \ot \wV')^*\wV^*.$$ It gives $\|(I_E \ot \wV')^*\wV^*h\|=\|\wT^*\wV'\wV^*h\| \le \|\wV'\wV^*h\|$ for every $h\in\mathcal{H},$ thus $(\sigma,V')$ is hyponormal modulo ${N}(\wV).$
\end{proof}
The next result is an application of Proposition \ref{X5} and Theorem \ref{X4} which is a generalization of \cite[Theorem 2.9]{SC07}. 
\begin{corollary}\label{Y1}
	Let	$(\sigma,V)$ be a concave c.b.c. representation of $E$ on $\mathcal{H}.$ Then the Cauchy dual $(\sigma,V')$ is hyponormal contractive covariant representation.
\end{corollary}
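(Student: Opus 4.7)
The approach is to recognize Corollary \ref{Y1} as a direct consequence of Theorem \ref{X4} and Proposition \ref{X5}, after verifying that ordinary concavity of $(\sigma,V)$ implies (a) concavity modulo $N(\wV)$, (b) the kernel hypothesis required by Proposition \ref{X5}, and (c) that $N(\wV)=\{0\}$, so both ``modulo $N(\wV)$'' conclusions collapse to the unrestricted ones. Point (a) is immediate: the concavity inequality
\[
\|\wV_2\zeta\|^2+\|\zeta\|^2\le 2\|(I_E\otimes\wV)\zeta\|^2
\]
is postulated on all of $E^{\otimes 2}\otimes\mathcal{H}$, hence in particular on $N(I_E\otimes\wV)^\perp$.

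For (c) and (b) I would use the increasing-differences principle already appearing in the proof of Proposition \ref{X5} (via \cite[Lemma 2.2]{HS19}): iterating the concavity inequality on sequences of the form $a_n=\|\wV_n\zeta\|^2$ yields $\|\wV\eta\|\ge\|\eta\|$ for every $\eta\in E\otimes\mathcal{H}$, i.e.\ $\wV^*\wV\ge I_{E\otimes\mathcal{H}}$. In particular $N(\wV)=\{0\}$. Tensoring this operator inequality with the identity on $E^{\otimes n}$ gives $(I_{E^{\otimes n}}\otimes\wV)^*(I_{E^{\otimes n}}\otimes\wV)=I_{E^{\otimes n}}\otimes(\wV^*\wV)\ge I$, so $I_{E^{\otimes n}}\otimes\wV$ is bounded below and $N(I_{E^{\otimes n}}\otimes\wV)=\{0\}$ for every $n\ge 0$. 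Hence $N(I_{E^{\otimes n}}\otimes\wV)^\perp$ is the whole of $E^{\otimes(n+1)}\otimes\mathcal{H}$, and the containment $(I_{E^{\otimes n}}\otimes\wV)N(I_{E^{\otimes n}}\otimes\wV)^\perp\subseteq N(I_{E^{\otimes(n-1)}}\otimes\wV)^\perp$ reduces to a trivial ambient inclusion.

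Both hypotheses of Proposition \ref{X5} and Theorem \ref{X4} are therefore in force, so $(\sigma,V')$ is contractive and hyponormal modulo $N(\wV)$. From $\wV'=\wV^{*\dagger}$ (Proposition \ref{R1}) together with the Moore-Penrose identity $N(\wV^{*\dagger})=N(\wV^{\dagger *})=N(\wV)$ from the table following it, one has $N(\wV')=N(\wV)=\{0\}$, so hyponormality modulo $N(\wV)$ becomes the unrestricted inequality $\|(I_E\otimes\wV'^{*})\eta\|\le\|\wV'\eta\|$ on all of $E\otimes\mathcal{H}$, i.e.\ genuine hyponormality of $(\sigma,V')$. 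The one step requiring care is the tensor lifting $N(\wV)=\{0\}\Rightarrow N(I_{E^{\otimes n}}\otimes\wV)=\{0\}$: rather than invoking an injectivity principle for Hilbert $C^{*}$-module tensor products (which can be delicate in general), the safe route is the quantitative bound $\wV^*\wV\ge I$, which tensors cleanly with the identity on $E^{\otimes n}$.
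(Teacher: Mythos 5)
Your proposal is correct and takes essentially the same route the paper intends: Corollary \ref{Y1} is stated there without a separate proof, precisely as an application of Proposition \ref{X5} and Theorem \ref{X4}, and your verifications (ordinary concavity restricts to concavity modulo $N(\wV)$; the iterated concavity/difference argument gives $\wV^*\wV\ge I$, so $N(\wV)=\{0\}$, the kernel hypothesis of Proposition \ref{X5} holds trivially, and the ``modulo $N(\wV)$'' conclusions become unrestricted) are exactly the routine steps left implicit. Your quantitative handling of the amplification step, $I_{E^{\ot n}}\ot(\wV^*\wV)\ge I$ rather than a bare injectivity-lifting claim, is a sound and careful way to close that point.
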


\begin{corollary}
	Let $(\sigma,V)$ be a c.b.c. representation of $E$ on $\mathcal{H}$ such that \begin{equation*}
		\|(I_E\ot \wV)\zeta + \eta\|^2 \le 2(\|\zeta\|^2 + \|\wV \eta\|^2), \quad for \quad \zeta \in E^{\ot 2}\ot \mathcal{H}, \quad  \eta \in E\ot \mathcal{H}. 
	\end{equation*}
	Then $(\sigma,V)$ is hyponormal contractive.
\end{corollary}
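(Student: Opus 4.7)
The hypothesis is the covariant-representation analogue of Shimorin's wandering inequality. My plan is to reduce the corollary to Corollary~\ref{Y1} by showing that the Cauchy dual $(\sigma,V')$ is concave, and then apply Corollary~\ref{Y1} to $(\sigma,V')$ together with the involutivity $(V')' = V$ from Proposition~\ref{R1}(2) to conclude. First, setting $\zeta=0$ in the hypothesis yields $\|\eta\|^{2}\leq 2\|\wV\eta\|^{2}$ for all $\eta\in E\otimes\mathcal{H}$, so $\wV$ is bounded below and has closed range; in particular $\wV^{*}\wV$ is invertible and $\wV'=\wV(\wV^{*}\wV)^{-1}$ is well defined. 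Setting $\eta=0$ yields $\|(I_{E}\otimes\wV)\zeta\|^{2}\leq 2\|\zeta\|^{2}$, so $\|\wV\|\leq\sqrt{2}$.

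Next, I would view the hypothesis as the non-positivity of the quadratic form $(\zeta,\eta)\mapsto \|(I_{E}\otimes\wV)\zeta+\eta\|^{2}-2\|\zeta\|^{2}-2\|\wV\eta\|^{2}$, which is equivalent to the block-operator inequality
\[
\begin{pmatrix} 2I-I_{E}\otimes\wV^{*}\wV & -(I_{E}\otimes\wV)^{*} \\ -(I_{E}\otimes\wV) & 2\wV^{*}\wV-I\end{pmatrix}\geq 0
\]
on $(E^{\otimes 2}\otimes\mathcal{H})\oplus(E\otimes\mathcal{H})$. Since $2\wV^{*}\wV-I\geq 0$ by the preparatory step, the Schur complement yields
\[
(I_{E}\otimes\wV^{*})(2\wV^{*}\wV-I)^{-1}(I_{E}\otimes\wV)\leq 2I-I_{E}\otimes\wV^{*}\wV.
\]
Using $\wV'=\wV(\wV^{*}\wV)^{-1}$, the identities $\wV_{2}'^{*}\wV_{2}'=(I_{E}\otimes\wV'^{*})(\wV^{*}\wV)^{-1}(I_{E}\otimes\wV')$ and $\|(I_{E}\otimes\wV')\zeta\|^{2}=\langle(I_{E}\otimes(\wV^{*}\wV)^{-1})\zeta,\zeta\rangle$, I would rearrange the Schur inequality, conjugating by appropriate functional-calculus factors of $\wV^{*}\wV$, into the concavity inequality $\|\wV_{2}'\zeta\|^{2}+\|\zeta\|^{2}\leq 2\|(I_{E}\otimes\wV')\zeta\|^{2}$ for $(\sigma,V')$. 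Applying Corollary~\ref{Y1} to the concave representation $(\sigma,V')$ then gives that its Cauchy dual is hyponormal contractive, and Proposition~\ref{R1}(2) identifies this Cauchy dual with $(\sigma,V)$.

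The main obstacle is the rearrangement of the Schur inequality into concavity of $V'$: the operators $\wV^{*}\wV$ and $I_{E}\otimes\wV$ do not commute in general, so passing from $(2\wV^{*}\wV-I)^{-1}$ to $(\wV^{*}\wV)^{-1}$ in the tensor position requires careful bookkeeping using the covariance identity $\wV(\phi(b)\otimes I_{\mathcal{H}})=\sigma(b)\wV$ and the Moore-Penrose properties of Proposition~\ref{R1}. In the scalar one-dimensional test, both the Schur inequality and the concavity inequality for $V'$ collapse to $(|\lambda|^{2}-1)^{2}\leq 0$, which is consistent and supports the proposed equivalence and, via Corollary~\ref{Y1}, yields the hyponormal contractivity of $(\sigma,V)$.
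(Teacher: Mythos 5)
Your overall skeleton coincides with the paper's: set $\zeta=0$ to see $\wV$ is bounded below (hence left-invertible and $\wV'=\wV(\wV^*\wV)^{-1}$), show that $(\sigma,V')$ is concave, and then apply Corollary \ref{Y1} to $(\sigma,V')$ together with $\wV''=\wV$ from Proposition \ref{R1}. The paper, however, obtains the middle step in one stroke by citing \cite[Theorem 3.13]{HS19}, which states precisely that the Shimorin-type inequality for $\wV$ forces concavity of the Cauchy dual; you instead try to reprove this implication via a block-operator/Schur-complement argument, and that is where your proposal has a genuine gap.

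Concretely, two problems. First, $\zeta=0$ only gives $2\wV^*\wV\ge I$, so $2\wV^*\wV-I\ge 0$ but need not be invertible, and $(2\wV^*\wV-I)^{-1}$ in your Schur complement may not exist (this is repairable by a perturbation or a generalized Schur complement, but it is not addressed). Second, and more seriously, the ``rearrangement'' you defer is exactly the content of the theorem. Writing $P=\wV^*\wV$, your Schur inequality is $(I_E\ot\wV^*)(2P-I)^{-1}(I_E\ot\wV)\le 2I-(I_E\ot P)$, whereas concavity of $(\sigma,V')$, after the congruence by $I_E\ot P$ that your identities suggest, reads $(I_E\ot\wV^*)P^{-1}(I_E\ot\wV)\le 2(I_E\ot P)-(I_E\ot P)^2$. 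Passing from the first to the second is not a conjugation by functional-calculus factors of $P$: the middle factor $(2P-I)^{-1}$ acts on $E\ot\mathcal H$ while the proposed congruences act by $I_E\ot f(P)$ on $E^{\ot 2}\ot\mathcal H$, and these do not interact through any commutation you can invoke (the covariance relation $\wV(\phi(b)\ot I_{\mathcal H})=\sigma(b)\wV$ only concerns the left $\mathcal B$-action, not $P$). Your scalar test is uninformative here because in the commutative case the two inequalities collapse to the same statement, which is precisely where the difficulty disappears. As written, the implication ``hypothesis $\Rightarrow(\sigma,V')$ concave'' is asserted but not proved; to close the argument you should either invoke \cite[Theorem 3.13]{HS19} as the paper does, or reproduce its proof (which works with the vector inequality directly rather than with the Schur-complement form).
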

\begin{proof}
	Suppose that $(\sigma,V)$ satisfies $\|(I_E\ot \wV)\zeta + \eta\|^2 \le 2(\|\zeta\|^2 + \|\wV \eta\|^2)$ for $\zeta \in E^{\ot 2}\ot \mathcal{H}$ and  $\eta \in E\ot \mathcal{H},$ then by \cite[Theorem 3.13]{HS19}, $(\sigma,V')$ is concave. Since $\wV''=\wV,$ we have $(\sigma,V)$ is hyponormal contractive.
\end{proof}


\begin{corollary}
	Let $(\sigma, V)$ be a c.b.c. representation of $E$ on $\mathcal{H}$ with closed range such that 
	\begin{equation}\label{X1}
		\|(I_E\ot \wV)\zeta + (I_E\ot \wV \wV^{\dagger})\wV^{\dagger}\wV \eta\|^2 \le 2(\|\zeta\|^2+\|\wV \eta\|^2)
	\end{equation} for all $\zeta \in E^{\ot 2}\ot \mathcal{H}$ and $\eta\in E\ot \mathcal{H}.$ Then $(\sigma,V)$ is hyponormal modulo ${N}(\wV).$
\end{corollary}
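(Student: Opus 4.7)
The strategy mirrors the preceding corollary but at the level of the ``modulo $N(\wV)$'' version, using Theorem \ref{X4} in place of Corollary \ref{Y1}. First, I reduce the problem to showing that hypothesis (\ref{X1}) forces $(\sigma,V')$ to be concave modulo $N(\wV')$. Recall that Proposition \ref{R1}(2) gives $\wV''=\wV$, and the general Moore-Penrose identity $N(T^\dagger)=N(T^*)$ yields $N(\wV')=N(\wV^{*\dagger})=N(\wV)$; moreover $R(\wV')=R(\wV^{*\dagger})=R(\wV)$ is closed. Hence once $(\sigma,V')$ is known to be concave modulo $N(\wV')$, Theorem \ref{X4} applied to $(\sigma,V')$ gives that $(\sigma,V'')$ is hyponormal modulo $N(\wV')$; since $\wV''=\wV$ and $N(\wV')=N(\wV)$, this is exactly the desired conclusion.

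By Proposition \ref{X3} applied to $(\sigma,V')$, concavity of $(\sigma,V')$ modulo $N(\wV')$ is equivalent to
\[\|\wV'_2\alpha\|^2+\|(I_E\ot (\wV')^\dagger\wV')\alpha\|^2\le 2\|(I_E\ot\wV')\alpha\|^2 \qquad(\alpha\in E^{\ot 2}\ot\mathcal{H}).\]
Using Proposition \ref{R1} one computes $(\wV')^\dagger\wV'=\wV^*\wV^{*\dagger}=P_{N(\wV)^\perp}$, so the inequality I need to derive from (\ref{X1}) reads
\[\|\wV'_2\alpha\|^2+\|(I_E\ot P_{N(\wV)^\perp})\alpha\|^2\le 2\|(I_E\ot\wV')\alpha\|^2.\]

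The main technical step is deriving the second displayed inequality from (\ref{X1}) by a Shimorin-type argument. I would substitute $\eta=(I_E\ot\wV')\alpha$, so that $\wV\eta=\wV(I_E\ot\wV')\alpha=\wV'_2\alpha$ after invoking $\wV\wV^\dagger\wV=\wV$, and then choose $\zeta$ so that $(I_E\ot\wV)\zeta$ absorbs the remaining terms on the left of (\ref{X1}). The factor $(I_E\ot\wV\wV^\dagger)\wV^\dagger\wV$ in (\ref{X1}) is precisely the composition $(I_E\ot P_{R(\wV)})P_{N(\wV)^\perp}$, and it is this combination that makes the cross terms cancel cleanly via Proposition \ref{R1}(6). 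The main obstacle is this cancellation: in the left-invertible setting of the preceding corollary the manipulation collapses via $\wV^\dagger\wV=I$, whereas here the four Moore-Penrose projections $\wV\wV^\dagger$, $\wV^\dagger\wV$, $I_E\ot\wV\wV^\dagger$, and $I_E\ot\wV^\dagger\wV$ must be kept distinct and the appropriate lift of $\wV'_2\alpha\in\mathcal{H}$ to a $\zeta\in E^{\ot 2}\ot\mathcal{H}$ must be chosen so that the Shimorin-style polarization of (\ref{X1}) matches the target inequality term-by-term.
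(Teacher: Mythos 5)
Your overall reduction coincides with the paper's: use Proposition \ref{X3} to show that the hypothesis (\ref{X1}) forces $(\sigma,V')$ to be concave modulo $N(\wV')=N(\wV)$, then apply Theorem \ref{X4} together with $\wV''=\wV$; the identifications $(\wV')^{\dagger}\wV'=\wV^{*}\wV^{*\dagger}=P_{N(\wV)^{\perp}}$, $N(\wV')=N(\wV)$ and $R(\wV')=R(\wV)$ are all correct. The gap is in the one step that actually carries the content, which you only sketch, and the single concrete identity you assert there is false: with $\eta=(I_E\ot\wV')\alpha$ you claim $\wV\eta=\wV(I_E\ot\wV')\alpha=\wV_2'\alpha$, but $\wV_2'=\wV'(I_E\ot\wV')$, and $\wV\neq\wV'$ unless $\wV$ is a partial isometry (Proposition \ref{R1}); invoking $\wV\wV^{\dagger}\wV=\wV$ does not repair this, so the announced term-by-term cancellation never gets off the ground.

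The deeper reason your plan, as stated, cannot work is structural. Set $X(\zeta,h)=(I_E\ot\wV)\zeta+(I_E\ot\wV\wV^{\dagger})\wV^{\dagger}h$ on $(E^{\ot 2}\ot\mathcal{H})\oplus\mathcal{H}$; then (\ref{X1}) says precisely $\|X\|\le\sqrt{2}$, while the inequality you must reach, $\|\wV_2'\alpha\|^{2}+\|(I_E\ot P_{N(\wV)^{\perp}})\alpha\|^{2}\le 2\|(I_E\ot\wV')\alpha\|^{2}$, is exactly $\|X^{*}(I_E\ot\wV')\alpha\|^{2}\le 2\|(I_E\ot\wV')\alpha\|^{2}$ — a statement about the adjoint $X^{*}$, which follows from the norm bound as a whole but not from evaluating (\ref{X1}) at one pair $(\zeta,\eta)$ chosen to "absorb" terms. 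The paper's proof handles this by passing to $XX^{*}\le 2I_{E\ot\mathcal{H}}$ and conjugating by $I_E\ot\wV^{\dagger}$ (whose adjoint is $I_E\ot\wV'$), then simplifying with $\wV^{\dagger}\wV\wV^{*}=\wV^{*}$ and $\wV\wV^{\dagger}\wV'=\wV'$ to land on the inequality of Proposition \ref{X3} for $\wV'$. If you insist on a substitution argument, the workable choice is $\zeta=(I_E\ot\wV^{*}\wV')\alpha$ and $\eta=\wV^{\dagger}\wV_2'\alpha$ (so that $\wV\eta=\wV_2'\alpha$, using $\wV_2'\alpha\in R(\wV')=R(\wV)$), i.e.\ $(\zeta,\wV\eta)=X^{*}(I_E\ot\wV')\alpha$, followed by the Cauchy--Schwarz step $\|X^{*}\xi\|^{2}=\langle XX^{*}\xi,\xi\rangle\le\|X(X^{*}\xi)\|\,\|\xi\|\le\sqrt{2}\,\|X^{*}\xi\|\,\|\xi\|$; but this is the adjoint argument in disguise, not the direct matching you describe, and with your choice $\eta=(I_E\ot\wV')\alpha$ the cross terms do not cancel.
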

\begin{proof}
	Suppose $(\sigma, V)$ satisfies $\|(I_E\ot \wV)\zeta + (I_E\ot \wV \wV^{\dagger})\wV^{\dagger}\wV \eta\|^2 \le 2(\|\zeta\|^2+\|\wV \eta\|^2).$
	Put $h=\wV \eta$ we have \begin{align}\label{RK3}
		\|(I_E\ot \wV)\zeta +(I_E\ot \wV \wV^{\dagger})\wV^{\dagger}h\|^2 \le 2(\|\zeta\|^2+\|h\|^2)
	\end{align} for $\zeta \in E^{\ot 2}\ot \mathcal{H}$ and $h\in R(\wV).$ Define an operator $X: (E^{\ot 2}\ot \mathcal{H}) \oplus  \mathcal{H}\to E\ot \mathcal{H}$ by $X(\zeta,h)=(I_E\ot \wV)\zeta + (I_E\ot \wV \wV^{\dagger})\wV^{\dagger}h$ for all $\zeta \in E^{\ot 2}\ot \mathcal{H}$ and $h\in\mathcal{H}.$ From Equation (\ref{RK3}), it is easy to verify that $\|X\|\le \sqrt{2},$ and hence $XX^*\le 2 I_{E\ot \mathcal{H}},$ which yields
	\begin{equation*}\label{Y8}
		(I_E \ot \wV\wV^*)+ (I_E \ot \wV\wV^{\dagger})\wV^{\dagger}\wV' (I_E \ot \wV\wV^{\dagger}) \le 2 I_{E\ot \mathcal{H}}.
	\end{equation*}
	Pre and post-multiply in the last inequality by $(I_E \ot \wV^{\dagger})$ and  $(I_E \ot \wV'),$ respectively, we obtain
	\begin{equation*}
		(I_E \ot \wV^{\dagger}\wV\wV^*\wV')+ (I_E \ot \wV^{\dagger}\wV\wV^{\dagger})\wV^{\dagger}\wV' (I_E \ot \wV\wV^{\dagger}\wV') \le 2 (I_E \ot \wV^{\dagger}\wV').
	\end{equation*}
	Since $\wV^*\wV'=\wV^{\dagger}\wV=P_{R(\wV^*)}$ and $\wV\wV^{\dagger}\wV'=\wV'$ we get $$(I_E \ot \wV^*\wV')+ (I_E \ot \wV^{\dagger})\wV^{\dagger}\wV_2' \le 2(I_E \ot \wV^{\dagger}\wV').$$ It gives
	$\|\wV_2'\zeta\|^2 -2\|(I_E\ot \wV')\zeta\|^2 +\|(I_E\ot \wV^*\wV')\zeta\|^2\le 0$ for all $\zeta \in E^{\ot 2}\ot \mathcal{H}.$ So by Proposition \ref{X3}, $(\sigma,V')$ is concave modulo ${N}(\wV')={N}(\wV).$ From Theorem \ref{X4}, $(\sigma,V)$ is hyponormal modulo ${N}(\wV).$
\end{proof}

\subsection*{Acknowledgment}
The author is grateful to the reviewer for carefully reading the manuscript and giving valuable suggestions and comments. He want to thank Harsh Trivedi and Shankar Veerabathiran for some fruitful discussions. The author supported by UGC fellowship (File No:16-6(DEC. 2018)/2019(NET/CSIR)) and acknowledge the Centre for Mathematical \& Financial Computing and the DST-FIST grant for the financial support for the computing lab facility under the scheme FIST ( File No: SR/FST/MS-I/2018/24) at the LNMIIT, Jaipur.

\end{document}